\newtheorem{thm}{Theorem}[section]
\newtheorem{lemma}[thm]{Lemma}
\newtheorem{cor}[thm]{Corollary}
\newtheorem{prop}[thm]{Proposition}
\theoremstyle{definition}
\newtheorem{example}[thm]{Example}
\newtheorem{notation}[thm]{Notation}
\newtheorem{remark}[thm]{Remark}
\newtheorem{definition}[thm]{Definition}
\newtheorem{question}[thm]{Question}
\newtheorem{problem}[thm]{Problem}
\numberwithin{equation}{section}
\newcommand{\Spec}{\mathrm{Spec}}
\newcommand{\Proj}{\mathrm{Proj}}
\newcommand{\Hilb}{\mathrm{Hilb}}
\newcommand{\Gr}{\mathbf{Gr}}
\newcommand{\codim}{\mathrm{codim}}
\newcommand{\LM}{\mathrm{LM}}
\newcommand{\LT}{\mathrm{LT}}
\newcommand{\mdeg}{\mathrm{mdeg}}
\newcommand{\GL}{\mathrm{GL}}
\newcommand{\mult}{\mathrm{mult}}
\newcommand{\eps}{\varepsilon}
\renewcommand{\AA}{\mathbb{A}}
\renewcommand{\P}{\mathbb{P}}
\newcommand{\N}{\mathbb{N}}
\newcommand{\V}{\mathrm{V}}
\newcommand{\kk}{{\Bbbk}}
\newcommand{\bfL}{\mathbf{L}}
\newcommand{\bfM}{\mathbf{M}}
\newcommand{\bfJ}{\mathbf{J}}
\newcommand{\bfS}{\mathbf{S}}
\newcommand{\bfC}{\mathbf{C}}
\newcommand{\bfe}{\mathbf{e}}
\newcommand{\bfv}{\mathbf{v}}
\newcommand{\bfu}{\mathbf{u}}
\newcommand{\mfp}{{\mathfrak{p}}}
\newcommand{\mfq}{{\mathfrak{q}}}
\newcommand{\al}{\alpha}
\newcommand{\be}{\beta}
\newcommand{\ga}{\gamma}
\newcommand{\de}{\delta}
\newcommand{\mcH}{{\mathcal{H}}}
\newcommand{\mcR}{{\mathcal{R}}}
\newcommand{\mcF}{{\mathcal{F}}}
\newcommand{\mcI}{{\mathcal{I}}}
\newcommand{\mcV}{{\mathcal{V}}}
\newcommand{\mcW}{{\mathcal{W}}}
\newcommand{\PLU}{{\mathrm{PLU}}}
\newcommand{\LAP}{{\mathrm{LAP}}}
\newcommand{\UEN}{{\mathrm{UEN}}}
\newcommand{\LEN}{{\mathrm{LEN}}}
\begin{document}

\author[R.\,Ramkumar, A. \,Sammartano]{Ritvik~Ramkumar and Alessio~Sammartano}
\address{(Ritvik Ramkumar) Department of Mathematics\\Cornell University\\Ithaca, NY\\USA}
\email{ritvikr@cornell.edu}
\address{(Alessio Sammartano) Dipartimento di Matematica \\ Politecnico di Milano \\ Milan \\ Italy}
\email{alessio.sammartano@polimi.it}

\title{On Rees algebras of 2-determinantal ideals}

\subjclass[2020]{Primary: 
13A30, 13C40;
Secondary: 05E40, 13D10, 13F55,  13H10, 13P10, 14C05}

\keywords{
blowup algebras;
special fiber ring;
Kronecker-Weierstrass normal form;
Hilbert scheme;
Koszul algebra;
Cohen-Macaulay ring;
cohomologically full ring;
squarefree Gr\"obner degeneration;
Stanley-Reisner correspondence.
}

\begin{abstract} 
Let $I$ be the ideal of minors of a $2 \times n$ matrix of linear forms with the expected codimension.
In this paper we prove that the Rees algebra of $I$ and its special fiber ring are Cohen-Macaulay and Koszul; 
in particular, they are quadratic algebras.
The main novelty in our approach is the analysis of a stratification of the Hilbert scheme of determinantal ideals. 
We study  degenerations of Rees algebras along this stratification, and combine it with certain squarefree Gr\"obner degenerations.
\end{abstract}

\maketitle

\section{Introduction}

Let $\bfM$ be a sufficiently general matrix of linear forms, 
and let $I$ be its ideal of maximal minors. 
The study of blowup algebras associated to $I$, in particular the Rees algebra $\mcR(I)= \bigoplus_{k=0}^\infty I^k$, is a central subject in commutative algebra. 
For instance, it appears in the study of rational maps, special varieties, integral dependence, multiplicities, syzygies, and singularities of plane curves;
see \cite{Polini} for an overview.
In all these instances,
one is naturally led to study the defining relations of $\mcR(I)$. 
In doing so, some of the most common problems involve computing the degrees of the defining relations, measuring the singularities of $\mcR(I)$,
and investigating the Koszulness of  $\mcR(I)$. 
These avenues of research are very active and we mention some of the more recent papers
\cite{ALL,BignaletCazalet,BrunsConcaVarbaro,CDFGLPS,CooperPrice,DMN,
Lan,LinShen,LinShen2,Sammartano}.

In this work, we provide a detailed picture for the case of matrices  with two rows.
Many interesting geometric objects are defined by minors of such matrices,  
for instance rational normal scrolls \cite{EisenbudHarris}, 
2-regular algebraic sets, small schemes \cite{EGHP},
and eigenschemes of square matrices  \cite{AEKP}.
Moreover, the rational maps associated with some of these objects exhibit rich geometry as seen in the work \cite{RussoSimis}.
For simplicity, 
we say that an ideal $I$ is \emph{2-determinantal} if it is generated by the $2\times 2$ minors of a $2\times(c+1)$ matrix of linear forms, where  $c=\codim(I)$.
Our main result is the following:

\begin{thm}\label{TheoremMain}
Let $I$ be a 2-determinantal ideal.
The Rees algebra $\mcR(I)$ and the special fiber ring $\mcF(I)$ are Cohen-Macaulay Koszul algebras.
In particular, they are defined by quadratic relations.
\end{thm}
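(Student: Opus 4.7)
The plan is to reduce the theorem, in two stages, to a Stanley-Reisner computation on an explicit simplicial complex $\Delta$. The first stage is a degeneration within the Hilbert scheme of 2-determinantal ideals. The Kronecker-Weierstrass normal form classifies $2\times(c+1)$ pencils of linear forms up to row, column, and coordinate changes, organizing the locus of 2-determinantal ideals of codimension $c$ into finitely many combinatorial strata. I would single out a distinguished, maximally degenerate stratum supporting a specific matrix $\bfM_0$—the natural candidate being a balanced catalecticant/scroll-type matrix—whose closure contains the class of an arbitrary $I = I_2(\bfM)$. The key assertion to establish is that the degeneration of ideals $I \rightsquigarrow I_0 := I_2(\bfM_0)$ lifts to a flat $\kk[t]$-family of Rees algebras $\mcR(I) \rightsquigarrow \mcR(I_0)$, and similarly for $\mcF$. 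Given this, Cohen-Macaulayness transfers by upper semicontinuity of depth, Koszulness transfers through the cohomologically full framework (designed precisely to propagate such homological invariants along specializations), and quadraticity of the defining relations is likewise preserved; it therefore suffices to prove the theorem for the single ideal $I_0$.

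The second stage is a squarefree Gr\"obner degeneration of $\mcR(I_0)$. I would present $\mcR(I_0)$ explicitly as a quotient $\kk[x_1,\ldots,x_N,T_1,\ldots,T_{c+2}]/J$ with $J$ generated in bidegrees $(1,1)$ and $(0,2)$, exploit the symmetry of $\bfM_0$ to guess an adapted monomial order, and aim to write down a quadratic Gr\"obner basis of $J$ whose leading terms form a squarefree monomial ideal. By the Stanley-Reisner correspondence, $\mathrm{in}(J)$ then defines a simplicial complex $\Delta$; the quadraticity of $\mathrm{in}(J)$ already yields Koszulness of $\mcR(I_0)$, while shellability (or vertex-decomposability) of $\Delta$ would give Cohen-Macaulayness of the Stanley-Reisner ring $\kk[\Delta]$, hence of $\mcR(I_0)$. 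The corresponding statements for $\mcF(I_0)$ follow by restricting to the subring generated by the $T_i$ and rerunning the same combinatorial analysis on the induced subcomplex.

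The main obstacle, in my view, is the flatness and descent step. A flat degeneration of ideals $I_t \rightsquigarrow I_0$ does not automatically produce a flat degeneration of Rees algebras: the bigraded Hilbert function of $\mcR$ can jump, so the powers $I_t^k$ may fail to deform flatly even when $I_t$ does. Showing that the Kronecker-Weierstrass stratification of the Hilbert scheme produces well-behaved degenerations \emph{at the Rees algebra level}—presumably by verifying that $\mcR(I_0)$ is cohomologically full, or by arranging a constant bigraded Hilbert function along each stratum-closure arc—is where I expect most of the genuinely new work to lie. The combinatorial analysis of $\Delta$ in the second stage, by comparison, should be a concrete if laborious computation once the right target $\bfM_0$ and monomial order have been identified.
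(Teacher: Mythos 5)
Your two-stage plan --- degenerate within the Hilbert scheme to a single most-special ideal, then give its blowup algebras a squarefree quadratic Gr\"obner degeneration --- is exactly the architecture of the paper's proof, and you correctly isolate the real difficulty (flatness at the level of Rees algebras) together with one of the actual fixes (constancy of the bigraded Hilbert function along the family, which here comes from the fact that the Betti numbers of all powers $I^k$ depend only on the discrete invariants). However, there are two concrete problems with the plan as stated. First, your candidate for the target $\bfM_0$ is upside down: a \emph{balanced} scroll-type matrix is the most \emph{generic} member of the component, not the most degenerate one. By Harris's theorem, more balanced scrolls degenerate to less balanced ones, and the terminal object of the stratification is not a scroll at all: it has Kronecker-Weierstrass type $(1^d;\,c-d+1)$, i.e.\ $d$ scroll blocks of size one together with a single large Jordan block, defining a non-reduced scheme that is a Segre variety linearly joined to a multiple linear space. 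If you try to degenerate an arbitrary $2$-determinantal ideal to the balanced scroll, the degeneration simply does not exist (that stratum is dense), so the reduction step collapses. The correct target also forces you to handle non-prime, non-reduced ideals in the Gr\"obner stage, which changes the combinatorics of $\Delta$.

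Second, your route to Cohen-Macaulayness --- shellability or vertex-decomposability of the initial complex $\Delta$ of $\mcR(I_0)$ --- is an unproven assertion and is precisely the laborious computation the paper is designed to avoid. The paper never establishes shellability of $\Delta_\mcR$; instead it runs the Conca--Varbaro mechanism in the opposite direction. One knows from Conca--Herzog--Valla that the blowup algebras of the \emph{balanced} scroll (the generic member) are Cohen-Macaulay; one has a chain of one-parameter degenerations from that scroll all the way down to the Stanley-Reisner ring of $\Delta$; and since Stanley-Reisner rings are cohomologically full, Cohen-Macaulayness of the most generic member propagates \emph{down} the entire chain, forcing every intermediate ring (including $\mcR(I)$ for your arbitrary $I$, and $\kk[\Delta]$ itself) to be Cohen-Macaulay. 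Your direction of transfer ("$\kk[\Delta]$ CM implies everything above it is CM") is valid but requires you to first prove $\kk[\Delta]$ is CM by hand, which for the Rees complex is genuinely hard; the paper's descent argument gets it for free. A related small conflation: Koszulness transfers along one-parameter degenerations by semicontinuity of graded deviations/Tor, not via the cohomologically full framework --- the latter is only needed for the Cohen-Macaulay descent. Finally, note that the case $d=0$ (no scroll blocks) has no rational normal scroll to anchor the descent and needs a separate argument (in the paper, a link computation relating the $d=0$ complex to the $d=1$ complex), and the nilpotent type $(x_1,\dots,x_c)^2$ must be treated separately as well.
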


The theorem builds on and  generalizes several previous results. 
When $I$ defines a balanced rational normal scroll,
it was proved in 
\cite{CHV} that $\mcR(I)$ and  $\mcF(I)$ are Koszul and Cohen-Macaulay (in fact, that they have rational singularities).
The results of  \cite{BrunsConcaVarbaro} show  that the powers of any 2-determinantal ideal $I$ have a linear free resolution, a property that is closely related to the Koszulness of the Rees algebra. 
Moreover, they show that $I$ is an ideal of fiber type, 
i.e., the defining equations of $\mcR(I)$ consist of those of $\mcF(I)$ and the first syzgies of $I$.
When $I$ defines an arbitrary rational normal scroll, a part of the theorem was established in the works of \cite{Sammartano} and \cite{LinShen}. 
In \cite{Sammartano}, the author proves the Koszulness of $\mcR(I)$ and $\mcF(I)$ by explicitly constructing Gr\"obner bases for their defining ideals. 
In \cite{LinShen}, the authors prove that $\mcF(I)$ is Cohen-Macaulay by showing that the simplicial complex determined by 
 the Gr\"obner basis of 
 \cite{Sammartano} is shellable.

The proof of Theorem \ref{TheoremMain} relies on a variety of techniques and, unlike the previously established results, has relatively little case analysis. 
The central idea is to seek convenient degenerations of  Rees algebras of arbitrary 2-determinantal ideals that are algebraically simpler.
This naturally leads one to examine stratifications of the Hilbert scheme of 2-determinantal ideals.
The reason we believed that such an approach would be fruitful is due to a theorem of  Harris \cite{Harris},
which completely characterizes  degenerations among   scrolls by means of their integer partitions.
Notably, Harris shows that, for each dimension and codimension, 
there exists a unique scroll,
corresponding to the least balanced partition,
such that every other scroll degenerates to it.
Using the theory of Kronecker-Weierstrass normal forms, 
we investigate the analogous problem for the broader class of 
2-determinantal ideals.
For each dimension and codimension, we identify a distinguished 2-determinantal ideal $L$
with the property that every  2-determinantal  (non-cone)
ideal  degenerates to it, see Theorem \ref{TheoremUniqueMinimumHcd} for a more precise statement. 
The scheme defined by $L$ is a union of two components, 
a smooth Segre variety and a non-reduced 2-regular scheme supported on a linear space, 
that are linearly joined along a codimension one linear space, in the sense of \cite{EGHP}.

This  fact, combined with results of \cite{BrunsConcaVarbaro},
reduces the proof of Theorem \ref{TheoremMain}  to the study of the blowup algebras of the single ideal $L$,
for each dimension and codimension. To conclude, we employ the combinatorial techniques developed in \cite{Sammartano} for the case of  scrolls to 
prove that both $\mcF(L)$ and $\mcR(L)$ have a squarefree Gr\"obner basis of quadrics  (Theorems \ref{TheoremSpecialFiber} and \ref{TheoremReesAlgebraGB}).
Although the proof outline is similar, 
the combinatorial analysis is substantially trimmer,
chiefly because we only deal with one explicit ideal as opposed to all possible scroll partitions in \cite{Sammartano},
and,
thus, we obtain a  shorter proof even for  scrolls.

A  more significant consequence of our analysis of degenerations concerns the Cohen-Macaulayness of the blowup algebras.
This property was known to hold for both blowup algebras in the case of balanced scrolls \cite{CHV} via Sagbi bases,
and for the special fiber ring of arbitrary scrolls \cite{LinShen} via a detailed analysis of the initial complex. 
Combining our results  with the  theorem of Conca-Varbaro \cite{ConcaVarbaro},
we deduce effortlessly the Cohen-Macaulayness for both blowup algebras and for all 2-determinantal ideals.

\section{Preliminaries} \label{SectionPreliminaries}

Throughout this work, $\kk$ denotes a field.
In some sections, we assume that $\kk$ is algebraically closed, but the main theorem will be proved for arbitrary fields.
We use the symbol  $[\cdot]_d$ to denote a graded component of degree $d$.

\subsection{Determinantal ideals}

Let $S=\mathrm{Sym}(\kk^{n+1})$ be a  polynomial ring and  $\P^n = \Proj \, S$ its projective space.
Given a matrix $\bfM$, we denote by $I_t(\bfM)$ the ideal generated by all the $t\times t$ minors of $\bfM$.
We fix a  terminology for the ideals that are the main subject of this paper:

\begin{definition}
An ideal $I \subseteq S$ is called  {\bf 2-determinantal ideal} if there exists a $2 \times (c+1)$ matrix $\bfM$ of linear forms such that $I = I_2(\bfM)$ and  
$\codim(I)=c$.
\end{definition}

Analogously,
the closed subscheme  $\V(I) \subseteq \P^n$  defined by a 2-determinantal ideal $I\subseteq S$ is called a  2-determinantal scheme.
These objects enjoy many well-known properties. 
The condition that $\codim(I)=c$ means that $I$ has the generic codimension with respect to the Eagon-Northcott bound, and this is equivalent to the fact that the Eagon-Northcott complex associated to $\bfM$ is a (minimal, linear) free resolution of $S/I$.
It follows that  $S/I$ is a Cohen-Macaulay ring of (minimal) multiplicity $c+1$,  and,
if $I$ is prime and $\kk=\overline{\kk}$, 
then $\V(I)$ is a variety of minimal degree.
See \cite{BrunsVetter} for details.

Now, assume that $\kk  =\overline{\kk}$.
For fixed integers $c$ and $n$, all 2-determinantal schemes have numerically the same free resolution. Hence, they have the same Hilbert function and the same Hilbert polynomial $p(\zeta)$, and, as a consequence, they are parametrized by the same Hilbert scheme $\Hilb^{p(\zeta)}(\P^n)$.
We point out that there is a vast literature on Hilbert schemes of determinantal schemes, in a much more general setting, see e.g. \cite{KleppeMiroRoig},
but we will only be concerned with the 2-determinantal case.

\subsection{Blowup algebras}
Let $I= I_2(\bfM)=(\delta_1, \ldots, \delta_m) \subseteq S$
be a 2-determinantal ideal, where the $\delta_i$ are the $2 \times 2$ minors of $\bfM$.
The {\bf Rees algebra} of $I$ is the ring $\mcR(I)=S[\delta_1 \tau, \ldots, \delta_m \tau] \subseteq S[\tau]$ where $\tau$ is a variable.
It is a  standard bigraded ring by setting $\deg(x_i) = (1,0), \deg(\tau)=(-2,1)$, so that $\deg(\delta_i \tau)=(0,1)$.
The {\bf special fiber ring} of $I$ is the subring $\mcF(I)\subseteq \mcR(I)$ concentrated in bidegrees $(0,\ast)$,
and it can also be identified with the subring
$\kk[\delta_1 , \ldots, \delta_m ] \subseteq S$.
The name ``special fiber'' comes from the fact that $\mcF(I) \cong \mcR(I) \otimes_S \kk$.
Introducing variables $T_1, \ldots, T_m$, we have a commutative diagram 
\begin{center}
\begin{tikzcd}
P_\mcR = S[T_1,\ldots, T_m]  \arrow{r}{\pi_\mcR} \arrow[swap]{d}{\rho'} & \,\,\mcR(I) \arrow[swap]{d}{\rho}  \\
P_\mcF = \kk[T_1,\ldots, T_m]  \arrow{r}{\pi_\mcF} \arrow[xshift=0.3cm,swap]{u}{\iota'} & \,\, \mcF(I) \arrow[xshift=0.3cm,swap]{u}{\iota}
\end{tikzcd}
\end{center}
where $\rho, \rho'$ are the algebra retractions defined by projecting onto bidegrees $(0,\ast)$,
$\iota, \iota'$ are the natural inclusions, and $\pi_\mcR, \pi_\mcF$ are  the (bi)graded surjective maps defined by $\pi_\mcR(T_i)= \delta_i \tau$.
The ideals $\ker(\pi_\mcR)$ and $\ker(\pi_\mcF)$ are called the {\bf defining ideals} of $\mcR(I)$ and $\mcF(I)$, respectively.
The ideal $I$ is said to be of {\bf fiber type} if $\ker(\pi_\mcR)$ is generated in bidegrees $(\ast, 1)$ and $(0, \ast)$, equivalently, by the polynomials arising from the first syzygies of $I$ and by $\ker(\pi_\mcF)$.

We refer to \cite{Polini} for an overview on defining ideals of blowup algebras.

\subsection{Degenerations and singularities}
In this subsection, we assume  $\kk=\overline{\kk}$.
We discuss the kind of flat families we will be primarily interested in.
Let $B$ be the localization $\kk[t]_f$ at some  polynomial $f\in \kk[t]$ such that $f(0) \ne 0$,
that is, 
$\Spec(B) \subseteq \AA^1$ is an  open subset containing $0$.
We define a {\bf one-parameter family  of graded  algebras} to be an  $\N$-graded $B$-algebra $A$ 
such that $[A]_0 = B$ and $A$ is generated by $[A]_1$.
For $\alpha \in \Bbbk$, we denote by $A_\alpha = A/(t-\alpha)$ the corresponding member  of the family. 

\begin{lemma}\label{LemmaFlatnessHilbertFunction}
A one-parameter family $A$ of graded  algebras over $B$  is  flat if and only if all the members $A_\alpha$ have the same Hilbert function. 
\end{lemma}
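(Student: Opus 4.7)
The plan is to reduce flatness to a degree-by-degree statement and then exploit the fact that $B$ is a principal ideal domain.

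First I would decompose $A = \bigoplus_{d \geq 0} [A]_d$ as a $B$-module. Since flatness is both preserved by and detected through arbitrary direct sums, $A$ is $B$-flat if and only if every graded component $[A]_d$ is $B$-flat. Because $A$ is generated over $B$ in degree $1$, each $[A]_d$ is a $B$-linear image of a tensor power of $[A]_1$; in particular, as soon as $[A]_1$ is finitely generated over $B$, so is every $[A]_d$. Finite generation of $[A]_1$ is implicit in the one-parameter family setup, since otherwise the Hilbert functions of the fibers would not even be defined.

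Next I would apply the structure theorem. The ring $B = \kk[t]_f$ is a localization of the PID $\kk[t]$, hence itself a PID, and for a finitely generated module over a PID flatness, torsion-freeness, and freeness are all equivalent. Invoking the structure theorem, one writes
\[
[A]_d \;\cong\; B^{r_d} \;\oplus\; \bigoplus_{j} B/(g_{d,j})
\]
with $g_{d,j}$ nonzero non-units, so flatness of $[A]_d$ is equivalent to the vanishing of its torsion, i.e.\ to the absence of all the summands $B/(g_{d,j})$.

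To link this with Hilbert functions, I would then tensor with $B/(t - \alpha) \cong \kk$ and use
\[
\dim_\kk [A_\alpha]_d \;=\; r_d \;+\; \sum_{j} \dim_\kk B/\bigl(g_{d,j},\, t - \alpha\bigr),
\]
observing that the $j$-th term on the right vanishes when $g_{d,j}(\alpha) \neq 0$ and is strictly positive at each root of $g_{d,j}$ inside $\Spec(B)$. Because $\kk = \overline{\kk}$ and any nonzero non-unit of $B$ admits some root in $\Spec(B)$, the Hilbert function $d \mapsto \dim_\kk [A_\alpha]_d$ is constant in $\alpha$ if and only if no torsion summand appears in any degree, and by the previous paragraph this is in turn equivalent to flatness of $A$ over $B$.

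I do not foresee a genuine conceptual obstacle: the argument is essentially a mechanical combination of graded decomposition with the PID structure theorem. The only point that requires care is the finite generation of each $[A]_d$, which should be isolated at the start of the proof; with that in hand, the equivalence follows at once from the computation above.
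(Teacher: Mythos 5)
Your argument is correct. The paper itself gives no proof here beyond a citation to \cite[Exercise 20.14]{Eisenbud}, and your write-up is precisely the standard argument behind that exercise: decompose $A$ into its graded pieces, note that flatness over $B$ is equivalent to flatness of each $[A]_d$, and use that $B=\kk[t]_f$ is a PID so that a finitely generated $B$-module is flat if and only if it is free, which is detected by constancy of the fiber dimension. The two points that genuinely need care are both handled: (i) finite generation of each $[A]_d$, which you rightly isolate as part of the setup (in every application in the paper $A$ is a quotient of a polynomial ring over $B$ by a homogeneous ideal, so this is automatic), and (ii) the fact that every nonzero non-unit $g$ of $B=\kk[t]_f$ has a root in $\Spec(B)$ — this uses $\kk=\overline{\kk}$ (the blanket hypothesis of that subsection) together with the observation that the units of $B$ are exactly the elements whose irreducible factors all divide $f$, so a non-unit must vanish at some $\alpha$ with $f(\alpha)\ne 0$; this is exactly what forces a torsion summand to produce a jump of the Hilbert function at an admissible parameter value. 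One could quibble that for the converse direction you also need infinitely many $\kk$-points in $\Spec(B)$ to find an $\alpha$ avoiding the finitely many roots of the $g_{d,j}$, but that again follows from $\kk=\overline{\kk}$. No gap.
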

\begin{proof}
This can be found for example in \cite[Exercise 20.14]{Eisenbud}.
\end{proof}

Given  standard graded $\kk$-algebras $R, R'$ we say that $R$ {\bf degenerates} to $R'$ 
if there exists a flat one-parameter family $A$ such that $A_0 \cong R'$ and $A_\alpha \cong R$ for  $\alpha \ne 0$. 
By abuse of terminology, given  homogeneous ideals $I,I'\subseteq S$, we say that $I$  degenerates to $I'$ if 
$S/I$ degenerates to $S/I'$. 
A notable special case is the degeneration arising from a Gr\"obner basis \cite[Section 15.8]{Eisenbud}.
However,
it is often fruitful to seek degenerations in the  generality described above;
in fact, this is one of the key aspects of this paper.

Several singularities of  algebras behave well in flat families;
we list the ones that are relevant to our main results.
A standard graded $\kk$-algebra $R$ is called {\bf Koszul} if $[\mathrm{Tor}^R_i(\kk,\kk)]_j = 0$ whenever $i \ne j$,
that is, if the field $\kk$ has a linear free $R$-resolution.
Koszul algebras are defined by quadrics,  whereas algebras defined by Gr\"obner bases of quadrics are Koszul.
See \cite{CDR} for more details on Koszul algebras.

A Noetherian local ring $(R, \mathfrak{m})$ is called {\bf cohomologically full} if,
 for every local ring $(T,\mathfrak{n})$  such that $\mathrm{char}(R)= \mathrm{char}(T)$ and 
 $\mathrm{char}(R/\mathfrak{m})= \mathrm{char}(T/\mathfrak{n})$,
 and every surjection $T \to R$ such that the induced map $T/\sqrt{0}\to R/\sqrt{0}$ is an isomorphism, 
 the induced maps on local cohmology $H^\bullet_\mathfrak{n}(T) \to H^\bullet_\mathfrak{m}(R)$ are surjective.
A standard graded $\kk$-algebra $R$ is called cohomologically full if its localization at the homogeneous maximal ideal is cohomologically full.
See \cite{DDM} for details on cohomologically full rings.

\begin{lemma}\label{LemmaSingularitiesDegeneration}
Let $R,R'$ be standard graded $\kk$-algebras such that 
$R$ degenerates to $R'$.
\begin{enumerate}
\item If $R'$ is Cohen-Macaulay, then $R$ is Cohen-Macaulay;
\item If $R'$ is Koszul, then $R$ is Koszul;
\item If $R'$ is cohomologically full, then $R$ is cohomologically full.
\end{enumerate}
\end{lemma}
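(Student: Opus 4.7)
The plan is to derive each of (1), (2), (3) from a common principle: the property under consideration is preserved under specialization in a flat family $A$ over $B$. More precisely, the locus
\[
\{\alpha \in \Spec(B) : A_\alpha \text{ has the property}\}
\]
is open in $\Spec(B)$. Since this open locus contains $\alpha = 0$ by the hypothesis on $R' \cong A_0$, it contains some $\alpha \ne 0$; for any such $\alpha$ the isomorphism $A_\alpha \cong R$ forces $R$ to inherit the property.

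For (1), I would invoke the classical openness of the Cohen--Macaulay locus in a flat, finite-type morphism of Noetherian schemes (EGA IV.12.1.6) applied to the structure morphism $\Spec(A) \to \Spec(B)$. For (2), the relevant semicontinuity statement is that for each fixed pair $(i,j)$ the function
\[
\alpha \;\longmapsto\; \dim_{\kk}\bigl[\mathrm{Tor}^{A_\alpha}_i(\kk,\kk)\bigr]_j
\]
is upper semicontinuous on $\Spec(B)$. Hence for every $(i,j)$ with $i \ne j$, the hypothesis that $R'$ is Koszul makes this function vanish at $0$, so it vanishes on an open neighborhood $U_{i,j}$ of $0$; picking any $\alpha \in U_{i,j} \setminus \{0\}$ (which exists since $\kk$ is infinite and every $\alpha \ne 0$ gives $A_\alpha \cong R$) yields the vanishing of the corresponding Betti number for $R$. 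Doing this separately for each such $(i,j)$ pair yields Koszulness of $R$. The underlying semicontinuity itself can be proved by lifting a minimal graded free resolution of $\kk$ over $A_0$ to a complex of graded free $A$-modules of the same ranks (using $B$-flatness and the standard graded structure), whose specialization to $\alpha$ is a not-necessarily-minimal graded free resolution of $\kk$ over $A_\alpha$, thereby bounding the Betti numbers from above.

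For (3), the analogous openness of the cohomologically full locus in a flat family of this shape is one of the central preservation results established in \cite{DDM}; I would quote it directly.

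The main conceptual issue arises in (2), where the Koszul property is a conjunction of \emph{infinitely many} vanishings indexed by pairs $(i,j)$ with $i \ne j$, so one cannot na\"ively intersect the neighborhoods $U_{i,j}$ to obtain a single open set on which every fiber is Koszul. The subtle but crucial observation is that this is unnecessary: since every $\alpha \ne 0$ gives the same algebra $R$, it suffices to verify each Betti vanishing \emph{individually}, and for this the pointwise openness $U_{i,j}$ is enough. Beyond this, part (1) is classical and part (3) is a direct invocation of external machinery, so the write-up is essentially a careful citation.
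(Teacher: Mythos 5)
Your overall strategy --- reduce each item to the claim that the relevant locus in $\Spec(B)$ is open and contains $0$, hence contains some $\alpha\neq 0$ where the fiber is $R$ --- is sound, and for item (2) your argument is essentially the standard one that the paper invokes by citing \cite{Caviglia}: lift the minimal graded free resolution of $\kk$ over $A_0$ to a complex of graded free $A$-modules, observe (via flatness of $A$ over $B$ and base change along $B\to\kk(\alpha)$) that its specialization at every $\alpha$ is a possibly non-minimal resolution of $\kk$ over $A_\alpha$, and conclude $\beta^{A_\alpha}_{i,j}(\kk)\le\beta^{A_0}_{i,j}(\kk)$. Your observation that the infinitely many vanishing conditions may be checked one pair $(i,j)$ at a time, because all fibers over $\alpha\ne 0$ are isomorphic to the single algebra $R$, is exactly the right way to dispose of the only real subtlety there.

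Two points need repair. In (1), EGA IV.12.1.6 gives openness of the Cohen--Macaulay locus in the \emph{source} $\Spec(A)$, not in the base; since $\Spec(A)\to\Spec(B)$ is not proper, openness of $\{\alpha : A_\alpha \text{ is Cohen--Macaulay}\}$ in $\Spec(B)$ does not follow directly. The fix is to use that a standard graded $\kk$-algebra is Cohen--Macaulay if and only if it is so at its irrelevant maximal ideal, and to pull the EGA-open set back along the section of $\Spec(A)\to\Spec(B)$ defined by the irrelevant ideal $A_+$; alternatively, argue in the more customary way via a presentation $A=B[x_0,\dots,x_n]/\mathcal{I}$, semicontinuity of projective dimension over the polynomial ring, Auslander--Buchsbaum, and constancy of the Hilbert function (Lemma \ref{LemmaFlatnessHilbertFunction}) to compare dimensions. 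More seriously, in (3) the result you propose to ``quote directly'' from \cite{DDM} does not exist in that form: what \cite{DDM} proves is that cohomological fullness \emph{deforms}, i.e.\ it passes from the special fiber $A/tA$ to (a localization of) the total space $A$ along the nonzerodivisor $t$; this is not an openness statement about the fibers, and passing from the total space to a general fiber $A_\alpha=A/(t-\alpha)$ is not automatic, since quotients by nonzerodivisors do not preserve cohomological fullness in general. The missing specialization step is precisely what \cite[Proposition 3.3]{ConcaVarbaro} supplies in the Gr\"obner case and what the paper asserts carries over verbatim to one-parameter degenerations; as written, your item (3) has a genuine gap at that step.
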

\begin{proof}
Item (1) is well-known. 
Item (2) is well-known in the case of Gr\"obner degenerations, see e.g. \cite[Lemma 2.1]{Caviglia},
but the argument works for any one-parameter degeneration in the above sense.
Similarly, item (3) is proved in \cite[Proposition 3.3]{ConcaVarbaro} in the case of square-free Gr\"obner degenerations, 
but the argument works for any one-parameter degeneration  and any cohomologically full ring $R'$.
\end{proof}

By \cite{ConcaVarbaro}, the converse of item (1) in Lemma \ref{LemmaSingularitiesDegeneration} holds when $R'$ is a Stanley-Reisner ring. 
More generally, we have:
 
\begin{lemma}\label{LemmaCohomologicallyFullDegenerations}
Let $R,R'$ be standard graded $\kk$-algebras such that 
$R$ degenerates to $R'$.
If $R'$ is cohomologically full and $R$ is Cohen-Macaulay, 
then $R'$ is Cohen-Macaulay.
\end{lemma}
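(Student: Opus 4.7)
The plan is to express Cohen-Macaulayness of $R'$ as a vanishing statement for the local cohomology of the total space $A$ of the flat family, and then use cohomological fullness to close a gap that the CM hypothesis on $R$ alone cannot close. I fix notation: let $A$ be the flat $B$-algebra realizing the degeneration, so $A_\alpha \cong R$ for $\alpha \ne 0$ and $A/tA \cong R'$; both rings have the same dimension $d$ by Lemma~\ref{LemmaFlatnessHilbertFunction}. Writing $\mathfrak{m}$ for the irrelevant ideal, the goal is to prove $H^i_\mathfrak{m}(R') = 0$ for all $i < d$.

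First, I would control $H^*_\mathfrak{m}(A)$ using the Cohen-Macaulayness of $R$. Since $A_\alpha \cong R$ is CM for every $\alpha \ne 0$, the long exact sequence coming from $0 \to A \xrightarrow{t-\alpha} A \to A_\alpha \to 0$ forces multiplication by $t-\alpha$ to act as an isomorphism on $H^i_\mathfrak{m}(A)$ for every $i < d$. Each graded piece $[H^i_\mathfrak{m}(A)]_j$ is a finitely generated $B$-module (by Serre's finiteness for the projective family over $\Spec B$), and the structure theorem over the PID $B$ then forces every such component to be $(t)$-primary.

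Next, I would bring in cohomological fullness through the thickenings $T_N := A/(t^N A)$. Since $t$ is nilpotent in $T_N$, the quotient $T_N \twoheadrightarrow R'$ is an isomorphism on reductions, so cohomological fullness yields surjections $H^i_\mathfrak{m}(T_N) \twoheadrightarrow H^i_\mathfrak{m}(R')$ for every $i$ and $N$. The short exact sequences $0 \to A \xrightarrow{t^N} A \to T_N \to 0$ and $0 \to A \xrightarrow{t} A \to R' \to 0$ fit into a morphism of SES with leftmost arrow $\cdot t^{N-1}$ and middle arrow the identity. Passing to local cohomology and running a snake lemma diagram chase, the comparison map on the right-hand terms becomes
\[
(0:t^N)_{H^{i+1}_\mathfrak{m}(A)} \xrightarrow{\,\cdot\, t^{N-1}\,} (0:t)_{H^{i+1}_\mathfrak{m}(A)},
\]
and this map is surjective for every $N$. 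Consequently $(0:t)_{H^{i+1}_\mathfrak{m}(A)} \subseteq \bigcap_N t^N H^{i+1}_\mathfrak{m}(A)$, and this intersection vanishes in each degree by Krull's intersection theorem applied to the finitely generated modules $[H^{i+1}_\mathfrak{m}(A)]_j$ over the DVR $B_{(t)}$.

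The final step is assembly: for $i+1 < d$, the module $H^{i+1}_\mathfrak{m}(A)$ is $(t)$-primary with vanishing $t$-torsion, hence $H^{i+1}_\mathfrak{m}(A) = 0$; and for $i = d-1$ we have directly $(0:t)_{H^d_\mathfrak{m}(A)} = 0$. Plugging these into the LES for $0 \to A \xrightarrow{t} A \to R' \to 0$ gives $H^i_\mathfrak{m}(R') = 0$ for every $i < d$, proving the theorem. The main obstacle I anticipate lies in the second step: precisely identifying the snake-lemma-induced comparison map with multiplication by $t^{N-1}$ on the $t^N$-torsion submodules of $H^{i+1}_\mathfrak{m}(A)$, so that the cohomological fullness surjectivity transfers cleanly to the Krull-intersection vanishing. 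The remaining ingredients---finite generation of graded local cohomology, Krull's intersection over DVRs, and the structure theorem over a PID---are standard.
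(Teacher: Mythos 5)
Your proof is correct and takes essentially the same route as the paper: the paper's proof of this lemma simply cites \cite[Theorem 1.2 and Remark 2.6]{ConcaVarbaro}, and what you have written out is precisely that argument (fiberwise Cohen--Macaulayness makes $[H^i_{\mathfrak{m}}(A)]_j$ a $(t)$-primary $B$-module for $i<d$, cohomological fullness applied to the thickenings $A/t^NA$ pushes the $t$-torsion of $H^{i+1}_{\mathfrak{m}}(A)$ into $\bigcap_N t^N H^{i+1}_{\mathfrak{m}}(A)$, and Krull intersection kills it), adapted to an arbitrary one-parameter degeneration exactly as the paper asserts is possible. The only point to tighten is the last vanishing: over $B=\kk[t]_f$ the intersection $\bigcap_N t^N[H^{i+1}_{\mathfrak{m}}(A)]_j$ need not itself be zero (by Krull it consists of the elements killed by some $1-at$, and $1-at$ need not be a unit of $B$), but an element that is simultaneously $t$-torsion and killed by $1-at$ is zero, which is all your argument needs for both the case $i+1<d$ and the top case $i+1=d$.
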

\begin{proof}
The statement follows from  \cite[Theorem 1.2]{ConcaVarbaro}.
While the theorem is stated for Gr\"obner degenerations to a Stanley-Reisner ring $R'$,
the argument works for any cohomologically full ring $R'$  \cite[Remark 2.6]{ConcaVarbaro}
and any one-parameter degeneration.
\end{proof}

\section{The Kronecker-Weierstrass type }\label{SectionKWtype}

The goal of this section is 
to study the structure of 2-determinantal ideals.
In particular, we  introduce a discrete invariant that we call the Kronecker-Weierstrass type, or simply KW type.
It is based on the Kronecker-Weierstrass theory of matrices, which we review in this section.
In order to define this invariant, we will describe the primary decomposition of  2-determinantal ideals.
Throughout this section, we assume $\kk= \overline{\kk}$.

Let $S$ be a polynomial ring over $\kk$.
A {\bf scroll block} is a matrix of the form 
$$
\begin{pmatrix}
x_1 & x_2 & x_3 & \cdots & x_{p-2} & x_{p-1} & x_{p} \\
x_2 & x_3 & x_4 & \cdots & x_{p-1} & x_{p} & x_{p+1} 
\end{pmatrix}
$$
for some variables $x_1, \ldots, x_{p+1}$.
A {\bf Jordan block} is a matrix of the form 
$$
\begin{pmatrix}
x_1 & x_2 & x_3 & \cdots & x_{p-2} & x_{p-1} & x_{p} \\
x_2 + \eps x_1& x_3+ \eps x_2 & x_4+ \eps x_3 & \cdots & x_{p-1}+ \eps x_{p-2} & x_{p}+ \eps x_{p-1} & \eps x_p
\end{pmatrix}
$$
where $\eps \in \kk$ is  called the {\bf eigenvalue} of the Jordan block.
A {\bf nilpotent  block} is a matrix of the form 
$$
\begin{pmatrix}
0 & x_1 & x_2 & x_3 & \cdots & x_{p-2} & x_{p-1} & x_{p} \\
 x_1 & x_2 & x_3 & x_4 & \cdots & x_{p-1} & x_{p} & 0
\end{pmatrix}.
$$
A {\bf concatenation} of blocks is a matrix 
$\bfM = (\,\bfM_1\, |\, \bfM_2 \,|\, \cdots \,|\,\bfM_e)$ where each 
$\bfM_i$ is a scroll, Jordan, or nilpotent block
and the sets of variables appearing in different blocks are disjoint.

\begin{prop}\label{PropExistenceConcatenation}
Let $\bfM$  be  a $2\times (c+1)$ matrix of linear forms in $S$.
Using row and column operations and  linear automorphisms of $S$, one can transform  $\bfM$ into a matrix $\bfM'$  that is a concatenation of blocks.
\end{prop}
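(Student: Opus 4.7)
The plan is to derive the proposition from the classical Kronecker--Weierstrass classification of matrix pencils. The first step is to translate the problem: writing each linear entry of $\bfM$ as a linear combination of the variables of $S$, I encode $\bfM$ as a pair $(\bfA, \bfB)$ of $(c+1) \times (n+1)$ scalar matrices, where the $j$-th row of $\bfA$ (respectively $\bfB$) is the coefficient vector of the $(1,j)$ (respectively $(2,j)$) entry of $\bfM$. Under this bijection, the three operations allowed in the statement correspond to three commuting group actions on the pencil $(\bfA, \bfB)$: $\GL_2$ mixes $\bfA$ and $\bfB$ by invertible linear combinations (coming from row operations on $\bfM$), $\GL_{c+1}$ acts on the common row index (column operations on $\bfM$), and $\GL_{n+1}$ acts on the common column index (linear automorphisms of $S$).

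Next, I will invoke the Kronecker--Weierstrass theorem. Under the $\GL_{c+1} \times \GL_{n+1}$ action alone, it brings the pencil $(\bfA, \bfB)$ into block-diagonal form built from four canonical block types: singular blocks $L_\epsilon$ of size $\epsilon \times (\epsilon+1)$, their transposes $L_\eta^T$ of size $(\eta+1) \times \eta$, Jordan blocks $J_p(\lambda)$ with finite eigenvalue $\lambda \in \kk$ (this is the point where I need $\kk = \overline{\kk}$), and nilpotent Jordan blocks $N_p$ corresponding to the eigenvalue $\infty$. I will then match each Kronecker--Weierstrass block back to a sub-matrix of $\bfM$: an $L_\epsilon$ block produces a scroll block of size $\epsilon$ on a fresh set of $\epsilon+1$ variables; an $L_\eta^T$ block produces a nilpotent block of size $\eta$ on a fresh set of $\eta$ variables; a Jordan block $J_p(\lambda)$ produces a Jordan block of the paper's form with $\eps = \lambda$.

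Finally, the remaining $\GL_2$ freedom enters in an essential way. The blocks $N_p$ do not literally match any of the three types listed in the statement, but an appropriate linear combination of the two rows of $\bfM$ --- equivalently a M\"obius transformation on the parameter $[\lambda : \mu]$ of the pencil --- sends $\lambda = \infty$ to any chosen finite value, converting each $N_p$ block into a Jordan block of the statement's form with some eigenvalue $\eps \in \kk$. Assembling the resulting sub-matrices along a common direct-sum decomposition yields a concatenation in the required sense.

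The main obstacle is bookkeeping rather than substantive mathematics: I must check carefully that the canonical blocks produced by Kronecker--Weierstrass recover, after relabeling variables and possibly a row swap, the explicit matrices listed in the paper's definitions of scroll, Jordan, and nilpotent blocks, and that the direct-sum decomposition of $(\bfA, \bfB)$ partitions the variables of $S$ into the disjoint subsets required by the definition of a concatenation.
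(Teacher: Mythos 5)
Your proposal is correct and follows essentially the same route as the paper, whose entire proof is a citation to \cite{CatalanoJohnson}*{Section 3}, where precisely this reduction to the Kronecker--Weierstrass classification of matrix pencils is carried out. The only point worth flagging is that the final $\GL_2$ (M\"obius) move eliminating the infinite-eigenvalue blocks $N_p$ must be chosen, using that $\kk=\overline{\kk}$ is infinite, so as not to send any of the finitely many finite eigenvalues to infinity; with that noted, the bookkeeping goes through as you describe.
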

\begin{proof}
See for instance \cite[Section 3]{CatalanoJohnson}.
\end{proof}

\begin{definition}\label{DefKWNormalForm}
Let $I \subseteq S$ be a 2-determinantal ideal with $\codim(I)=c$.
A {\bf Kronecker-Weierstrass normal form}  of $I$ is a
$2\times(c+1)$ matrix $\bfM$ 
that is a concatenation of blocks and satisfies 
$\varphi(I)=I_2(\bfM)$ for some linear automorphism $\varphi$ of $S$.
\end{definition}

By Proposition \ref{PropExistenceConcatenation}, such a matrix $\bfM$ always exists.
We emphasize that our definition includes the requirement that $\bfM$ has the least possible number of columns $c+1$.
For instance, the matrices
$$
\begin{pmatrix}
x_1 & x_2 & x_3 & 0 \\
x_2 & x_3 & x_4 & 0 
\end{pmatrix}
\quad 
\text{and}
\quad
\begin{pmatrix}
0 & x_1 & 0 & x_2 \\
x_1 & 0 & x_2 & 0 
\end{pmatrix}
$$
are not  Kronecker-Weierstrass normal forms,
since their ideals of minors have codimension $c=2$.
Below, we classify all 
the Kronecker-Weierstrass normal forms of 2-determinantal ideals.

\begin{prop}\label{PropKWNormalFormMaximalCodimension}
Let $\bfM$ be a $2\times(c+1)$ matrix that is a concatenation of blocks and let $I=I_2(\bfM)$.
Then, $\codim(I) = c$ if and only if one of the following two conditions holds:
\begin{enumerate}
\item
$\bfM$ consists of scroll blocks and Jordan blocks with distinct eigenvalues.
\item
$\bfM$ consists of one nilpotent block.
\end{enumerate}
\end{prop}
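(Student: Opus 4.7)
The proof is a dimension count of the affine variety $\V(I) \subseteq \AA^{n+1} = \Spec S$. By the Eagon--Northcott bound $\codim(I) \leq c$, so the task is to determine when equality $\dim \V(I) = n+1-c$ holds. Writing $\bfM = (\bfM_1 \mid \cdots \mid \bfM_e)$, denote by $e_s, e_j, e_n$ the numbers of scroll, Jordan, and nilpotent blocks, respectively, and by $V$ the total number of variables appearing in $\bfM$. Counting variables per block type (respectively $p+1$, $p$, and $p$ for blocks with $p$, $p$, and $p+1$ columns) gives $V = c + 1 + e_s - e_n$.

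My plan is to stratify $\V(I) \setminus Z$ (where $Z$ is the vanishing locus of all block variables) via the ``column direction'' map $\V(I) \setminus Z \to \P^1$: on this open locus, $\bfM(p)$ has rank exactly $1$, so its column span is a well-defined point of $\P^1$. For each block type and each direction $d \in \P^1$, I would compute the dimension of the fiber, in the block's own variables, of configurations whose columns all lie on the line $d$. A direct inspection shows: a scroll block contributes a $1$-dimensional fiber for every $d$; a Jordan block with eigenvalue $\eps$ contributes a $1$-dimensional fiber when $d = (1:\eps)$ and the trivial (zero) fiber otherwise, the constraint coming from the extremal column $(y_p, \eps y_p)$; a nilpotent block contributes the trivial fiber for every $d$, the constraint coming from the extremal columns $(0, y_1)$ and $(y_p, 0)$, which together span $\kk^2$.

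Assembling these fibers and including the $n + 1 - V$ free coordinates not used by $\bfM$, the irreducible components of $\V(I)$ are: a \emph{generic} component of dimension $e_s + 1 + (n + 1 - V) = n + 1 - c + e_n$ (present precisely when $e_s \geq 1$, with the extra $+1$ coming from the $1$-parameter family of directions); a \emph{special} component for each Jordan eigenvalue $\eps$ of dimension $e_s + e_j(\eps) + (n+1-V) = n + 1 - c + e_n + e_j(\eps) - 1$, where $e_j(\eps)$ is the number of Jordan blocks with that eigenvalue; and the zero component $Z$ of dimension $n + 1 - V = n + 1 - c + e_n - 1$. The condition $\codim(I) = c$ forces each of these to be at most $n + 1 - c$, which translates into: $e_n = 0$ when $e_s \geq 1$, $e_n + e_j(\eps) \leq 1$ for every $\eps$, and $e_n \leq 1$. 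A short case split shows the only possibilities are $e_n = 0$ with each $e_j(\eps) \leq 1$ (condition~(1)), or $e_n = 1$ forcing $e_s = e_j = 0$ (condition~(2)).

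The main obstacle is the fiber-dimension claim, in particular the asymmetry between Jordan and nilpotent blocks: a Jordan block admits exactly one special direction that produces a nontrivial fiber, whereas a nilpotent block admits none. This is precisely where the specific shape of each block's extremal columns enters. Once the fiber dimensions are in hand, the bookkeeping that the jumped fiber over a Jordan eigenvalue is a genuinely new component (not in the closure of the generic one) and the subsequent case analysis are routine.
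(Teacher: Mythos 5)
Your argument is correct, but it takes a genuinely different route from the paper: the paper does not give a self-contained proof at all, instead quoting the computation of Hilbert series for arbitrary concatenations of blocks in Chun's thesis \cite{Chun}. Your approach replaces that citation with a direct set-theoretic dimension count of $\V(I)$, stratified by the common column direction in $\P^1$, combined with the Eagon--Northcott inequality $\codim(I)\le c$ to convert ``every stratum has dimension at most $n+1-c$'' into the desired equality. The fiber computations at the heart of your argument check out: a scroll block has a one-dimensional fiber over every direction (its zero locus is the cone over a rational normal curve), a Jordan block with eigenvalue $\eps$ jumps only over $(1:\eps)$, and a nilpotent block has trivial fiber everywhere (the extremal columns force $x_1=0$ or $x_p=0$, and this propagates through the middle columns). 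What the citation to \cite{Chun} buys is finer numerical information (the full Hilbert series, hence the scheme structure), but for this proposition only the codimension of the zero set is needed, so your argument suffices and is arguably preferable as it is self-contained. One small slip: since $V=c+1+e_s-e_n$, the locus $Z$ has dimension $n+1-V=n-c-e_s+e_n$, not $n-c+e_n$ as you wrote; the two agree only when $e_s=0$, which is the only case where $Z$ can be an irreducible component rather than lying in the closure of the generic component, so the discrepancy does not affect your case analysis or the conclusion. Note also that the ``bookkeeping'' you defer to the end (whether the special fibers are genuine components) is not actually needed: the constraints come from bounding the dimension of each stratum, regardless of whether it is an irreducible component.
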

\begin{proof}
This follows  from \cite{Chun}, where the Hilbert series is calculated for all  concatenations of blocks. 
Specifically, the desired statement follows from 
\cite[Theorems 2.2.3 and 2.5.5]{Chun}.
\end{proof}

Case (2) should be considered as an exceptional case in this classification. 
Ideals of this form are simply 
  the squares of linear primes.
Equivalently, they are  the 2-determinantal ideals $I_2(\bfM)$ such that $\codim \, I_2(\bfM) = \codim\, I_1(\bfM)$.
In this paper, we are  concerned with the ideals of the form (1).

Kronecker-Weierstrass normal forms are not invariant under projective equivalence:
 the ``continuous'' data, i.e., the eigenvalues of the Jordan blocks, is not uniquely determined.
A simple example is given by the concatenation of Jordan blocks
$$
\begin{pmatrix}
x_1 & x_2 & x_3 & x_4 \\
\eps_1 x_1 & \eps_2  x_2 &\eps_3 x_3 & \eps_4 x_4 
\end{pmatrix},
$$ 
whose ideal of minors is $(x_1x_2, x_1x_3, x_1x_4, x_2x_3, x_2x_4, x_3x_4)$ as long as the eigenvalues are all distinct.
Another example is the following result,
which will be useful later.

\begin{lemma}\label{LemmaTranslatingEigenvalues}
Let $\bfM$ be a concatenation of scroll blocks and Jordan blocks. 
Given $\delta \in \kk$, let $\bfM'$  denote the matrix obtained from $\bfM$ by adding $\delta$ to all the eigenvalues of the Jordan blocks.
Then, $I_2(\bfM)$ and $I_2(\bfM')$ are the same ideal  up to a 
change of coordinates involving only the variables of the scroll blocks.
\end{lemma}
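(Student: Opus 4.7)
The plan is to combine a row operation on $\bfM$ with a linear change of coordinates that affects only the scroll variables. First, I would apply the row operation $R$ that replaces row 2 by row 2 plus $\delta$ times row 1. Since $R$ acts invertibly on the two rows, $I_2(R\cdot\bfM) = I_2(\bfM)$. A direct inspection shows that $R$ carries each Jordan block column $\binom{z_i}{z_{i+1}+\eps z_i}$ to $\binom{z_i}{z_{i+1}+(\eps+\delta)z_i}$, realizing exactly the eigenvalue shift $\eps\mapsto\eps+\delta$ appearing in $\bfM'$. On each scroll block, however, $R$ distorts the column $\binom{x_i}{x_{i+1}}$ into $\binom{x_i}{x_{i+1}+\delta x_i}$, which is no longer in scroll form. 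Thus $R\cdot\bfM$ matches $\bfM'$ in every Jordan block but has ``perturbed'' scroll blocks.

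To fix the scroll blocks, I would apply a linear change of coordinates $\varphi$ acting on each scroll block's variables $x_1,\ldots,x_{p+1}$ by
\[
\varphi(x_j) \;=\; \sum_{i=1}^{j}\binom{j-1}{i-1}(-\delta)^{\,j-i}\,x_i,
\]
and extending by the identity on all Jordan variables. The key computation is to verify that $\varphi$ applied to a perturbed scroll block yields $B\cdot T$ for some invertible upper triangular $T\in\GL_p(\kk)$, where $B$ denotes the standard scroll block in the same variables; equivalently, $\varphi$ sends the column span of the perturbed scroll onto the column span of $B$ inside $[S]_1\oplus[S]_1$. This reduces to a binomial identity that can be verified directly or by induction on $p$.

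Granting this, $\varphi(R\cdot\bfM)$ differs from $\bfM'$ only by column operations: the Jordan blocks are untouched by $\varphi$ and the perturbed scroll blocks have been brought to $B$ up to scalar column operations. Since both row and column operations preserve $I_2$, we obtain
\[
\varphi(I_2(\bfM)) \;=\; \varphi(I_2(R\cdot\bfM)) \;=\; I_2(\varphi(R\cdot\bfM)) \;=\; I_2(\bfM').
\]
As $\varphi$ involves only scroll variables, this proves the lemma. I expect the main obstacle to be the bookkeeping in the key binomial computation; the conceptual content is that the ``column span shift'' induced on each scroll block by the row operation $R$ is precisely the one that can be undone by $\varphi$, while the Jordan-to-Jordan and scroll-to-scroll mixed minors (which depend only on eigenvalue differences or involve no eigenvalue at all) remain untouched throughout.
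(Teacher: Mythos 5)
Your proposal is correct, and it is in fact more self-contained than the paper, which does not prove this lemma at all but simply cites Catalano-Johnson (pp.\ 42--43) for it. Your decomposition --- the row operation $R\colon (\text{row }2)\mapsto(\text{row }2)+\delta\,(\text{row }1)$, which shifts every Jordan eigenvalue by $\delta$ while perturbing the scroll blocks, followed by a unitriangular substitution on the scroll variables that repairs the perturbation --- is exactly the natural argument, and the one piece you left as ``a binomial identity to be verified'' does check out. Concretely, writing $E$ for the shift $x_i\mapsto x_{i+1}$, your substitution is $u_j:=\varphi(x_j)=(E-\delta)^{j-1}x_1$, and then $u_{j+1}+\delta u_j=(E-\delta)^{j-1}(E-\delta+\delta)x_1=(E-\delta)^{j-1}x_2$, so the $i$-th column of the $\varphi$-image of the perturbed scroll block is
\[
\begin{pmatrix}(E-\delta)^{i-1}x_1\\ (E-\delta)^{i-1}x_2\end{pmatrix}
=\sum_{k=0}^{i-1}\binom{i-1}{k}(-\delta)^{\,i-1-k}\begin{pmatrix}x_{k+1}\\ x_{k+2}\end{pmatrix},
\]
a linear combination of the standard scroll columns with coefficient $1$ on the $i$-th one; hence the transformed block is $B\cdot T$ with $T$ unitriangular, as you claimed. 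Since $I_2$ of a $2\times n$ matrix of linear forms depends only on the column span, and since $\varphi$ is the identity on the Jordan variables (so the shifted Jordan blocks are untouched), you get $\varphi(I_2(\bfM))=I_2(\bfM')$ with $\varphi$ supported on the scroll variables, which is precisely the statement. The only cosmetic remark is that you should say explicitly that $\varphi$ is invertible (it is lower unitriangular on each scroll block's variables and the identity elsewhere), but that is immediate from your formula.
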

\begin{proof}
This is proved in \cite[pp. 42--43]{CatalanoJohnson}.
\end{proof}

On the other hand, 
we are going to see that
the discrete data is  invariant under projective equivalence.

When   $\bfM$ is a  concatenation of scroll blocks, the ideal $I=I_2(\bfM)$ is  prime and  $\V(I)$ is a  variety of minimal degree, called a {\bf rational normal scroll}.

\begin{lemma}\label{LemmaScrollUniqueness}
Let $\bfM$ and $\bfM'$ be concatenations of scroll blocks 
of sizes $a_1 \geq \cdots \geq a_d$ 
and $a'_1 \geq \cdots \geq a'_{d'}$.
Then,  $I_2(\bfM)$ and $I_2(\bfM')$ are projectively equivalent if and only if
$d=d'$ and $a_i = a'_i$ for every $i$.
\end{lemma}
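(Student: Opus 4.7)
The plan is to handle the two directions separately. The ``if'' direction is immediate: given matching partitions, relabeling the variables so that for each $i$ the $i$-th scroll block of $\bfM'$ uses the same variables as the $i$-th scroll block of $\bfM$ produces a linear automorphism of $S$ sending $I_2(\bfM)$ to $I_2(\bfM')$. For the converse, my strategy is to extract the partition from the scheme $X = \V(I_2(\bfM))$ as a projective invariant, in two stages.

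The first stage pins down the basic numerical invariants. The scheme $X$ is a rational normal scroll, hence a smooth irreducible variety of dimension $d$ and degree $\sum a_i = c$. Since projective equivalence preserves both dimension and degree, this immediately yields $d = d'$ and $\sum a_i = \sum a_i'$.

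The second stage recovers the individual parts $a_i$ via the ruling. The matrix $\bfM$ induces a morphism $\pi \colon X \to \P^1$ whose fibers are the $(d-1)$-planes cut out by the minors of the $2 \times c$ submatrices obtained by specializing a row parameter of $\bfM$. When $d \geq 2$, I will argue that this is the unique one-parameter family of $\P^{d-1}$'s sweeping $X$, so that $\pi$ depends only on $X$ up to $\mathrm{Aut}(\P^1)$. Then $X \cong \P(\mathcal{E})$ where $\mathcal{E} = \pi_\ast \mathcal{O}_X(1) \cong \bigoplus_{i=1}^d \mathcal{O}_{\P^1}(a_i)$, and Grothendieck's splitting theorem implies that the decreasingly ordered tuple $(a_1, \ldots, a_d)$ is an invariant of $\mathcal{E}$, hence of $X$. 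The case $d = 1$ reduces trivially to the fact that a rational normal curve is determined up to projective equivalence by its degree.

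The main obstacle will be the uniqueness of the ruling in the second stage: showing that the ruling produced by $\bfM$ depends only on $X$ and not on the Kronecker-Weierstrass presentation. This is a classical fact about rational normal scrolls—equivalently, a uniqueness statement for the Fano variety of $(d-1)$-dimensional linear subspaces of $X$—and I would either deduce it from the structure theory of \cite{EisenbudHarris} or prove it directly by showing that a second family of $\P^{d-1}$'s on $X$ would force two distinct rulings to meet in a $(d-2)$-plane, contradicting $\dim X = d$.
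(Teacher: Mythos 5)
The paper does not actually prove this lemma: its ``proof'' is a one-line citation to Harris, so there is no internal argument to compare against. Your proposal supplies the standard geometric proof that the citation is standing in for, and in outline it is correct: dimension and degree are projective invariants and give $d=d'$ and $\sum a_i = \sum a_i'$; the finer data $(a_1,\dots,a_d)$ is recovered as the splitting type of $\pi_*\mathcal{O}_X(1)$ over the ruling, which is well defined by Grothendieck's theorem once the ruling itself is shown to be intrinsic. This buys an actual self-contained argument where the paper has none, at the cost of importing the $\P(\mathcal{E})$ structure theory of scrolls.

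Two points need repair before this is a proof. First, a bookkeeping slip: a concatenation of scroll blocks of sizes $a_1,\dots,a_d$ is a $2\times(\sum a_i)$ matrix, so $c=\sum a_i - 1$ and $\deg X = c+1 = \sum a_i$; your ``degree $\sum a_i = c$'' is off by one, though this does not affect the conclusion that $\sum a_i$ is an invariant. Second, and more substantively, the uniqueness of the ruling by $(d-1)$-planes is \emph{false} as stated for $X = \V(I_2(\bfM))$ with partition $(1,1)$: this is the smooth quadric surface in $\P^3$, which has two rulings by lines. Your proposed direct argument (``two distinct rulings would meet in a $(d-2)$-plane, contradicting $\dim X = d$'') cannot work as written, precisely because this configuration occurs on the quadric. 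The lemma survives because in that case $d$ and the degree already force $a_1=a_2=1$, but you must carve out this exception explicitly (for $d\geq 2$ and $\deg X\geq 3$ the ruling is unique, e.g.\ by an intersection-theoretic computation on $\P(\mathcal{E})$ or by citing the classification in \cite{EisenbudHarris}), rather than assert uniqueness unconditionally. With those two fixes the argument is complete.
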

\begin{proof}
This is well-known, see for instance  \cite{Harris}. 
\end{proof}

We now describe the primary decomposition of the ideals of the form  (1) in Proposition \ref{PropKWNormalFormMaximalCodimension}.
With a slight abuse of terminology, by {\bf multiplicity} of an ideal $I \subseteq S$ we mean the degree of the subscheme $\V(I)$,
equivalently, the multiplicity of the ring $S/I$, and we denote this number by $\mult(I)$.

\begin{thm}\label{ThmPrimaryDecomposition}
Let $ \bfM = (\,\bfS_1 \,|\, \cdots \,|\, \bfS_d \,|\, \bfJ_1 \,|\, \cdots \,|\, \bfJ_e )  $ be a  concatenation of scroll blocks
 $\bfS_1,\dots,\bfS_d$ and Jordan blocks  $\bfJ_1,\dots,\bfJ_e$  with distinct eigenvalues. 
The ideal  $I=I_2(\bfM)$ has  $e+1$ primary components. 
One  component is the 
prime ideal
$
\mfp_0 = 
I_2
(\,\bfS_1 \,|\, \cdots \,|\, \bfS_d )
+
I_1
(\, \bfJ_1 \,|\, \cdots \,|\, \bfJ_e ),
$ 
which defines a  rational normal scroll.
For each $1 \leq i \leq e$, there is a component 
$\mfq_i$
primary to  a linear prime and with multiplicity equal to the number of columns of $\,\bfJ_i$.
\end{thm}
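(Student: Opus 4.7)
The plan is to exhibit the $e+1$ candidate primes explicitly, verify the inclusions $I \subseteq \mfp_0$ and $I \subseteq \mathfrak{l}_i$ (where $\mathfrak{l}_i$ denotes the candidate radical of $\mfq_i$), and close the argument via additivity of multiplicity. Since $S/I$ is Cohen-Macaulay of minimal multiplicity $c+1$, it is unmixed and every associated prime has codimension $c$, so
\[
c+1 \;=\; \mult(I) \;=\; \sum_{\mfp\in\mathrm{Min}(I)} \mult(S/\mfp)\cdot\mathrm{length}(S_\mfp/IS_\mfp).
\]
It therefore suffices to exhibit candidate minimal primes whose contributions already sum to $c+1$.

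For the scroll prime, $\mfp_0$ is prime because modulo the linear forms $I_1(\bfJ_1|\cdots|\bfJ_e)$ it becomes the defining ideal of a rational normal scroll, which is irreducible; by the Eagon-Northcott bound it has codimension $(\sum p_k - 1) + \sum q_j = c$. Every $2\times 2$ minor of $\bfM$ lies in $\mfp_0$: minors with both columns in scroll blocks live in $I_2(\bfS_1|\cdots|\bfS_d)$, while minors involving a Jordan column have both summands divisible by a Jordan variable. Its multiplicity is the scroll degree $\mult(S/\mfp_0) = \sum p_k$. For each Jordan block $\bfJ_i$ with eigenvalue $\eps_i$, I would define $\mathfrak{l}_i$ to be the linear ideal spanned by (a) the forms ``second-row entry $-\,\eps_i\cdot$first-row entry'' obtained from every column of every scroll block and of $\bfJ_i$ (the last column of $\bfJ_i$ contributing only the trivial form $0$), and (b) all variables of the Jordan blocks $\bfJ_k$ with $k\neq i$. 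A direct count yields $\sum p_k + (q_i - 1) + \sum_{k\neq i} q_k = c$ linearly independent generators, making $\mathfrak{l}_i$ a linear prime of codimension $c$; the inclusion $I \subseteq \mathfrak{l}_i$ follows because modulo $\mathfrak{l}_i$ every column of the blocks $\bfS_k$ and $\bfJ_i$ is proportional to a common 2-vector (so all their mutual minors vanish), while every entry of each $\bfJ_k$ ($k\neq i$) lies in $\mathfrak{l}_i$.

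The main technical step is to show $\mathrm{length}(S_{\mathfrak{l}_i}/IS_{\mathfrak{l}_i}) = q_i$. Using Lemma \ref{LemmaTranslatingEigenvalues} I would translate so that $\eps_i = 0$, shifting the other eigenvalues to $\eps_k' := \eps_k - \eps_i \neq 0$; then $\bfJ_i$ becomes the ``almost nilpotent'' block with top row $z_1,\ldots,z_{q_i}$ and bottom row $z_2,\ldots,z_{q_i},0$, and $\mathfrak{l}_i$ is generated by $z_2,\ldots,z_{q_i}$, the second-row entries $y_1^{(k)},\ldots,y_{p_k}^{(k)}$ of each scroll block, and all variables of the other Jordan blocks. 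In the localization $S_{\mathfrak{l}_i}$, the first-row entries $y_0^{(k)}$ and $z_1$ are units. The internal scroll minors, the scroll-scroll and scroll-$\bfJ_i$ cross-minors, and the internal $\bfJ_i$-minors force a single parameter $t$ common to all blocks with $y_j^{(k)} = t^j y_0^{(k)}$ and $z_j = t^{j-1} z_1$; the last-column scroll-$\bfJ_i$ cross-minor produces $t^{q_i} = 0$. For each $k\neq i$, the scroll-$\bfJ_k$ cross-minors give the recursion $w^{(k)}_{\beta+1} = (t-\eps_k')w^{(k)}_\beta$ together with the last-column relation $w^{(k)}_{q_k}(\eps_k'-t)=0$; since $t$ is nilpotent and $\eps_k'\neq 0$, the element $\eps_k'-t$ is a unit, forcing all variables of $\bfJ_k$ to vanish in the localization. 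Hence $S_{\mathfrak{l}_i}/IS_{\mathfrak{l}_i} \cong \kappa(\mathfrak{l}_i)[t]/(t^{q_i})$, which has length $q_i$.

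An analogous localization argument at $\mfp_0$ gives length $1$: the last-column cross-minor between a scroll block and $\bfJ_k$ is a unit multiple of $z^{(k)}_{q_k}$, the unit being the linear form $\eps_k v_1 - v_2$ (which is a scroll linear form and hence cannot lie in $\mfp_0$), so $z^{(k)}_{q_k} \in IS_{\mfp_0}$; iterating backwards along the columns of $\bfJ_k$ recovers every Jordan variable in $IS_{\mfp_0}$, whence $IS_{\mfp_0} = \mfp_0 S_{\mfp_0}$. The additivity formula then reads $\sum p_k\cdot 1 + \sum q_i\cdot 1 = c+1 = \mult(I)$, so the total multiplicity is accounted for and no further minimal primes exist; the primary components $\mfq_i$ are thereby identified with those obtained from the localizations at $\mathfrak{l}_i$, with the asserted multiplicities $q_i$. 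The most delicate point is the length computation at $\mathfrak{l}_i$: the cascade that trivializes the variables of the ``other'' Jordan blocks in the localization depends crucially on the distinctness of the eigenvalues, since only then does $\eps_k'-t$ possess nonzero constant term and become a unit modulo the nilpotent $t$.
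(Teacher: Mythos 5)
Your proposal is correct and follows essentially the same route as the paper: Cohen--Macaulayness gives unmixedness and the additivity of multiplicity, your candidate primes coincide with the paper's (your $\mathfrak{l}_i$ is exactly the second row of $\bfM$ written before translating the eigenvalue $\eps_i$ to $0$), and the key length computation $(S/I)_{\mathfrak{l}_i}\cong \kappa(\mathfrak{l}_i)[t]/(t^{q_i})$ --- including the crucial observation that $\eps_k'-t$ is a unit precisely because the eigenvalues are distinct --- is the paper's computation. The only cosmetic differences are that you verify $\mathrm{length}\big((S/I)_{\mfp_0}\big)=1$ directly, whereas the paper deduces $\mfq_0=\mfp_0$ from the multiplicity count, and that you derive $t^{q_i}=0$ and kill the variables of $\bfJ_k$ ($k\ne i$) using cross-minors with scroll columns, where the paper uses the minors involving the first column of $\bfJ_i$ (which also covers the degenerate case $d=0$).
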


\begin{proof}
Assume $\bfM$ has $c+1$ columns. 
Since $\codim(I)=c$ by Proposition \ref{PropKWNormalFormMaximalCodimension},
  the ring $S/I$ is  Cohen-Macaulay of multiplicity $c+1$.
It follows that $I$ is unmixed, i.e. all associated primes are minimal and have codimension $c$,
and the multiplicities  of the primary components  add up to $c+1$.

We fix a notation for the variables in the blocks of $\bfM$ by setting
$$
\bfS_i = 
\begin{pmatrix}
x_{i,1} & x_{i,2}  & \cdots &  x_{i,a_i} \\
x_{i,2} & x_{i,3} & \cdots  & x_{i,a_i+1} 
\end{pmatrix} \quad \text{and} \quad
\bfJ_i = 
\begin{pmatrix}
y_{i,1} &  \cdots & y_{i,b_i-1} & y_{i,b_i} \\
y_{i,2} + \eps_i y_{i,1}  & \cdots & y_{i,b_i}+ \eps_i y_{i,b_i-1} & \eps_i y_{i,b_i}
\end{pmatrix}.
$$
In particular, we have $c +1= \sum_{i=1}^d a_i + \sum_{i=1}^e b_i$.
Up to adjoining variables, we may assume that the ambient polynomial ring is $S = \kk[x_{1,1},\dots,x_{d,a_d+1},y_{1,1},\dots,y_{e,b_e}]$.

Consider the ideals 
$J' = I_2
(\,\bfS_1 \,|\, \cdots \,|\, \bfS_d)
$
and 
$
J'' = 
I_1
(\, \bfJ_1 \,|\, \cdots \,|\, \bfJ_e \,).
$ 
They are the defining ideals of   a rational normal scroll and a linear space, respectively. 
It follows that they are both prime and that
$$\codim(J') +1= \mult(J')=\sum_{i=1}^d a_i,\quad \codim(J'') = \sum_{i=1}^e b_i, \quad \text{and} \quad \mult(J'')=1.
$$
Since $J'$ and $J''$ involve disjoint sets of variables, 
the ideal $\mfp_0 = J_1+J_2$ is prime with $\codim(\mfp_0) = \codim(J_1)+ \codim(J_2) = c$ and $\mult(\mfp_0) = \mult(J_1) \mult(J_2)= \sum_{i=1}^d a_i$.
We conclude that $\mfp_0$ is a minimal prime of $I$, since $I\subseteq \mfp_0$ and $\codim(I)=\codim(\mfp_0)$.
Let $\mfq_0$
denote the $\mfp_0$-primary component of $I$. Then,
$\mult(\mfq_0) \geq \mult(\mfp_0)$, and
$\mult(\mfq_0) = \mult(\mfp_0)$ if and only if
$\mfq_0=\mfp_0$.

We claim that,
 for every Jordan block $\bfJ_i$,
  there is a  linear minimal prime $\mfp_i$ of $I$ such that $I_1(\bfJ_i) \not\subseteq \mfp_i$ and 
$I_1(\bfJ_h) \subseteq \mfp_i$ for all $h \ne i$,
and such that the $\mfp_i$-primary component $\mfq_i$ has $\mult(\mfq_i) = b_i$.
This claim implies the conclusion of the theorem.
In fact, it follows from the claim that the minimal primes $\mfp_0, \ldots, \mfp_e$ are all  distinct.
Moreover, since the sum of the multiplicities of all the primary components of $I$ must equal $\mult(I) = 
 \sum_{i=1}^d a_i + \sum_{i=1}^e b_i$,
  we can
  conclude   
 that  $\mult(\mfq_0) = \sum_{i=1}^d a_i$, and hence $\mfq_0=\mfp_0$. 
 This shows that $\mfq_0, \mfq_1,\dots,\mfq_e$
are the only primary components.

It suffices to prove the claim for one Jordan block, say $\bfJ_1$.
Applying Lemma \ref{LemmaTranslatingEigenvalues},
 we may assume that its eigenvalue is $\eps_1 = 0$.
Let $\mfp_1$ be the linear prime generated by the second row of $\bfM$, equivalently,
$$
\mfp_1 = 
\sum_{i=1}^d\big(x_{i,2},\dots,x_{i,a_i+1}\big) + 
\big(y_{1,2}, \ldots, y_{1,b_1}\big) +
 \sum_{i=2}^e\big(y_{i,1},\dots,y_{i,b_i}\big).
 $$
Then $\codim(\mfp_1) = \sum_{i=1}^d a_i + (b_1-1) + \sum_{i=1}^eb_i= c = \codim(I)$ and $I = I_2(\bfM)\subseteq \mfp_1$,
so $\mfp_1$ is a minimal prime.
It remains to show that $\mult(\mfq_1) = b_1$,
equivalently,
that the Artinian ring $(S/I)_{\mfp_1}$ has length  $b_1$.
Observe that $\mathbb{K}=\kk(x_{1,1},x_{2,1}\dots,x_{d,1},y_{1,1})\subseteq (S/I)_{\mfp_1}$. 
We are going to show that 
\begin{equation}\label{EqIsomorphismLength}
(S/I)_{\mfp_1} \cong {\mathbb{K}[z]}/{\big(z^{b_1}\big)}
\end{equation}
thereby concluding  the proof.

Let $a = \sum_{i=1}^d a_i$ be the number of scroll columns.
The variable  $y_{1,1}$, in position $(1,a+1)$ in $\bfM$, 
is invertible in $S_{\mfp_1}$.
Using row and  column operations we reduce  $\bfM$ to
$$
\begin{pmatrix}
0 & \cdots & 0 & 1				 & 0 	 		& \cdots &  0\\
\Delta_1 & \cdots & \Delta_a &  0 & \Delta_{a+2} 	& \cdots  & \Delta_{c+1}
\end{pmatrix},
$$
where $\Delta_i$ is the  $2\times 2$ minor of $\bfM$ 
involving columns $i$ and  $a+1$.
Thus,
 we have
$$
I_{\mfp_1} = 
\big(\Delta_1, \ldots, \Delta_a, \Delta_{a+2},\ldots, \Delta_{c+1}\big)
S_{\mfp_1}.
$$
It is convenient  to relabel the minors with respect to variables and blocks they appear in. Let
$$
z = \frac{y_{1,2}}{y_{1,1}}, \quad
\Delta^x_{i,j} := x_{i,j+1}  - zx_{i,j}, \quad \text{and} \quad
\Delta^y_{i,j} = 
\begin{cases}
y_{i,j+1} +(\eps_i-z)y_{i,j}  & \text{ if }  j < b_i, \\
(\eps_i-z)y_{i,b_i}  & \text{ if } j = b_i.
\end{cases}
$$
We have 
\begin{equation}\label{EqISum}
I_{\mfp_1}
= 
\sum_{i=1}^d \big(\Delta^x_{i,1}, \ldots, \Delta^x_{i,a_i}\big)
+ 
\big(\Delta^y_{1,2},\ldots, \Delta^y_{1,b_1}\big)
+
\sum_{i=2}^e 
\big(\Delta^y_{i,1},\ldots, \Delta^y_{i,b_i}\big).
\end{equation}
For each scroll block $\bfS_i$ we may rewrite
\begin{equation*}
\begin{aligned}
\big(\Delta^x_{i,1}, \ldots, \Delta^x_{i,a_i}\big) 
&=
\big(x_{i,2}-zx_{i,1},\,x_{i,3}-zx_{i,2},\, \ldots,\, x_{i,a_i+1}-zx_{i,a_i}\big) \\
&=
\big(x_{i,2}-zx_{i,1},\,x_{i,3}-z^2x_{i,1},\, \ldots, \,
x_{i,a_i+1}-z^{a_i}x_{i,1}\big).
\end{aligned}
\end{equation*}
For the Jordan block $\bfJ_1$ we have
\begin{equation*}
\begin{aligned}
\big(\Delta^y_{1,2}, \ldots, \Delta^y_{1,b_1}\big) 
&=
\big(y_{1,3}-zy_{1,2},\, y_{1,4}-zy_{1,3},\, \ldots, \,
y_{1,b_1}-zy_{1,b_1-1},\,
-zy_{1,b_1}\big) 
\\
&=
\big(y_{1,3}-zy_{1,2},\, y_{1,4}-z^2y_{1,2},\, \ldots, \,
y_{1,b_1}-z^{b_1-2}y_{1,2},\,
zy_{1,b_1}
\big) 
\\
&=
\big(y_{1,3}-zy_{1,2},\, y_{1,4}-z^2y_{1,2},\, \ldots, \,
y_{1,b_1}-z^{b_1-2}y_{1,2},\,
z^{b_1-1}y_{1,2}
\big)
\\
&=
\big(y_{1,3}-zy_{1,2},\, y_{1,4}-z^2y_{1,2},\, \ldots, \,
y_{1,b_1}-z^{b_1-2}y_{1,2},\,
z^{b_1}y_{1,1}
\big).
\end{aligned}
\end{equation*}
For the Jordan blocks $\bfJ_i$ with $i>1$ we consider the unit $\gamma_i = (z-\eps_i ) \in S_{\mfp_1}$ and we have
\begin{equation*}
\begin{aligned}
\big(\Delta^y_{i,1}, \ldots, \Delta^y_{1,b_i}\big) 
&=
\big(y_{i,2}-\gamma_iy_{i,1},\, y_{i,3}-\gamma_iy_{i,2},\, \ldots, \,
y_{i,b_i}-\gamma_iy_{i,b_i-1},\,
-\gamma_iy_{i,b_i}\big) 
\\
&=
\big(y_{i,2}-\gamma_iy_{i,1},\, y_{i,3}-\gamma_i^2y_{i,1},\, \ldots, \,
y_{i,b_i}-\gamma_i^{b_i-1}y_{i,1},\,
y_{i,b_i}\big) 
\\
&=
\big(y_{i,2},\, y_{i,3},\, \ldots, y_{i,{b_i-1}}\,
y_{i,1},\,
y_{i,b_i}\big).
\end{aligned}
\end{equation*}
Thus, we may apply a local change of coordinates in the ring $S_{\mfp_1}$ carrying
\begin{eqnarray*}
\big(\Delta^x_{1,1}, \ldots, \Delta^x_{1,a_1}\big) 
&\quad\text{to}\quad &
\big(x_{i,2},\,x_{i,3},\, \ldots, \,
x_{i,a_i+1}\big),
\\
\big(\Delta^y_{1,2}, \ldots, \Delta^y_{1,b_1}\big) 
&\quad\text{to}\quad &
\big(y_{1,3},\, y_{1,4},\, \ldots, \,
y_{1,b_1},\,
z^{b_1}
\big),
\\
\big(\Delta^y_{i,1}, \ldots, \Delta^y_{1,b_i}\big) 
&\quad\text{to}\quad &
\big(y_{i,1},\, y_{i,2},\, \ldots, \,
y_{i,b_i}\big) \quad \text{for} \quad i >1.
\end{eqnarray*} 
The desired conclusion \eqref{EqIsomorphismLength} follows from \eqref{EqISum}.
\end{proof}

It follows from Proposition \ref{PropKWNormalFormMaximalCodimension},
Lemma \ref{LemmaScrollUniqueness},
 and Theorem \ref{ThmPrimaryDecomposition} that the Kronecker-Weierstrass normal form of a 2-determinantal ideal $I$ is uniquely determined, up to the eigenvalues  and rearrangement of the blocks.
 Thus,
we may give the following definition of Kronecker-Weierstrass type.
In this paper,  an {\bf integer partition} is a sequence of integers $\lambda = (\lambda_1, \ldots, \lambda_d)$ such that $\lambda_1 \geq \cdots \geq \lambda_d >0$.
We use the exponential notation to denote repetitions, for example, $(3^2,2,1^4) = (3,3,2,1,1,1,1)$.

\begin{definition}\label{DefKWtype}
Let $\lambda=(\lambda_1, \ldots, \lambda_d)$ and $\mu=(\mu_1, \ldots, \mu_e)$
be integer partitions.
An ideal $I\subseteq S$ has {\bf Kronecker-Weierstrass type} $(\lambda;\mu)$ if
there is a linear automorphism $\varphi$ of $S$ such that $\varphi(I) = I_2(\bfM)$,
for some concatenation  
$\bfM = 
(\,\bfS_1 \,|\, \cdots \,|\, \bfS_d \,|\, \bfJ_1 \,|\, \cdots \,|\, \bfJ_e )  $  of scroll blocks $\bfS_i$ of size $\lambda_i$ and Jordan blocks 
$\bfJ_i$ of size $\mu_i$ with distinct eigenvalues.
An ideal $I\subseteq S$ is of {\bf nilpotent Kronecker-Weierstrass type} if it is the square of a linear prime.
\end{definition}

We will often abbreviate the phrase ``Kronecker-Weierstrass type'' to ``KW type''.

We point out that the Kronecker-Weierstrass type is not a \emph{complete} invariant of projective equivalence.
In other words, unlike the case of rational normal scrolls,
there exist  2-determinantal ideals with the same KW  type that are not projectively equivalent.

\begin{example}
We consider  ideals of KW type $(1,1;1^n)$.
For each set 
 $\mathbf{e} = \{\eps_1, \ldots, \eps_n\} $ of distinct eigenvalues,  consider the 
Kronecker-Weierstrass normal form
$$
\bfM(\mathbf{e}) = 
\begin{pmatrix}
x_1 & x_3 & y_1 & y_2 & \cdots & y_n \\
x_2 & x_4 & \eps_1 y_1 & \eps_2 y_2 & \cdots & \eps_n y_n \\
\end{pmatrix}.
$$
The ideal $I(\mathbf{e}) = I_2(\bfM(\mathbf{e}))$ has codimension $n+1$ and multiplicity $n+2$.
By Theorem \ref{ThmPrimaryDecomposition},
 it has $n+1$ primary components,  and they are all prime.
In fact, from the proof we see that they are
\begin{eqnarray*}
\mfp_0 &=& (x_1x_4-x_2x_3, y_1, \ldots, y_n) \quad \text{and} \\
\mfp_i(\mathbf{e}) &=& (x_2- \eps_i x_1, x_4 -\eps_i x_3,  y_1, \ldots, y_{i-1}, y_{i+1},y_n) \quad \text{for} \quad i = 1, \ldots, n.
\end{eqnarray*}
The linear span  of the quadric surface  $\V(\mfp_0)$ 
is  $\P^3=\V(y_1, \ldots, y_n)$. 
Consider the lines 
$$
\ell_i(\mathbf{e}) =  \V(\mfp_i(\mathbf{e})) \cap \V(\mfp_0) = \V(\mfp_i(\mathbf{e})) \cap \P^3 = \V(x_2- \eps_i x_1, x_4 -\eps_i x_3)\subseteq \P^3
$$
and their collection $L(\mathbf{e}) = \big\{\ell_1(\mathbf{e}), \ldots, \ell_n(\mathbf{e})\big\}$.
As $\mathbf{e}$ varies, $L(\mathbf{e})$ describes a locus $\mathcal{L}\subseteq \mathrm{Sym}^n\, \Gr(1,\P^3)$ of dimension $n$.
Now, suppose there exists a linear automorphism $\varphi$ of $S$
such that  $\varphi(I(\mathbf{e}))= I({\mathbf{e}'})$ for  two tuples $\mathbf{e}, \mathbf{e}'$.
Then $\varphi$  must fix the scroll component $\V(\mfp_0)$ and, hence,
 its linear span,
so it restricts to a linear automorphism $\overline{\varphi}$ of $\P^3$.
Moreover, $\varphi$ 
must carry the components $\{\mfp_1(\bfe), \ldots, \mfp_n(\bfe)\}$ to  $\{\mfp_1(\bfe'), \ldots, \mfp_n(\bfe')\}$,
 therefore,  $\overline{\varphi}$ must carry 
 $L(\bfe)$ to $L(\bfe')$.
 If $n \gg 0$, it is clear, by dimension considerations, that $\mathcal{L}$ cannot be a single $\mathrm{GL}_4(\kk)$-orbit. Thus, for general $\mathbf{e},\mathbf{e}'$, there will be no such linear automorphism $\varphi$.
\end{example}

\begin{remark}
The work \cite{AEKP} contains 
 results related to the content of this section.
Roughly speaking, the authors  describe the primary decomposition of the determinantal ideals of a matrix consisting only  of Jordan blocks, but with possibly repeated eigenvalues.
\end{remark}

\section{Degenerations of 2-determinantal ideals}
\label{SectionDegenerations}

Let $S=\mathrm{Sym}(\kk^{n+1})$ be a  polynomial ring  over $\kk = \overline{\kk}$.
All 2-determinantal ideals of $S$ of a fixed codimension $c$ are parametrized by points of the same Hilbert scheme $\Hilb^{p(\zeta)}(\P^n)$,
see  Section \ref{SectionPreliminaries}.
It is therefore natural to investigate 
 degenerations among 2-determinantal ideals: 
when does a given 2-determinantal ideal degenerate to another one?
In particular, for applications to blowup algebras, singularities, defining equations, etc., it is desirable to identify a set of 
``most degenerate'' 2-determinantal ideals in the Hilbert scheme.
There is a straightforward answer to the latter problem:
the revlex generic initial ideal of any 2-determinantal ideal is  of the form $(x_1, \ldots, x_c)^2$,
which is itself 2-determinantal, of nilpotent KW type.
However, 
this answer is not satisfactory,
because any such degeneration will usually fail to induce a degeneration of the blowup algebras.

We are interested in degenerations of 2-determinantal ideals $I=I_2(\bfM)$,
not of nilpotent KW-type,
 with the additional requirement that    $\codim \, I_1(\bfM)$ is constant throughout the degeneration.
Up to adjoining variables,
the problem reduces to the case where
$\codim \, I_1(\bfM)  = \dim_\kk[S]_1 = n+1$,
that is, where the entries of $\bfM$ generate the maximal ideal of $S$.
In terms of KW type, 
this  amounts to the fact
that the number of scroll blocks is equal to $d = n-c = \dim \V(I)$.

\begin{definition}\label{DefHcd}
Let $c,d\in \N$ and let $n = c+d$.
We denote by $\mcH_{c,d}\subseteq \Hilb^{p(\zeta)}(\P^n)$ the locus of 2-determinantal ideals in $S=\mathrm{Sym}(\kk^{n+1})$ of codimension $c$ that have $d$ scroll blocks in their Kronecker-Weierstrass normal form.
\end{definition}

Equivalently, $\mcH_{c,d}\subseteq \Hilb^{p(\zeta)}(\P^n)$ is the locus of 2-determinantal schemes that are not cones.
It is clear that 
$\mcH_{c,d}\ne \emptyset$ if and only if $0 \leq d \leq c+1$.
If $c \geq 2$, 
the closure  $\overline{\mcH_{c,d}}$ is 
a generically smooth  irreducible  component of $\Hilb^{p(\zeta)}(\P^n)$, 
see for instance \cite[Theorem 3.3]{KleppeMiroRoig}.
The case $c=1$ is not very interesting:
we have $d \leq 2$, 
$ \Hilb^{p(\zeta)}(\P^{d+1})$ is just the projective space of quadric hypersurfaces, and $\mcH_{1,d}$ is the locus of determinantal quadrics.

Let $\mathcal{L}_{(\lambda;\mu)} \subseteq \mcH_{c,d}$ denote the locus  of 2-determinantal ideals of KW type $(\lambda;\mu)$.
By Proposition \ref{PropExistenceConcatenation} and Definition \ref{DefKWNormalForm}, 
the subset  $\mathcal{L}_{(\lambda;\mu)}$ is  the image of a morphism $\mathcal{U}\times \mathrm{Aut}_\kk(S) \to \Hilb^{p(\zeta)}(\P^n)$, 
where $\mathcal{U}$ is the affine variety parametrizing  tuples of distinct eigenvalues
and $\mathrm{Aut}_\kk(S)$ is the group of $\kk$-algebra automorphisms of $S$, that is, 
$\mathrm{Aut}_\kk(S) \cong \GL_{n+1}(\kk)$.
It follows that each $\mathcal{L}_{(\lambda;\mu)}$  is irreducible, and that it is a constructible subset of $\Hilb^{p(\zeta)}(\P^n)$, i.e., it is a disjoint union of locally closed subsets. 
Moreover, by Theorem \ref{ThmPrimaryDecomposition} and Definition \ref{DefKWtype},
the space $\mcH_{c,d}$ is equal to the disjoint union of all the 
 $\mathcal{L}_{(\lambda;\mu)}$,
 in other words,
 the loci  $\mathcal{L}_{(\lambda;\mu)}$ form a stratification of $\mcH_{c,d}$.
In order to answer the questions stated  at the beginning of this section, 
 we will study  the poset  defined  by the relation $\mathcal{L}_{(\lambda;\mu)}\subseteq \overline{\mathcal{L}_{(\tau;\rho)}}$.

A classical theorem of Harris gives a complete solution 
for the case of scrolls.

\begin{thm}[{\cite[Section 3']{Harris}}]\label{TheoremHarrisPoset}
Let $I,I'\in \mcH_{c,d}$ be the ideals of two rational normal scrolls, with partitions 
$\lambda = (\lambda_1, \ldots, \lambda_d)$ and 
$\lambda' = (\lambda'_1, \ldots, \lambda'_{d})$,
respectively.
Then, $I$ degenerates to $I'$ if and only if 
and $\lambda$ is \emph{more balanced} than $\lambda'$, that is,
if and only if
$
\sum_{i=e}^d \lambda_i \geq \sum_{i=e}^d \lambda'_i
$
 for every
$
e = 1, \ldots, d.
$
\end{thm}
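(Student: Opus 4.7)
The plan is to prove each direction separately, reducing to the covering relations of the underlying partition poset.

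The relation ``\emph{$\lambda$ more balanced than $\lambda'$}'' is exactly the opposite of the dominance order on partitions of $c+1$ into $d$ parts, whose covering relations are the single box-moves $\lambda \lessdot \lambda'$ in which two entries of the partition differ by $\pm 1$. Both the degeneration relation and this combinatorial relation are transitive (one-parameter families can be composed by gluing along the closed fiber, and the tail-sum inequalities are closed under addition), so it suffices to prove each implication when $\lambda'$ covers $\lambda$.

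For sufficiency (partition inequality $\Rightarrow$ degeneration) I would write down an explicit one-parameter family. After reordering scroll blocks, which is harmless since the KW type is a multiset of block sizes, I may assume the two blocks affected by the box-move are adjacent. I would then choose Kronecker--Weierstrass normal forms $\bfM_\lambda$ and $\bfM_{\lambda'}$ inside a common ambient ring $S$ so that they agree in every column except a single ``junction'' column between the two affected blocks; a linear interpolation of that single column produces a family $\bfM(t)$ of $2\times(c+1)$ matrices of linear forms with $\bfM(0)=\bfM_{\lambda'}$ and $\bfM(\alpha)\simeq\bfM_\lambda$, up to a linear change of coordinates, for $\alpha\neq 0$. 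Flatness of the family of graded algebras is then immediate from Lemma~\ref{LemmaFlatnessHilbertFunction}: every $\bfM(\alpha)$ has maximal codimension, so each fiber $S/I_2(\bfM(\alpha))$ is resolved by the Eagon--Northcott complex and they share a common Hilbert function.

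For necessity (degeneration $\Rightarrow$ partition inequality) I would extract semi-continuous numerical invariants of the pencil $\bfM$ that recover the partial sums $\sigma_e(\lambda) := \sum_{i=e}^d \lambda_i$. Classical Kronecker pencil theory realizes $\lambda$ as the splitting type of the kernel of $\bfM\colon \mathcal{O}_{\P^1}(-1)^{c+1}\to \mathcal{O}_{\P^1}^2$ viewed as a map of bundles on $\P^1$, and each $\sigma_e(\lambda)$ equals $h^0$ of an appropriate twist of this kernel, a quantity that is upper semi-continuous in one-parameter families. A degeneration of $I$ to $I'$ therefore forces the required inequalities $\sigma_e(\lambda) \geq \sigma_e(\lambda')$ for all $e$. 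The main obstacle is precisely this necessity direction: making rigorous the bridge from the projective geometry of the scroll to the bundle-theoretic description of its defining pencil, so that the partition inequality becomes a semi-continuity statement about cohomology on $\P^1$. The sufficient direction, by contrast, is an essentially explicit matrix construction whose only nontrivial content is the flatness check via Hilbert function constancy.
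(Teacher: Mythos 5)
First, note that the paper does not prove Theorem~\ref{TheoremHarrisPoset}: it is imported verbatim from Harris \cite{Harris}, so there is no internal proof to compare against. The closest internal analogues are Lemmas~\ref{LemmaDegenerationPeelingScrollColumn} and~\ref{LemmaDegenerationTwoJordanBlocks}, which implement exactly your sufficiency strategy (perturb a single column, identify the general fiber by explicit row/column operations and coordinate changes, deduce flatness from constancy of the Hilbert function via Lemma~\ref{LemmaFlatnessHilbertFunction}) for the Kronecker--Weierstrass strata. Your overall plan is the standard one and is sound in outline; the reduction to covering moves is fine, since any $\mu$ with $\lambda\trianglelefteq\mu\trianglelefteq\lambda'$ satisfies $\mu_d\geq\lambda'_d\geq 1$ and hence has exactly $d$ positive parts, so saturated chains stay inside $\mcH_{c,d}$. (One small caveat: composing covering moves yields a \emph{sequence} of degenerations; to get a single flat family as the statement literally asserts, you need the stronger stratum-closure statement $\mathcal{L}_{\lambda'}\subseteq\overline{\mathcal{L}_{\lambda}}$ at each step, which is why the paper's Lemmas~\ref{LemmaDegenerationPeelingScrollColumn} and~\ref{LemmaDegenerationTwoJordanBlocks} record precisely that extra assertion.)

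There are, however, two genuine gaps. (a) In the sufficiency direction, the sentence ``$\bfM(\alpha)\simeq\bfM_\lambda$ up to a linear change of coordinates for $\alpha\neq 0$'' is not a routine observation but the entire content of the step: a badly chosen interpolation of the junction column could have every fiber of type $\lambda'$, and you also need the general fiber to have maximal codimension \emph{before} you may invoke Lemma~\ref{LemmaFlatnessHilbertFunction}. You must either exhibit the explicit column operations and coordinate change returning $\bfM(\alpha)$ to $\bfM_\lambda$ (as the paper does in its two degeneration lemmas), or compute a type-detecting invariant of $\bfM(\alpha)$. (b) In the necessity direction, the semicontinuity argument is applied to a family of sheaves on $\P^1$, but what a degeneration gives you is a flat family of homogeneous ideals (or subschemes of $\P^n$). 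To bridge this you must functorially and flatly recover the pencil from the ideal along the whole family --- for instance using that a $1$-generic $2\times(c+1)$ matrix is determined by $[I]_2$ up to the $\GL_2\times\GL_{c+1}$-action, or recovering the bundle from the ruling --- and verify that this recovery does not break at the special fiber. You correctly identify (b) as the main obstacle, but as written both (a) and (b) remain announcements of what must be proved rather than proofs.
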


It follows that,
if we restrict to  scrolls, 
there is a unique most degenerate rational normal scroll
in each $\mcH_{c,d}$,
namely,
 the one with the least balanced partition $(c-d+2,1^{d-1})$.
 
Our main result in this section is the existence of a unique most degenerate  2-determinantal ideal in each $\mcH_{c,d}$, up to linear changes of coordinates.

\begin{thm}\label{TheoremUniqueMinimumHcd}
For every ideal $I\in \mcH_{c,d}$, 
there is a sequence of  degenerations  to an ideal of KW type $(1^d;c-d+1)$.
Moreover, the locus $\mathcal{L}_{(1^d;c-d+1)} $
is the unique closed stratum of $ \mcH_{c,d}$, and it is a single $\mathrm{Aut}_\kk(S)$-orbit.
\end{thm}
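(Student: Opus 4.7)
The plan is to prove this in two parts. First, I would verify that $\mathcal{L}_{(1^d;c-d+1)}$ is a single $\mathrm{Aut}_\kk(S)$-orbit: any ideal with this KW type has a unique Jordan block, so by Lemma~\ref{LemmaTranslatingEigenvalues} its eigenvalue can be shifted to $0$, after which two normal forms differ only by a relabeling of variables. Second, and this is the bulk of the work, I would show that for every $(\lambda;\mu) \neq (1^d;c-d+1)$ the stratum $\mathcal{L}_{(\lambda;\mu)}$ admits a sequence of flat degenerations reaching $\mathcal{L}_{(1^d;c-d+1)}$. Closedness and uniqueness then follow formally: being a single orbit that sits in the closure of every other stratum of $\mcH_{c,d}$, the locus $\mathcal{L}_{(1^d;c-d+1)}$ is an $\mathrm{Aut}_\kk(S)$-orbit of minimal dimension and is therefore closed in $\mcH_{c,d}$, while any other closed stratum would have to contain it in its closure, contradicting the disjointness of strata.

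To produce the sequence of degenerations, I would introduce three elementary moves on KW types and realize each by a concrete one-parameter family of matrices $\bfM_t$. The first is \emph{Harris rebalancing}: Theorem~\ref{TheoremHarrisPoset} allows one to degenerate the scroll partition $\lambda$ to any less balanced partition of the same size while keeping the Jordan blocks fixed, so one may assume $\lambda = (a,1^{d-1})$ for some $a \geq 1$. The second is \emph{Jordan merging}: when $\mu$ has at least two parts, degenerate $(\lambda;\mu_1,\mu_2,\dots,\mu_e)$ to $(\lambda;\mu_1+\mu_2,\mu_3,\dots,\mu_e)$ by a family that simultaneously sends $\eps_1\to\eps_2$ and deforms the off-diagonal structure, so that in the limit the two blocks fuse into a single Jordan block of size $\mu_1+\mu_2$. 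The third is \emph{scroll-to-Jordan transfer}: when $\lambda=(a,1^{d-1})$ with $a\ge 2$ and $\mu=(b)$, degenerate to $(a-1,1^{d-1};b+1)$ via an explicit family shifting one column from the large scroll block into the Jordan block. Starting from an arbitrary $(\lambda;\mu)$, applying Jordan merging repeatedly reduces to a single Jordan block; alternating Harris rebalancing with scroll-to-Jordan transfer then reduces the scroll partition to $(1^d)$, terminating at $(1^d;c-d+1)$.

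Flatness of each family follows from Lemma~\ref{LemmaFlatnessHilbertFunction}, once one knows every member of the family lies in $\mcH_{c,d}$: the Eagon--Northcott resolution then forces a constant Hilbert function. The main obstacle is therefore the construction of the families in the second and third moves, and the verification that each special fiber $\bfM_0$ is projectively equivalent to a concatenation of the claimed blocks. The trickiest case is Jordan merging, since one cannot preserve codimension by merely colliding eigenvalues (Proposition~\ref{PropKWNormalFormMaximalCodimension} forbids this), so the family must carefully balance eigenvalue collision against a structural deformation of the entries. I would work out the small cases $(\mu_1,\mu_2)=(1,1)$ and $(2,1)$ by hand first, in order to extract the correct interpolation pattern before writing down the general family.
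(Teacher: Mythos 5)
Your high-level architecture matches the paper's: realize elementary moves on KW types by explicit one-parameter families of matrices, get flatness from the constancy of the Hilbert function (Lemma \ref{LemmaFlatnessHilbertFunction} plus Proposition \ref{PropKWNormalFormMaximalCodimension}), obtain the single-orbit claim from Lemma \ref{LemmaTranslatingEigenvalues}, and deduce closedness from the fact that an orbit of minimal dimension is closed. You also correctly diagnose the central difficulty in Jordan merging, namely that naively colliding eigenvalues drops the codimension, so the collision must be coupled with a structural deformation of the matrix entries. However, the proposal has a genuine gap: the explicit families realizing Jordan merging and the scroll-to-Jordan move are never written down, and you explicitly defer their construction. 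These families \emph{are} the proof --- in the paper they occupy the two lemmas preceding the theorem, and each requires a nontrivial verification that the $t\ne 0$ fibers can be brought back to the original normal form by column operations and coordinate changes (for merging, the paper uses the family with eigenvalues $0$ and $t\eps$ and corner entry $(1-t)y_1$, and the $t\ne 0$ analysis is an iterated clearing argument). A plan that says ``I would work out the small cases to extract the interpolation pattern'' does not establish that such families exist in general, so the statement is not proved.

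Two further points worth flagging. First, your three-move scheme relies on ``Harris rebalancing while keeping the Jordan blocks fixed,'' but Theorem \ref{TheoremHarrisPoset} as stated concerns only scrolls; extending it to concatenations with Jordan blocks attached is plausible but is an additional claim requiring justification. The paper avoids Harris entirely: its column-peeling move degenerates $(\lambda;\mu)$ to $(\lambda';\mu,1)$, creating a \emph{new} size-one Jordan block rather than feeding an existing one, and iterating this single move already reduces any scroll partition to $(1^d)$; your scroll-to-Jordan transfer is then the composition of that move with a merge. Second, your Move 3 as stated assumes $\mu=(b)$ is nonempty, so the pure-scroll case $\mu=\emptyset$ (e.g.\ the balanced scroll itself) is not covered by the moves you list and needs the variant that creates the first Jordan block from scratch.
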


In order to prove Theorem \ref{TheoremUniqueMinimumHcd}, we  construct two basics degenerations.

\begin{lemma} \label{LemmaDegenerationPeelingScrollColumn} 
Let $\lambda$ be a partition, 
and let 
 $\lambda'$ be a partition obtained from $\lambda$ by replacing one part $\lambda_j>1$ with $\lambda_j-1$.
An ideal with KW type $(\lambda;\mu)$ degenerates 
to an ideal with KW type $(\lambda';\mu,1)$. 
Moreover, we have $\mathcal{L}_{(\lambda';\mu,1)}\subseteq \overline{\mathcal{L}_{(\lambda;\mu)}}$. 
\end{lemma}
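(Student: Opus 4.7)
The plan is to construct a one-parameter flat family of $2\times(c+1)$ matrices $\bfM(t)$ over $\kk[t]$ whose specialization $\bfM_\alpha$ at $\alpha\neq 0$ has KW type $(\lambda;\mu)$ and whose specialization $\bfM_0$ has KW type $(\lambda';\mu,1)$; the entire deformation will take place inside the single scroll block $\bfS_j$ of size $p:=\lambda_j\geq 2$, leaving the other blocks untouched.

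Fix a KW normal form $\bfM=(\bfS_1|\cdots|\bfS_d|\bfJ_1|\cdots|\bfJ_e)$ of $I$ and write $\bfS_j$ in the variables $x_1,\dots,x_{p+1}$. By first applying Lemma \ref{LemmaTranslatingEigenvalues} with a suitable $\delta\in\kk$, one may assume that $0$ does not appear among the eigenvalues of $\bfJ_1,\dots,\bfJ_e$; this step replaces $\bfM$ by a projectively equivalent KW normal form and preserves the KW type of $I$. Now replace $\bfS_j$ with
\[
\bfS_j(t)=\begin{pmatrix} x_1 & x_2 & x_3 & \cdots & x_p \\ t\,x_2 & x_3 & x_4 & \cdots & x_{p+1} \end{pmatrix}
\]
and leave the other blocks unchanged, yielding the family $\bfM(t)$. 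For $\alpha\neq 0$, the linear automorphism of $S$ sending $x_1\mapsto x_1/\alpha$ and fixing the other variables carries $I_2(\bfS_j)$ onto $I_2(\bfS_j(\alpha))$ after rescaling the relevant minors by the unit $\alpha$; since this substitution does not touch the other blocks, $\bfM_\alpha$ is projectively equivalent to $\bfM$ and so has KW type $(\lambda;\mu)$. At $t=0$, the first column of $\bfS_j(0)$ becomes $(x_1,0)^T$, a Jordan block of size $1$ with eigenvalue $0$ in the variable $x_1$, while the remaining columns form a scroll block of size $p-1$ in the disjoint variables $x_2,\dots,x_{p+1}$; after rearranging, $\bfM_0$ is a KW normal form of type $(\lambda';\mu,1)$ with distinct eigenvalues thanks to the preliminary shift.

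Flatness then comes for free: by Proposition \ref{PropKWNormalFormMaximalCodimension}, every $I_2(\bfM_\alpha)$ is a 2-determinantal ideal of codimension $c$, hence has a Hilbert function independent of $\alpha$ via the Eagon--Northcott resolution, and Lemma \ref{LemmaFlatnessHilbertFunction} applies, proving the first assertion. The inclusion $\mathcal{L}_{(\lambda';\mu,1)}\subseteq\overline{\mathcal{L}_{(\lambda;\mu)}}$ follows because the construction is $\mathrm{Aut}_\kk(S)$-equivariant: by varying the starting KW form $\bfM$ across $\mathcal{L}_{(\lambda;\mu)}$ and combining with Lemma \ref{LemmaTranslatingEigenvalues} to freely prescribe the eigenvalue of the new Jordan block, any ideal in $\mathcal{L}_{(\lambda';\mu,1)}$ can be realized as a special fiber of such a family. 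The principal technical step is thus coming up with $\bfS_j(t)$ itself; once this minimal one-entry perturbation is written down, the identification of the generic and special fibers, flatness, and the closure statement are all immediate.
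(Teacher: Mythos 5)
Your proof is correct and follows essentially the same strategy as the paper's: a one-entry perturbation of the single scroll block $\bfS_j$ that at $t=0$ splits off an end column as a Jordan block of size $1$, with flatness obtained from constancy of the Hilbert function (via Proposition \ref{PropKWNormalFormMaximalCodimension} and Lemma \ref{LemmaFlatnessHilbertFunction}) and the closure statement obtained by normalizing eigenvalues through Lemma \ref{LemmaTranslatingEigenvalues}. The only (cosmetic) differences are that the paper perturbs the top entry of the \emph{last} column to $t x_p + \eps x_{p+1}$, so the new size-$1$ Jordan block has eigenvalue $\eps^{-1}$ and no preliminary eigenvalue shift is needed, whereas you perturb the bottom entry of the \emph{first} column and get eigenvalue $0$ after a preliminary shift; your identification of the generic fiber by rescaling $x_1$ is, if anything, slightly cleaner than the paper's sequence of column operations.
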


\begin{proof}
Let $\bfM = 
(\,\bfS_1 \,|\, \cdots \,|\, \bfS_d \,|\, \bfJ_1 \,|\, \cdots \,|\, \bfJ_e )  $ be a concatenation of scroll blocks $\bfS_i$ of size $\lambda_i$ and Jordan blocks 
$\bfJ_i$ of size $\mu_i$ with distinct eigenvalues $\eps_i$.
For simplicity, we rename $p = \lambda_j$.
Consider the $j$-th scroll block
$$
\bfS_j = \begin{pmatrix}
x_1 & x_2 & \cdots & x_{p-1} & x_{p} \\
x_2 & x_3 & \cdots & x_{p} & x_{p+1} 
\end{pmatrix}.
$$
Let $\eps \in \kk$ be such that $\eps \ne 0$ and $\eps^{-1} \ne \eps_i $ for all $i$.
Set
$$
\bfS_j(t) = \begin{pmatrix}
x_1 & x_2 & \cdots & x_{p-1} & t x_{p} +   \eps x_{p+1}\\
x_2 & x_3 & \cdots & x_{p} & x_{p+1} 
\end{pmatrix}
$$
and
$\bfM(t) = 
(\,\bfS_1 \,|\, \cdots \,|\, \bfS_j(t) \,|\, \cdots \,|\, \bfS_d \,|\, \bfJ_1 \,|\, \cdots \,|\, \bfJ_e )$.
We claim that the family of ideals 
$ I_2(\bfM(t)) \subseteq S[t]$ is flat, and  that the
KW type of $I_2(\bfM(t))$ is
$(\lambda;\mu)$
for $t \ne 0$ 
and 
$(\lambda';\mu,1)$
 for $t=0$.

For $t=0$, the change of coordinates $x_{p+1}\mapsto \eps^{-1} x_{p+1}$  transforms  
$\bfS_j(0)$ into a concatenation of a scroll block of size $p -1$ and a Jordan block of size 1 with eigenvalue $\eps^{-1}$. Thus, $I_2(\bfM(0))$ has the desired KW type $(\lambda';\mu,1)$.

For $t \ne 0$, 
the  change of coordinates $x_i \mapsto \frac{1}{t}(x_i - \eps x_{i+1})$ 
for $i = 1, \ldots, p$ transforms  $\bfS_j(t)$ to 
\begin{equation}\label{EqScrollBlockColumnOps}
\begin{pmatrix}
\frac{1}{t}(x_1 - \eps x_2) & \frac{1}{t}(x_2 - \eps x_3)& \cdots &  \frac{1}{t}(x_{{p}-1} - \eps x_{p})		& x_{p}  \\
\frac{1}{t}(x_2 - \eps x_3) & \frac{1}{t}(x_3 - \eps x_4) & \cdots & \frac{1}{t}(x_{p} - \eps x_{{p}+1})		 & x_{{p}+1} 
\end{pmatrix}.
\end{equation}
Denoting by $\bfC_i$ the $i$-th column of \eqref{EqScrollBlockColumnOps},
we perform
the column operations $\bfC_i \mapsto t \bfC_i + \eps \bfC_{i+1}$
for  $i = p-1, p-2, \ldots, 1,$
transforming the block \eqref{EqScrollBlockColumnOps} into the original scroll block $\bfS_j$.
We conclude that $I_2(\bfM(t))$ has KW type 
$(\lambda;\mu)$ for all $t \ne 0$.

The flatness of the family $I_2(\bfM(t))$ follows from Lemma \ref{LemmaFlatnessHilbertFunction}:
all ideals in the family have the same  Hilbert function
since they are 2-determinantal ideals with the same codimension (Proposition \ref{PropKWNormalFormMaximalCodimension}).

The last statement of the lemma follows by the same argument.
Indeed, 
 given any 2-determinantal ideal $I \in \mathcal{L}_{(\lambda';\mu,1)}$,
we may assume, by Lemma \ref{LemmaTranslatingEigenvalues}, that all the eigenvalues in a  Kronecker-Weirstrass normal form are non-zero.
Now, the argument above shows that $I$ is a flat limit of ideals in $\mathcal{L}_{(\lambda;\mu)}$.
\end{proof}

\begin{lemma} \label{LemmaDegenerationTwoJordanBlocks} 
Let $\mu$ be a partition, 
and let 
 $\mu'$ be a partition obtained from $\mu$ by replacing two parts $\mu_h,\mu_k$ with one part $\mu_h+\mu_k$.
An ideal with KW type $(\lambda;\mu)$ degenerates 
to an ideal with KW type $(\lambda;\mu')$. 
Moreover,  we have $\mathcal{L}_{(\lambda;\mu')}\subseteq\overline{\mathcal{L}_{(\lambda;\mu)}}$.
\end{lemma}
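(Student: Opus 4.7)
The approach mirrors Lemma \ref{LemmaDegenerationPeelingScrollColumn}: construct an explicit one-parameter family $\bfM(t)$ of $2\times(c+1)$ matrices that specializes at $t=0$ to a KW normal form of type $(\lambda;\mu')$ and at $t=1$ to a KW normal form of type $(\lambda;\mu)$, leaving all other blocks unchanged. By Lemma \ref{LemmaTranslatingEigenvalues}, I may assume the eigenvalue of $\bfJ_h$ equals $0$; call $\eps$ the eigenvalue of $\bfJ_k$. After permuting columns (which preserves $I_2$), I may further assume that $\bfJ_h$ and $\bfJ_k$ are adjacent. Writing $p=\mu_h$, $q=\mu_k$ and denoting by $w_1,\dots,w_{p+q}$ the variables appearing in these two blocks, the plan is to replace $\bfJ_h\,|\,\bfJ_k$ with
\[
\bfJ(t)=\begin{pmatrix} w_1 & \cdots & w_{p-1} & w_p & w_{p+1} & \cdots & w_{p+q-1} & w_{p+q} \\ w_2 & \cdots & w_p & (1-t)w_{p+1} & w_{p+2}+t\eps w_{p+1} & \cdots & w_{p+q}+t\eps w_{p+q-1} & t\eps w_{p+q} \end{pmatrix}.
\]
At $t=1$ this coincides with the original concatenation $\bfJ_h\,|\,\bfJ_k$ (a size-$p$ block of eigenvalue $0$ followed by a size-$q$ block of eigenvalue $\eps$), while at $t=0$ it becomes a single Jordan block of size $p+q$ with eigenvalue $0$; together with the unchanged blocks, this realizes both the $(\lambda;\mu)$ and $(\lambda;\mu')$ KW normal forms.

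Flatness of $I_2(\bfM(t))$ over an open subset $B\subseteq\AA^1$ containing $\{0,1\}$ follows from Lemma \ref{LemmaFlatnessHilbertFunction}: since the Hilbert function of a 2-determinantal ideal is determined by its codimension, it is enough to verify $\codim I_2(\bfM(t))=c$ on an open set. The Eagon-Northcott bound already gives $\codim I_2(\bfM(t))\le c$ for all $t$, and the locus where equality holds is open, containing both $t=0$ and $t=1$ by Proposition \ref{PropKWNormalFormMaximalCodimension}; hence its complement is finite. Upper semicontinuity of the KW-type stratification combined with the explicit form at $t=1$ then identifies the generic fiber as $(\lambda;\mu)$, while the fiber at $t=0$ lies in $\mathcal{L}_{(\lambda;\mu')}$ by construction.

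The closure statement $\mathcal{L}_{(\lambda;\mu')}\subseteq\overline{\mathcal{L}_{(\lambda;\mu)}}$ follows from the same construction: any $I\in\mathcal{L}_{(\lambda;\mu')}$ has the form $\varphi(I_2(\bfM_0))$ for some linear automorphism $\varphi$ and KW normal form $\bfM_0$ of type $(\lambda;\mu')$; applying $\varphi$ to the family built around $\bfM_0$ expresses $I$ as a flat limit of ideals of KW type $(\lambda;\mu)$.

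The key conceptual step is the choice of interpolation so that both endpoints simultaneously lie in KW normal form: decreasing the $(2,p)$-entry from $w_{p+1}$ to $0$ linearly in $t$ ``closes off'' the first block at $t=1$, while activating the eigenvalue $\eps$ in positions $(2,p+1),\dots,(2,p+q)$ switches on the second Jordan block. I expect the main obstacle to be confirming the KW type of the generic fiber; however, this reduces to the semicontinuity argument above together with the direct analysis at $t=1$, avoiding any messy attempt to put $\bfJ(t)$ into KW normal form for arbitrary $t$.
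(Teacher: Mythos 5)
Your one-parameter family is exactly the one the paper uses (with $w_1,\dots,w_p$ and $w_{p+1},\dots,w_{p+q}$ playing the roles of the paper's $x_i$ and $y_j$), and your identification of the fiber at $t=0$ as a single Jordan block of size $p+q$ with eigenvalue $0$ is correct. Your flatness argument is also sound, and in fact slightly cleaner than simply asserting constancy of the codimension: the Eagon--Northcott bound gives $\codim I_2(\bfM(t))\leq c$ everywhere, projectivity makes the fiber dimension upper semicontinuous in $t$, so the locus $\{\codim = c\}$ is open and contains $0$ and $1$; on that locus the Hilbert function is determined by $c$ and Lemma \ref{LemmaFlatnessHilbertFunction} applies.

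The genuine gap is the identification of the \emph{generic} fiber as having KW type $(\lambda;\mu)$. You deduce this from ``upper semicontinuity of the KW-type stratification combined with the explicit form at $t=1$,'' but no such semicontinuity is available at this point --- the closure relations among the strata $\mathcal{L}_{(\lambda;\mu)}$ are precisely what Section \ref{SectionDegenerations} is in the process of establishing, so invoking them here is circular. Even granting that each stratum is constructible, the generic fiber lies in a single stratum attained on a dense open subset of the parameter line, and the type at the single closed point $t=1$ carries no information about that generic stratum: a priori $t=1$ could be a special point where the type jumps, exactly as $t=0$ is a special point of the family in Lemma \ref{LemmaDegenerationPeelingScrollColumn} (and note that $\bfJ(t)$ genuinely changes shape at $t=1$, where the entry $(1-t)w_{p+1}$ vanishes). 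The ``messy'' computation you hope to avoid is therefore the essential content of the proof: the paper shows by explicit column operations and coordinate changes --- repeatedly using the columns of $\bfJ_k(t)$ to clear the entry $(1-t)w_{p+1}$ and push the resulting contamination up through $\bfJ_h(t)$ --- that for every $t$ with $t\neq 0$ and $t\eps\neq\eps_i$ ($i\neq h,k$) the matrix $\bfM(t)$ is equivalent to the original $\bfM$, so the type is $(\lambda;\mu)$ on a dense open set. Without that (or some substitute argument pinning down the generic type), both the degeneration statement and the closure statement $\mathcal{L}_{(\lambda;\mu')}\subseteq\overline{\mathcal{L}_{(\lambda;\mu)}}$ remain unproved.
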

\begin{proof}
Let $\bfM = 
(\,\bfS_1 \,|\, \cdots \,|\, \bfS_d \,|\, \bfJ_1 \,|\, \cdots \,|\, \bfJ_e )  $ be a concatenation of scroll blocks $\bfS_i$ of size $\lambda_i$ and Jordan blocks 
$\bfJ_i$ of size $\mu_i$ with distinct eigenvalues $\eps_i$.
By Lemma \ref{LemmaTranslatingEigenvalues},
 we may assume $\eps_h=0$, and for simplicity we rename $\eps = \eps_k,\, p = \mu_h,$ and $q = \mu_k$.
Consider the two Jordan blocks 
$$
\bfJ_h = \begin{pmatrix}
x_1 &  \cdots & x_{p-1} & x_{p} \\
x_2 &  \cdots & x_{p} & 0
\end{pmatrix} \,\,\, \text{and} \,\,\,\,
\bfJ_k = \begin{pmatrix}
y_1 &  \cdots & y_{q-1} & y_{q} \\
y_2 +\eps y_1 &  \cdots & y_{q} +\eps y_{q-1} & \eps y_{q}
\end{pmatrix}.
$$
We modify them setting
$$
\bfJ_h(t) =
 \begin{pmatrix}
x_1 & \cdots & x_{p-1} & x_{p} \\
x_2 &\cdots & x_{p} & (1-t) y_1
\end{pmatrix} \,\,\, \text{and} \,\,\,\,
\bfJ_k(t) = 
\begin{pmatrix}
y_1 &  \cdots & y_{q-1} & y_{q} \\
y_2 +t\eps y_1 & \cdots & y_{q} +t\eps y_{q-1} & t\eps y_{q}
\end{pmatrix}.
$$
Let
$\bfM(t) = 
(\,\bfS_1 \,|\, \cdots \,|\,  \bfS_d \,|\, \bfJ_1 \,|\, \cdots \,|\,
\bfJ_h(t) \,|\, \cdots \,|\,
\bfJ_k(t) \,|\, \cdots \,|\,
 \bfJ_e )$
 be the corresponding concatenation.
 As in Lemma \ref{LemmaDegenerationPeelingScrollColumn},
 we claim that $I_2(\bfM(t)) \subseteq S[t]$ is a flat family yielding the conclusion of the lemma.

For $t=0$, the ideal $I_2(\bfM(0))$ is clearly of KW type $(\lambda;\mu')$,
since the matrix $(\,\bfJ_h(0)\,|\,\bfJ_k(0)\,)$ is a single Jordan block with eigenvalue 0 and size $p+q$.

Now let  $t \ne 0$.
By repeatedly subtracting  multiples of the columns of $\bfJ_k(t)$ from the last column of $\bfJ_h(t)$,
we transform the matrix $\bfJ_h(t)$ into
$$
 \begin{pmatrix}
x_1 & \cdots & x_{p-1} & x_{p} + \sum_{i=1}^{q} \alpha_i y_i \\
x_2 &\cdots & x_{p} & 0
\end{pmatrix}
$$
for suitable  $\alpha_i \in \kk$.
Applying the change of coordinates $x_p \mapsto x_p - \sum_{i=1}^q \alpha_i y_q $ we obtain
$$
\begin{pmatrix}
x_1  & \cdots &x_{p-1} 	& x_{p}   \\
x_2  & \cdots & x_p- \sum_{i=1}^q \alpha_i y_q	 & 0
\end{pmatrix}.
$$
Again,
subtracting multiples of the columns of $\bfJ_k(t)$
 from the second last column of this matrix we get
$$
\begin{pmatrix}
x_1  & \cdots &x_{p-1} + \sum_{i=1}^q \alpha_i' y_i	 	& x_{p}  \\
x_2  & \cdots & x_p  & 0
\end{pmatrix}
$$
for suitable $\alpha_i' \in \kk$. 
Repeating this process of changing coordinates and column operations, we see that we can transform the  matrix $\bfJ_h(t)$ into
$$
\begin{pmatrix}
x_1 + \sum_{i=1}^q \alpha_i'' y_i	  & \cdots &x_{p-1} 	& x_{p}  \\
x_2  & \cdots & x_p  & 0
\end{pmatrix}. 
$$
A final change of coordinates $x_1 \mapsto x_1 - \sum_{i=1}^q \alpha_i''y_i$ 
restores the original block $\bfJ_h$.
We conclude that 
$I_2(\bfM(t))$ is of KW type $(\lambda;\mu)$ for 
 general $t \in \kk$, that is, for $t \in \kk$ such that $t \ne 0$ and $t\eps \ne \eps_i$ for all $i \ne h,k$.
Flatness follows as in Lemma \ref{LemmaDegenerationPeelingScrollColumn} by Lemma \ref{LemmaFlatnessHilbertFunction} and Proposition \ref{PropKWNormalFormMaximalCodimension}.

The last statement of the lemma also follows as in Lemma \ref{LemmaDegenerationPeelingScrollColumn}:
for any $I \in \mathcal{L}_{(\lambda;\mu')}$,
we may assume, by Lemma \ref{LemmaTranslatingEigenvalues}, that the eigenvalue corresponding to the Jordan block of size $\mu_h+\mu_k$ is 0,
and the argument above shows that $I$ is a flat limit of ideals in $\mathcal{L}_{(\lambda;\mu)}$.
\end{proof}

\begin{proof}[Proof of Theorem \ref{TheoremUniqueMinimumHcd}]
Applying Lemmas \ref{LemmaDegenerationPeelingScrollColumn} and \ref{LemmaDegenerationTwoJordanBlocks}
repeatedly, it follows that for any ideal $I\in \mcH_{c,d}$ there is a sequence of degenerations 
to an  ideal of KW type $(1^d;c-d+1)$.
Consider the action of $\mathrm{Aut}_\kk(S)$ on $\Hilb^{p(\zeta)}(\P^n)$.
It restricts to an action on $\mcH_{c,d}$.
It follows by Lemma  \ref{LemmaTranslatingEigenvalues} that  $\mathcal{L}_{(1^d;c-d+1)}$ is a single orbit of 
$\mathrm{Aut}_\kk(S)$.
By Lemmas \ref{LemmaDegenerationPeelingScrollColumn} and \ref{LemmaDegenerationTwoJordanBlocks},
all orbits different from $\mathcal{L}_{(1^d;c-d+1)}$ are not closed in $\mcH_{c,d}$.
Since orbits of minimal dimension are closed, it follows that $\mathcal{L}_{(1^d;c-d+1)}$ is  closed in $\mcH_{c,d}$.
\end{proof}

We now discuss the main application of the results of this section to the study of blowup algebras.
In general, 
given a flat family of ideals, 
their Rees algebras or the special fiber rings need not form flat families.
However, this  turns out to be true for 2-determinantal ideals $I_2(\bfM)$, as long as the number $\codim\, I_1(\bfM)$ is constant along the family.

\begin{lemma}\label{LemmaConstantHilbertFunctionBlowups}
Let $I \in \mcH_{c,d}$.
The Hilbert functions of $\mcR(I)$ and $\mcF(I)$
depend only on $c,d$.
\end{lemma}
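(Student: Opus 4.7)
The plan is to reduce the lemma to invariance of the Hilbert function of the powers $I^b$, and then argue via the canonical degeneration of Theorem \ref{TheoremUniqueMinimumHcd}. In the bigrading with $\deg(x_i)=(1,0)$ and $\deg(\tau)=(-2,1)$, one has natural isomorphisms
\[
[\mcR(I)]_{(a,b)} \cong [I^b]_{a+2b} \quad\text{and}\quad [\mcF(I)]_b \cong [I^b]_{2b},
\]
so it suffices to show that $\dim_\kk [I^b]_j$ depends only on $c,d,b,j$ for every $I \in \mcH_{c,d}$. For $b=0$ this is trivial, and for $b=1$ every such $I$ carries the same Eagon-Northcott free resolution of $S/I$, whose Betti numbers depend only on $c$ and $n=c+d$.

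For $b \geq 2$, I would invoke Theorem \ref{TheoremUniqueMinimumHcd} to connect any $I \in \mcH_{c,d}$ to the canonical ideal $L$ of KW type $(1^d;c-d+1)$ through a finite chain of basic degenerations (from Lemmas \ref{LemmaDegenerationPeelingScrollColumn} and \ref{LemmaDegenerationTwoJordanBlocks}) interspersed with linear automorphisms of $S$. Linear automorphisms preserve the Hilbert function of each $I^b$, so the task reduces to checking that each basic degeneration preserves $\dim_\kk [I^b]_j$. For a basic degeneration $\tilde{\mcI} = I_2(\bfM(t)) \subseteq S\otimes_\kk B$, Lemma \ref{LemmaFlatnessHilbertFunction} tells us it is enough to show that each power $\tilde{\mcI}^b$ is flat over $B$.

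The main obstacle is this flatness claim for powers, since flatness of $\tilde{\mcI}$ does not in general propagate to $\tilde{\mcI}^b$. I would exploit the theorem of \cite{BrunsConcaVarbaro} that every 2-determinantal ideal is of fiber type and that each power $I^b$ has a linear $S$-resolution. Combined with the observation that the linear syzygies of $I_2(\bfM(t))$ arise from the Eagon-Northcott complex of $\bfM(t)$, whose shape varies uniformly with $t$, this should provide a uniform $B$-presentation of the relative Rees algebra $\bigoplus_b \tilde{\mcI}^b$ modulo the relative special fiber. Flatness of the individual pieces $\tilde{\mcI}^b$ would then follow from a dimension count using the linearity of the resolutions and the uniformity of the Eagon-Northcott data, yielding the desired constancy of $\dim_\kk [I^b]_j$ along each basic degeneration and thereby concluding the proof.
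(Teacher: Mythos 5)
Your opening reduction is correct and matches the paper: since $[\mcR(I)]_{(a,b)} \cong [I^b]_{a+2b}$ and $[\mcF(I)]_b \cong [I^b]_{2b}$, it suffices to show that $\dim_\kk [I^b]_j$ depends only on $c,d,b,j$. But the route you take from there has a genuine gap. The entire argument for $b \geq 2$ hinges on the flatness of each power $\tilde{\mcI}^b = I_2(\bfM(t))^b$ over $B$ along the basic degenerations, and you never prove this; you only say it ``should'' follow from a uniform presentation and a dimension count. As you yourself note, flatness of $\tilde{\mcI}$ does not propagate to $\tilde{\mcI}^b$, and the ingredient you invoke to bridge this --- ``the uniformity of the Eagon-Northcott data'' --- only controls the resolution of the first power; the resolutions of $I^b$ for $b \geq 2$ are not Eagon-Northcott complexes. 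Note also that the paper's Corollary \ref{CorDegenerationBlowupAlgebras}, which would give you exactly the flatness of the families of powers you need, is itself deduced \emph{from} this lemma, so you cannot lean on the degeneration machinery of Section \ref{SectionDegenerations} here without running the logic backwards.

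The deeper issue is that the detour is unnecessary, because the result you partially cite already finishes the proof. The paper's argument is two lines: by \cite[Theorem 3.7]{BrunsConcaVarbaro}, the graded Betti numbers of every power $I^k$ of a 2-determinantal ideal are uniquely determined by $c$, $d$, and $k$ (not merely the fact that the resolution is linear). Since the graded Betti numbers determine the Hilbert function of $I^k$ via the alternating sum over the free resolution, $\dim_\kk[I^k]_j$ depends only on $c,d,k,j$, and the claim for $\mcR(I)$ and $\mcF(I)$ follows immediately from your own reduction --- no degeneration, no flatness of powers, no case split on $b$. If you want to salvage your approach, you would need to state and prove the flatness of $\bigoplus_b \tilde{\mcI}^b$ over $B$ as an independent fact, but the only plausible proof of that again goes through the constancy of the Betti numbers of the powers, i.e., through the very theorem that makes the direct argument work.
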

\begin{proof}
By \cite[Theorem 3.7]{BrunsConcaVarbaro},
 the graded Betti numbers of  each power $I^k$ are uniquely determined by $c,d,k$.
It follows that the Hilbert functions of $\mcR(I)$ and $\mcF(I)$ are uniquely determined by $c,d$.
\end{proof}

\begin{cor}\label{CorDegenerationBlowupAlgebras}
Let $I,J \in \mcH_{c,d}$ be ideals such that 
 $I$ degenerates to $J$. 
 Then, $\mcR(I)$ degenerates to $\mcR(J)$ and $\mcF(I)$ degenerates to $\mcF(J)$.
\end{cor}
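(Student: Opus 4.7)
The plan is to lift the given degeneration of ideals $I\leadsto J$ to a degeneration of blowup algebras by performing the Rees construction relative to the base $B=\kk[t]_f$, then checking flatness via Hilbert-function constancy. To begin, realize the hypothesis as a flat family $S_B/\mathcal{I}$ for a homogeneous ideal $\mathcal{I}\subseteq S_B=S\otimes_\kk B$ with $\mathcal{I}_0=J$ and $\mathcal{I}_\alpha$ isomorphic to $I$ (up to a linear change of coordinates on $S$) for each $\alpha\ne 0$; in particular every power $\mathcal{I}_\alpha^k$ is isomorphic to either $I^k$ or $J^k$.

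The key input is a strengthening of Lemma \ref{LemmaConstantHilbertFunctionBlowups}: via \cite[Theorem 3.7]{BrunsConcaVarbaro}, the Hilbert function of the $k$-th power of any ideal in $\mcH_{c,d}$ depends only on $c$, $d$, and $k$. Consequently the Hilbert function of $\mathcal{I}_\alpha^k$ is constant in $\alpha$ for every $k$, and Lemma \ref{LemmaFlatnessHilbertFunction} applied to the graded $B$-algebra $S_B/\mathcal{I}^k$ yields that $S_B/\mathcal{I}^k$ is $B$-flat for every $k\ge 0$. The short exact sequence
\[
0\longrightarrow \mathcal{I}^k\longrightarrow S_B\longrightarrow S_B/\mathcal{I}^k\longrightarrow 0,
\]
together with the $B$-flatness of $S_B$, now forces $\mathcal{I}^k$ to be $B$-flat and, upon tensoring with $\kk(\alpha)$ over $B$, identifies $\mathcal{I}^k\otimes_B\kk(\alpha)$ with $\mathcal{I}_\alpha^k\subseteq S$.

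With the powers under control, I would form the relative Rees algebra $\tilde{\mathcal{R}}:=\bigoplus_{k\ge 0}\mathcal{I}^k\tau^k\subseteq S_B[\tau]$, a $B$-flat bigraded algebra. Summing the fiber identifications above over $k$ gives $\tilde{\mathcal{R}}\otimes_B\kk(\alpha)\cong\bigoplus_k\mathcal{I}_\alpha^k\tau^k=\mcR(\mathcal{I}_\alpha)$, so the fibers at $\alpha\ne 0$ and at $\alpha=0$ are $\mcR(I)$ and $\mcR(J)$, giving the sought degeneration. The statement for special fiber rings is then handled by the same argument applied to the bidegree-$(0,\ast)$ part $\tilde{\mathcal{F}}:=\bigoplus_k [\mathcal{I}^k]_{2k}\tau^k\subseteq\tilde{\mathcal{R}}$, which inherits $B$-flatness and whose fibers are $\mcF(\mathcal{I}_\alpha)$.

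The main conceptual obstacle is that for a general flat family of ideals the powers need not form a flat family, and so the Rees construction fails to commute with arbitrary base change. The uniform behaviour of all powers within $\mcH_{c,d}$, guaranteed by \cite{BrunsConcaVarbaro}, is exactly what removes this obstruction and reduces the corollary to the flatness check described above.
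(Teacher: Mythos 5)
Your proposal is correct and follows essentially the same route as the paper: form the relative Rees algebra over $B$ and deduce flatness from the constancy of Hilbert functions guaranteed by \cite[Theorem 3.7]{BrunsConcaVarbaro} (Lemma \ref{LemmaConstantHilbertFunctionBlowups}) together with Lemma \ref{LemmaFlatnessHilbertFunction}. The only cosmetic difference is that you verify flatness degree-by-degree in $k$ via the powers $\mathcal{I}^k$, whereas the paper applies the Hilbert-function criterion directly to the one-parameter family of Rees algebras; the underlying input is identical, and your version usefully spells out why the Rees construction commutes with the specialization here.
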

\begin{proof}
A one-parameter family of ideals, as defined in Section \ref{SectionPreliminaries}, determines a one-parameter family of their Rees algebras,  likewise for the special fiber rings. 
The claim follows from  Lemmas \ref{LemmaConstantHilbertFunctionBlowups} and \ref{LemmaFlatnessHilbertFunction}.
\end{proof}

While Theorem \ref{TheoremUniqueMinimumHcd} and Corollary \ref{CorDegenerationBlowupAlgebras} are sufficient for 
our investigation of the blowup algebras,
it is natural to ask for the full poset of degenerations
among KW types  in $\mcH_{c,d}$,
that is, the complete extension of Theorem \ref{TheoremHarrisPoset} to all 2-determinantal ideals.
It is possible to show that this  poset is precisely the one generated by the degenerations of Theorem \ref{TheoremHarrisPoset}, Lemma \ref{LemmaDegenerationPeelingScrollColumn}, and Lemma \ref{LemmaDegenerationTwoJordanBlocks}.
In other words, 
an ideal
of KW type
$(\lambda_1, \ldots, \lambda_d; \mu_1, \ldots, \mu_p)$ 
degenerates to one of KW type
$(\lambda'_1, \ldots, \lambda'_d; \mu'_1, \ldots, \mu'_q)$
if and only if there exist $a_1, \ldots, a_d\in \N$ such that
\begin{itemize}
\item  $a_i <\lambda_i$ for all $i=1,\ldots,d$ and $a =\sum_{i=1}^d a_i = \sum_{i=1}^d  \lambda_i -\sum_{i=1}^d  \lambda'_i$,
\item the partition $\lambda'' $ obtained by sorting $(\lambda_1-a_1, \ldots, \lambda_d-a_d)$
is more balanced than $\lambda'$,
\item the  partition $\mu'' = (\mu_1, \ldots, \mu_p, 1^a)$
is a refinement of $\mu'$, in the sense that there exists a set partition $\mathcal{P}_1, \ldots, \mathcal{P}_q$ of $\{1, \ldots, p+a\}$ such that $\mu'_j = \sum_{i \in \mathcal{P}_j} \mu''_i$ for all $j=1, \ldots, q$.
\end{itemize}
Again, the condition is sufficient by 
Theorem \ref{TheoremHarrisPoset} and Lemmas \ref{LemmaDegenerationPeelingScrollColumn}, \ref{LemmaDegenerationTwoJordanBlocks},
whereas a formal proof of the necessity would take us too far away from  the scope of this paper and therefore we omit it.

We end the section by illustrating the case of $\mcH_{6,3}$, i.e., the poset of degenerations of a $3$-dimensional balanced rational normal scroll of degree $7$.
\begin{center}
\begin{tikzpicture}
    \node (a1) at (0,0) {$(3,2^2;\emptyset)$};
    \node (a2) at (1.5,-1.5) {$(3^2,1;\emptyset)$};
    \node (a3) at (1.5,-3) {$(4,2,1;\emptyset)$};
    \node (a4) at (1.5,-4.5) {$(5,1^2;\emptyset)$};
    %%%%%%%%%%%%%%%%%%%%%%%
    \node (b1) at (-1.5,-1.5)  {$(2^3;1)$};
    \node (b2) at (-1.5,-4.5)  {$(3,2,1;1)$};
    \node (b3) at (1.5,-6)  {$(4,1^2;1)$};
    %%%%%%%%%%%%%%%%%%%%%%%
    \node (c1) at  (-1.5,-6) {$(2^2,1;1^2)$};
    \node (c2) at (-3,-7.5) {$(2^2,1;2)$};
    \node (c3) at (0,-7.5) {$(3,1^2;1^2)$};
    \node (c4) at (3,-9) {$(3,1^2;2)$};
    %%%%%%%%%%%%%%%%%%%%%%%
    \node (d1) at  (-1.5,-9) {$(2,1^2;1^3)$};
    \node (d2) at (-3,-10.5) {$(2,1^2;2,1)$};
    \node (d3) at (-4.5,-12) {$(2,1^2;3)$};
    %%%%%%%%%%%%%%%%%%%%%%%
    \node (e1) at  (1.5,-10.5) {$(1^3;1^4)$};
    \node (e2) at (-1.5,-12) {$(1^3;2,1^2)$};
    \node (e3) at (-3,-13.5) {$(1^3;3,1)$};
    \node (e4) at (0,-13.5) {$(1^3;2^2)$};
    \node (e5) at (-1.5,-15) {$(1^3;4)$};     
   %%%%%%%%%%%%%%%%%%%%%%%
   %%% Right chain  
   \draw [thick] (a1) -- (a2) -- (a3) -- (a4) -- (b3) -- (c3) -- (d1) -- (e1) -- (e2) -- (e4) -- (e5);
   %% Left chain
   \draw [thick] (a1) -- (b1) -- (b2) -- (c1) -- (c2) -- (d2) -- (d3) -- (e3) -- (e5);
   %% Other degenerations
   \draw [thick] (a3) -- (b2);
   \draw [thick] (b2) -- (b3);
   \draw [thick]  (c1) -- (c3);
   \draw [thick]  (c3) -- (c4);
   \draw [thick]  (c4) -- (d2);   
   \draw [thick]  (d1) -- (d2);   
   \draw [thick]  (d2) -- (e2);  
   \draw [thick]  (e2) -- (e3);
   \node at (0,-16) {Poset of degenerations of a three-dimensional rational normal scroll of degree 7.};
\end{tikzpicture}
\end{center}

\section{Blowup algebras of the most special determinantal ideals}\label{SectionBlowupAlgebras}

The results of Section \ref{SectionDegenerations} reduce the proof of  Theorem \ref{TheoremMain}
from arbitrary 2-determinantal ideals  to 
 a single ideal in each $\mcH_{c,d}$,
  of KW type  $(1^d;c-d+1)$. 
Our next goal is 
 completing this task.
In this section, we set up the notation for the blowup algebras, and introduce  four classes of polynomial relations and a monomial order for them.

Since we are only concerned with one KW type from now on, we fix a more convenient notation.

\begin{notation}\label{NotationBlowupSpecial}
Let   $d, e \in \N$ be non-negative integers such that $d+e\geq 3$, and 
let $c = d+e-1$.
Consider the following matrix and determinantal ideal
$$
\bfL=
\begin{pmatrix}
y_{1,1} & y_{1,2} & \cdots & y_{1,d} & x_1 & x_2 & \cdots & x_{e-1} & x_{e}   \\
y_{2,1} & y_{2,2} & \cdots & y_{2,d} &x_2 & x_3 & \cdots & x_{e} & 0 
\end{pmatrix} = (\ell_{i,j}) \quad \text{and} \quad
L=I_2(\bfL)
$$
in the polynomial ring $S = \kk[y_{1,1}, \ldots, y_{2,d}, x_1, \ldots, x_e]$.
Here, $\ell_{i,j}$ denotes the entry of $\bfL$ in position $(i,j)$.
\end{notation}

Thus,  $\bfL$ is
a $2\times(c+1)$ matrix
 in Kronecker-Weierstrass normal form and $L$ is a 2-determinantal ideal with  KW type $(1^d;e)$ and codimension $c$.

\subsection{Relations of the blowup algebras}

For each $\al \in [c+1]$ let  $C_\alpha$ denote the $\alpha$-th column of $\bfL$.
Let $\Delta_{\alpha,\beta}=\det(C_\al,C_\be)\in S$ be the $2\times 2 $ minor of $\bfL$
determined by two columns $\al < \be$.
The Rees algebra 
$$
\mcR(L) = S[\Delta_{\al,\be}\tau \, : \, 1 \leq \alpha < \beta \leq c+1] \subseteq S[\tau]
$$ is bigraded  
by  $\deg(y_{i,j})=\deg(x_h)=(1,0)$ and $\deg(\tau)=(-2,1)$.
In particular, $\mcR(L)$ is a standard bigraded $\kk$-algebra, generated in bidegrees $(1,0)$ and $(0,1)$.
As in Section \ref{SectionPreliminaries},
we introduce new variables $T_{\al,\be}$ corresponding to the generators of $L$, and define a polynomial presentation
\begin{equation}\label{EqPresentationRees}
\pi_\mcR \, : \, P_\mcR := S[T_{\al,\be}\, : \, 1 \leq \alpha < \beta \leq c+1] \rightarrow \mcR(L) 
\qquad \text{ by } \quad
\pi_\mcR ( T_{\al,\be}) = \Delta_{\al,\be}\tau.
\end{equation}
The ring $P_\mcR$ is bigraded by  $\deg(y_{i,j})=\deg(x_h)=(1,0)$, $\deg(T_{\al,\be})=(0,1)$, and the map $\pi_\mcR$ is homogeneous and surjective.
The special fiber ring  $\mcF(L)$ is the subring of $\mcR(L)$ concentrated in bidegrees $(0,\ast)$,
and we identify it with the subalgebra of $S$ generated by the minors of $\bfL$
$$
\mcF(L) =\kk[\Delta_{\al,\be}\, : \, 1 \leq \alpha < \beta \leq c+1] \subseteq S.
$$
Restricting the presentation \eqref{EqPresentationRees} to
bidegrees $(0,\ast)$ we obtain 
a polynomial presentation
$$
\pi_\mcF \, : \, P_\mcF := \kk[T_{\al,\be}\, : \, 1 \leq \alpha < \beta \leq c+1] \rightarrow \mcF(L) 
\qquad \text{ by } \quad
\pi_\mcF ( T_{\al,\be}) = \Delta_{\al,\be}.
$$
The defining ideals of $\mcR(L)$ and $\mcF(L)$
are $\ker(\pi_\mcR)$ and $\ker(\pi_\mcF)$, respectively.
The relations of $\mcF(L)$ are exactly 
the relations of $\mcR(L)$ of bidegrees $(0,\ast)$, that is,  $\ker(\pi_\mcF)=[\ker(\pi_\mcR)]_{(0,\ast)}$.
We now describe four classes of relations of $\mcR(L)$ and $\mcF(L)$.

 For every 3-subset $\{\al,\be,\ga\} \in {[c+1] \choose 4}$ we have the {\bf upper Eagon-Northcott relation} 
\begin{equation}\label{EqUpperEagonNorthcott}
\UEN_{\alpha, \beta, \gamma} =  
\ell_{1,\al}T_{\be,\ga} - \ell_{1,\be}T_{\al,\ga}+\ell_{1,\ga}T_{\al,\be}
\end{equation}
and the 
{\bf lower Eagon-Northcott relation} 
\begin{equation}\label{EqLowerEagonNorthcott}
\LEN_{\alpha, \beta, \gamma} =  
\ell_{2,\al}T_{\be,\ga} - \ell_{2,\be}T_{\al,\ga}+\ell_{2,\ga}T_{\al,\be}.
\end{equation}
They can be obtained from the vanishing of the determinant obtained from the $2\times 3$ matrix
$(C_\al, C_\be, C_\ga)$  by duplicating the upper and lower row, respectively:
$$
\det 
\begin{pmatrix}
\ell_{1,\al} & \ell_{1,\be} & \ell_{1,\ga} \\
C_\alpha & C_\beta & C_\gamma 
\end{pmatrix}=
\det 
\begin{pmatrix}
C_\alpha & C_\beta & C_\gamma \\
\ell_{2,\al} & \ell_{2,\be} & \ell_{2,\ga} 
\end{pmatrix}=0.
$$

 For every $\{\al,\be,\ga,\de\} \in {[c+1] \choose 3}$ we have the {\bf Plucker relation} 
\begin{equation}\label{EqPluckerRelation}
\PLU_{\alpha, \beta, \gamma, \delta} =  T_{\alpha,\beta} T_{\gamma,\delta} - T_{\alpha,\gamma} T_{\beta,\delta} +T_{\alpha,\delta} T_{\beta,\gamma}.
\end{equation}
When $\mathrm{char}(\kk)\ne 2$, it  can be obtained from  the vanishing of the determinant obtained by duplicating the entire 
$2\times 4$ matrix
$(C_\al, C_\be, C_\ga, C_\de)$
$$
\det 
\begin{pmatrix}
C_\alpha & C_\beta & C_\gamma & C_\delta \\
C_\alpha & C_\beta & C_\gamma & C_\delta 
\end{pmatrix}=0
$$

Finally, for every $\{\al,\be,\ga,\de\} \in {\{d+1,\ldots, c+1\}\choose 4}$
 we have a polynomial relation 
$\LAP_{\alpha, \beta, \gamma, \delta}$,  which we call a {\bf Laplace relation}, obtained as follows.
If  $\delta \leq c$, the following $4\times 4 $ determinant vanishes
$$
\det 
\begin{pmatrix}
C_\alpha & C_\beta & C_\gamma & C_\delta \\
C_{\alpha+1} & C_{\beta+1} & C_{\gamma+1} & C_{\delta+1}
\end{pmatrix}=0,
$$
since the second and third  rows coincide.
This yields the relation
\begin{equation}\label{EqLaplaceNoLastColumn}
\begin{split}
\LAP_{\alpha, \beta, \gamma, \delta} =
T_{\alpha,\beta} T_{\gamma+1,\delta+1} - T_{\alpha,\gamma} T_{\beta+1,\delta+1}+T_{\alpha,\delta} T_{\beta+1,\gamma+1}\\+
T_{\beta,\gamma} T_{\alpha+1,\delta+1} - T_{\beta,\delta} T_{\alpha+1,\gamma+1}+T_{\gamma,\delta} T_{\alpha+1,\beta+1}.
\end{split}
\end{equation}
If $\delta = c+1$,
the following $4\times 4 $ determinant vanishes
$$
\det 
\begin{pmatrix}
C_\alpha & C_\beta & C_\gamma & C_{c+1} \\
C_{\alpha+1} & C_{\beta+1} & C_{\gamma+1} & 0
\end{pmatrix}=0,
$$
yielding the relation 
\begin{equation}\label{EqLaplaceLastColumn}
\LAP_{\alpha, \beta, \gamma, c+1} =
T_{\alpha,c+1} T_{\beta+1,\gamma+1}
- T_{\beta,c+1} T_{\alpha+1,\gamma+1}+T_{\gamma,c+1} T_{\alpha+1,\beta+1}.
\end{equation}

\noindent
Eagon-Northcott relations have bidegree $(1,1)$,  while  Plucker and Laplace relations have bidegree $(0,2)$.

\begin{remark}
The first three classes of relations are  well-known, 
and they exist in general for all ideals of maximal minors.
The Eagon-Northcott relations are  the first syzygies in the Eagon-Northcott complex.
The Plucker relations are the defining relations of the Grassmannian variety.
The relations of the fourth class are less well-known, as  they arise from the special structure of the matrix $\bfL$; however, similar relations  have appeared in \cite{CHV} and \cite{Sammartano} in the case of rational normal scrolls.
\end{remark}

\subsection{The monomial order}\label{SubsectionMonomialOrder}
We now introduce a monomial order $\preceq$ on the  polynomial rings 
$P_\mcR = \kk[y_{i,j},x_h,T_{\al,\be}]$
and, by restriction, $P_\mcF = \kk[T_{\al,\be}]$.
This  order, while apparently convoluted, 
has the benefit of determining two well-behaved initial complexes for $\mcR(L)$ and $\mcF(L)$, specifically, two flag complexes with good homological and combinatorial properties, as we will see in  the next sections.
The  definition of $\preceq$ is not particularly important: what matters are the  initial terms it picks from the polynomial relations described above.

The  order $\preceq$ is constructed by defining a hierarchy of rules for breaking ties among monomials.
\begin{enumerate}
\item Given  two monomials $\bfu, \bfv \in P_\mcR$
we factor  $\bfu = \bfu' \bfu'', \bfv= \bfv'\bfv''$
where $\bfu',\bfv' \in \kk[y_{2,1},\ldots, y_{2,d}]$ and 
$\bfu'',\bfv'' \in \kk[y_{1,1},\ldots, y_{1,d}, x_1, \ldots, x_e,
T_{\al,\be}\, : \, 1 \leq \alpha < \beta \leq c+1]$,
and declare $\bfu \succeq \bfv$ if $\bfu' > \bfv'$ with respect to the lexicographic order induced by $y_{2,1} > y_{2,2} > \cdots > y_{2,d}$.
In other words, in the first step we order monomials lexicographically considering only the variables $y_{2,1},\ldots, y_{2,d}$.
\item 
Let   $ \mathbb{N}^{c+1}$ be the free abelian monoid with  basis $\bfe_1, \ldots, \bfe_{c+1}$,
equipped with the graded reverse-lexicographic order  induced by $\bfe_{c+1} > \bfe_{c} > \cdots > \bfe_1$.
We define a multigrading on $P_\mcR$ by
$$
\qquad
\mdeg_1(T_{\alpha,\beta})=\mathbf{e}_\alpha+\mathbf{e}_\beta,
\quad 
\mdeg_1(y_{1,\al})=\bfe_\al, 
\quad
\mdeg_1(y_{2,\al})=\mathbf{0}, 
\quad
\mdeg_1(x_\al) = \bfe_{\al+d}.
$$ 
In other words, $\mdeg_1(\cdot)$  keeps track of columns in the matrix $\bfL$; for the variable $x_\al$, it selects the column on which it appears in the first row.
In the second step, we compare monomials according to their multidegree in $\N^{c+1}$.
\item 
Consider again   $ \mathbb{N}^{c+1}$, now 
equipped with the graded reverse-lexicographic order
induced by $\bfe_{1} > \bfe_{2} > \cdots > \bfe_{c+1}$.
We define another multigrading on $P_\mcR$ by
$
\mdeg_2(T_{\alpha,\beta})=\mathbf{e}_\alpha   
$
and
$\mdeg_2(y_{1,\al})=
\mdeg_2(y_{2,\al})=
\mdeg_2(x_\al) = \mathbf{0},
$ 
and we compare monomials according to this multigrading.
 \item In the final step, we compare monomials using the lexicographic order on $P_\mcR$ induced by
$$
y_{2,1} >  \cdots > y_{2,d} > 
y_{1,1} >  \cdots > y_{1,d} > x_1 > T_{\alpha,\beta} > x_2 > \cdots > x_e \quad \text{for all} \,\,\, \al, \be
$$
 where the  $T_{\alpha,\beta}$ are ordered by $
 T_{1,1} > T_{1,2} > \cdots > T_{1,c+1} > T_{2,1} > \cdots > T_{c,c+1}.
$
\end{enumerate}

\begin{prop}\label{PropLeadingMonomials}
The leading monomials of the polynomials \eqref{EqUpperEagonNorthcott},
\eqref{EqLowerEagonNorthcott},
\eqref{EqPluckerRelation},
\eqref{EqLaplaceNoLastColumn},
\eqref{EqLaplaceLastColumn}
are
\begin{eqnarray}
\LM(\UEN_{\al,\be,\ga}) &=& 
\begin{cases} 
\ell_{1,\ga} T_{\al,\be}= x_{\ga-d}T_{\al,\be} & \text{if } \be \geq d+2, \\
\ell_{1,\be} T_{\al,\ga}= x_1 T_{\al,\ga} & \text{if } \be = d+1,\\
\ell_{1,\be} T_{\al,\ga}= y_{1,\be} T_{\al,\ga} & \text{if } \be \leq d,\\
\end{cases}\\
\LM(\LEN_{\al,\be,\ga}) &=& 
\begin{cases} 
\ell_{2,\al} T_{\be,\ga}= y_{2,\al} T_{\be,\ga} & \text{if } \al \leq d,\\
\ell_{2,\al} T_{\be,\ga}= x_{\al+1-d} T_{\be,\ga} & \text{if } \al \geq d+1,\\
\end{cases}\\
\label{EqLMPlucker}
\LM(\PLU_{\al,\be,\ga,\de}) &=& T_{\al,\ga}T_{\be,\de}, \,\,\, \text{and}\\
\label{EqLMLaplace}
\LM(\LAP_{\al,\be,\ga,\de}) &=& T_{\al+1,\be+1}T_{\ga,\de}.
\end{eqnarray}
\end{prop}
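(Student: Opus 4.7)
The plan is to verify, for each of the four relation types, that the claimed monomial is maximal under $\preceq$ by pinpointing the exact step at which the tie is broken. For both the upper Eagon--Northcott and Plucker relations, steps~(1) and (2) are uninformative: no monomial involves a variable $y_{2,*}$, and since $\mdeg_1(\ell_{1,i}) = \bfe_i$, the three monomials in each relation share the same $\mdeg_1$. Step~(3) then eliminates one monomial in each case: in the upper EN relation, $\ell_{1,\alpha}T_{\beta,\gamma}$ has $\mdeg_2 = \bfe_\beta$, strictly smaller than the $\mdeg_2 = \bfe_\alpha$ of the other two under the grevlex with $\bfe_1>\cdots>\bfe_{c+1}$; similarly, $T_{\alpha,\beta}T_{\gamma,\delta}$ has $\mdeg_2 = \bfe_\alpha+\bfe_\gamma$, strictly smaller than the common $\mdeg_2 = \bfe_\alpha+\bfe_\beta$ of the other two Plucker monomials. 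The final choice between the surviving two monomials is made by step~(4): for Plucker, $T_{\alpha,\gamma}$ is the lex-largest variable distinguishing the two, so $T_{\alpha,\gamma}T_{\beta,\delta}$ wins; for upper EN, the largest lex variable appearing in either surviving monomial is $y_{1,\beta}$, $x_1$, or $T_{\alpha,\beta}$ according to whether $\beta\leq d$, $\beta = d+1$, or $\beta\geq d+2$, which recovers the three subcases in the statement.

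For the lower Eagon--Northcott relations the crucial asymmetry is that $\mdeg_1(\ell_{2,i}) = \mathbf{0}$ for $i\leq d$ whereas $\mdeg_1(\ell_{2,i}) = \bfe_{i+1}$ for $i\geq d+1$. When $\alpha\leq d$, step~(1) directly selects $\ell_{2,\alpha}T_{\beta,\gamma} = y_{2,\alpha}T_{\beta,\gamma}$, as no other monomial contains any $y_{2,j}$ with $j\leq\alpha$. When $\alpha\geq d+1$, step~(1) is tied and the three monomials have $\mdeg_1$ equal to $\bfe_{\alpha+1}+\bfe_\beta+\bfe_\gamma$, $\bfe_{\beta+1}+\bfe_\alpha+\bfe_\gamma$, and $\bfe_{\gamma+1}+\bfe_\alpha+\bfe_\beta$ (the third omitted when $\gamma=c+1$); the reverse-lex comparison in step~(2) with $\bfe_{c+1}>\cdots>\bfe_1$ selects the first, which is characterised as the unique multidegree whose coefficient at the smallest index $\alpha$ vanishes. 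For the Laplace relations, step~(1) is again vacuous and step~(2) does all the work: the claimed leading monomial $T_{\alpha+1,\beta+1}T_{\gamma,\delta}$ (or $T_{\alpha+1,\beta+1}T_{\gamma,c+1}$ when $\delta=c+1$) has $\mdeg_1$-support whose minimum index $\alpha+1$ strictly exceeds that of every monomial whose support still contains $\alpha$; and among the remaining monomials whose $\mdeg_1$-support also has minimum $\alpha+1$, an iterated reverse-lex comparison again favours the claimed one.

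The main obstacle lies in the Laplace case, where six summands and four independently varying indices must be compared, and coincidences such as $\beta=\alpha+1$ or $\gamma=\beta+1$ alter the individual $\mdeg_1$-supports by merging basis vectors. Fortunately, the reverse-lex comparison depends only on the smallest index where two multidegrees differ, so the argument is stable under such collisions and reduces to a short, mechanical enumeration. Every other case is essentially immediate once the correct step of $\preceq$ is identified, so the overall proof is really a bookkeeping exercise directly driven by the explicit definition of the monomial order.
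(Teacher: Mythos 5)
Your proposal is correct and follows essentially the same approach as the paper: it walks through the four tie-breaking steps of $\preceq$, observing that steps (1)–(2) resolve the lower Eagon--Northcott and Laplace relations (via the vanishing of the coefficient at the smallest column index in the grevlex comparison) while steps (3)–(4) resolve the Plucker and upper Eagon--Northcott relations. The only difference is organizational (you proceed relation-by-relation rather than step-by-step) and that you spell out the reverse-lexicographic comparisons for the Laplace and lower Eagon--Northcott cases, which the paper leaves implicit.
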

\begin{proof}
In order to verify the statements, we need to compare the terms in the support of each polynomial following the four steps in the construction of $\preceq$ until we obtain a unique highest monomial.

Step (1) is only relevant for polynomials containing some $y_{2,i}$,
that is, for $\LEN_{\al,\be,\ga}$ with $\al \leq d$, 
and for them the unique highest term is the desired $y_{2,\al}T_{\be,\ga}$.

In step (2), we note that Plucker and upper Eagon-Northcott relations are homogeneous with respect to $\mdeg_1(\cdot)$,
so this step has no effect on them,
while the remaining relations are not homogeneous.
In both cases, $\delta \leq c $ and $\delta = c+1$, the unique highest monomial in $\LAP_{\alpha, \beta, \gamma, \delta}$ is
 $
T_{\alpha+1,\beta+1} T_{\gamma,\delta} 
$ as desired.
Likewise, 
for each $\LEN_{\alpha, \beta, \gamma}$ with $\al \geq d+1$ we  see that $x_{\al+1-d} T_{\be,\ga} $ is the unique highest  monomial.

In step (3), neither the Plucker nor the upper Eagon-Northcott relations are homogeneous with respect to $\mdeg_2(\cdot)$.
The two (equally) highest monomials  in 
$\PLU_{\alpha, \beta, \gamma, \delta}$ are
$T_{\alpha,\gamma} T_{\beta,\delta}$ and  $T_{\alpha,\delta} T_{\beta,\gamma}$,
while 
the two (equally) highest monomials in 
$\UEN_{\alpha, \beta, \gamma}$  are
$\ell_{1,\be}T_{\alpha,\gamma}$ and  $\ell_{1,\ga}T_{\alpha,\beta} $.

The last remaining ties are broken in step (4).
The highest variable in $\{T_{\alpha,\gamma} T_{\beta,\delta},T_{\alpha,\delta} T_{\beta,\gamma}\}$ is $T_{\al,\ga}$, 
so $\LM(\PLU_{\alpha, \beta, \gamma, \delta}) = T_{\alpha,\gamma} T_{\beta,\delta}$ by the lexicographic order.
The highest variable
in 
$\{\ell_{1,\be}T_{\alpha,\gamma},\ell_{1,\ga}T_{\alpha,\beta}\} $
is $\ell_{1,\be}$ if $\be \leq d+1$, 
 $T_{\alpha,\beta}$ if $\be \geq d+2$,
so we get the claimed leading monomial for $\UEN_{\al,\be,\ga}$.
\end{proof}

\section{Equations of the special fiber ring}\label{SectionSpecialFiber}

In this section, we prove the following theorem:

\begin{thm}\label{TheoremSpecialFiber}
The Plucker and Laplace relations 
\eqref{EqPluckerRelation},
\eqref{EqLaplaceNoLastColumn},
\eqref{EqLaplaceLastColumn}
 form a Gr\"obner basis of the defining ideal $\ker(\pi_\mcF)$ of the special fiber ring $\mcF(L)$
 with respect to $\preceq$.
\end{thm}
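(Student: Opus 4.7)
The plan is to establish the equality $\mathrm{in}_\preceq(\ker \pi_\mcF) = J$, where
\[
J := \bigl( \LM(\PLU_{\al,\be,\ga,\de}),\; \LM(\LAP_{\al,\be,\ga,\de}) \bigr) \subseteq P_\mcF
\]
is the squarefree quadratic monomial ideal generated by the leading terms identified in \eqref{EqLMPlucker}--\eqref{EqLMLaplace}. The inclusion $J \subseteq \mathrm{in}_\preceq(\ker \pi_\mcF)$ is immediate since both families of relations lie in $\ker(\pi_\mcF)$, and it yields the automatic inequality $\dim_\kk[P_\mcF/J]_k \geq \dim_\kk[\mcF(L)]_k$ in every degree. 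The reverse inclusion is therefore equivalent to the Hilbert function equality
\[
\dim_\kk [P_\mcF/J]_k \;=\; \dim_\kk [\mcF(L)]_k \qquad \text{for every } k \geq 0,
\]
and I would reduce the problem to proving this identity.

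On the combinatorial side, since $J$ is squarefree quadratic, $P_\mcF/J = \kk[\Delta]$ is the Stanley-Reisner ring of a flag complex $\Delta$ on the vertex set $V = \{T_{\al,\be} : 1 \leq \al < \be \leq c+1\}$. The non-edges of $\Delta$ split into two families: the Plucker \emph{crossings} $\{T_{\al,\ga}, T_{\be,\de}\}$ with $\al<\be<\ga<\de$, and the Laplace \emph{shifted crossings} $\{T_{\al+1,\be+1}, T_{\ga,\de}\}$ with $d+1 \leq \al<\be<\ga<\de \leq c+1$. Since $\Delta$ is flag, $\dim_\kk [\kk[\Delta]]_k$ equals the number of $k$-multisets of vertices all of whose pairs are edges. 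I would enumerate these in two stages: first imposing the Plucker condition, which recovers the classical non-crossing / semistandard Young tableau basis for the homogeneous coordinate ring of $\Gr(2,c+1)$, and then pruning according to the Laplace condition, which is a further restriction on the entries lying in the Jordan-block range $\{d+1,\dots,c+1\}$.

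On the algebraic side, by Lemma \ref{LemmaConstantHilbertFunctionBlowups} the Hilbert function of $\mcF(L)$ depends only on $c$ and $d$, and can be read off either from the Eagon-Northcott-type linear resolution of $L^k$ recorded in \cite[Theorem 3.7]{BrunsConcaVarbaro}, or by invoking Corollary \ref{CorDegenerationBlowupAlgebras} to compare with a scroll whose fiber ring has a known Hilbert series. Alternatively, and more in the spirit of a straightening-law argument, it suffices to prove that the images under $\pi_\mcF$ of the standard monomials of $\kk[\Delta]$ are $\kk$-linearly independent in $\mcF(L)$. The multigradings $\mdeg_1$ and $\mdeg_2$ introduced in Section \ref{SubsectionMonomialOrder} were designed precisely to separate these classes: tracking how each standard monomial contributes to the corresponding multigraded strata of $S$ would give the linear independence.

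The main obstacle is the combinatorial enumeration and its matching with the Hilbert function of $\mcF(L)$. The Plucker part is classical, but the Laplace leading terms interact asymmetrically with the Jordan-block indices, so producing a canonical normal form for standard monomials — equivalently, a termination-with-uniqueness proof for the straightening procedure that rewrites arbitrary $T$-monomials modulo $J$ — is where the technical bulk lies. A parallel combinatorial analysis is carried out in \cite{Sammartano} for the family of all scrolls, and the authors have announced that the single explicit ideal $L$ yields a significantly tighter argument; once the canonical form is in hand, the Hilbert function equality and hence the Gr\"obner basis property fall out immediately.
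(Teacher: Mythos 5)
Your reduction of the theorem to the equality of Hilbert functions of $P_\mcF/J$ and $\mcF(L)$ is sound in principle, but the proposal stops exactly where the work begins: you acknowledge that the enumeration of standard monomials, or equivalently a straightening law with a canonical normal form, ``is where the technical bulk lies,'' and you do not supply it. Moreover, the fallback you suggest for proving linear independence of standard monomials cannot work as stated. The map $\pi_\mcF$ is not homogeneous with respect to $\mdeg_2$ (the Pl\"ucker relations fail to be $\mdeg_2$-homogeneous --- this is precisely how step (3) of the order breaks ties in Proposition \ref{PropLeadingMonomials} --- and $\mdeg_2$ assigns $\mathbf{0}$ to every variable of $S$, so $\mcF(L)\subseteq S$ carries no corresponding grading), while $\mdeg_1$ alone does not separate standard monomials: the two standard monomials $T_{\al,\be}T_{\ga,\de}$ and $T_{\al,\de}T_{\be,\ga}$ occurring in a single Pl\"ucker relation have the same $\mdeg_1$. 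So ``tracking multigraded strata of $S$'' gives no linear independence.

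The paper's proof avoids the full Hilbert function comparison entirely, and this is the idea you are missing. Lemma \ref{LemmaCriterionInitialComplex} shows that once $\mcI(\Delta_\mcF)\subseteq \mathrm{in}_\preceq(\ker\pi_\mcF)$ is known (Proposition \ref{PropLeadingMonomials}), and the complex $\Delta_\mcF$ is \emph{pure}, equality of the two ideals follows from matching only the Krull dimension and the multiplicity: purity forces all associated primes of $\mcI(\Delta_\mcF)$ to have the same height, the minimal primes of the initial ideal are then among them, and the associativity formula for multiplicities forces equality locally at each such prime. Consequently one only has to count \emph{facets} of $\Delta_\mcF$ and verify they all have the same cardinality (Propositions \ref{PropCaseE3}, \ref{PropCaseE4}, \ref{PropCountFacets}), and compare with the dimension and multiplicity of $\mcF(L)$, which come from \cite{CHV} via Lemma \ref{LemmaConstantHilbertFunctionBlowups} when $d\geq 1$ and from the explicit computation in Appendix \ref{AppendixMultiplicities} when $d=0$. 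Counting facets of the non-crossing-type complex is a tractable tree enumeration; counting all faces of all dimensions, or producing a normal form for the rewriting procedure, is a substantially harder problem that the paper deliberately sidesteps. If you want to complete your argument along your own lines you must either carry out the full standard monomial theory, or adopt the purity-plus-multiplicity criterion.
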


The proof strategy   is very similar to that of  \cite[Section 3]{Sammartano},
based on the enumerative analysis of the initial simplicial complex of $\mcF(L)$.
The analysis is  substantially trimmer   here, since we only consider the KW type $(1^d;e)$,
whereas in 
\cite{Sammartano} a different  complex is needed for every scroll partition.
Rather than giving a completely self-contained treatment,  
we present the full analysis but refer to \cite{Sammartano} 
for the combinatorial   calculations of Proposition \ref{PropCountFacets}.
The (new) case $d=0$ requires the additional computation of the multiplicity of $\mcF(L)$, which we perform in Appendix  \ref{AppendixMultiplicities}.

We refer to \cite{HerzogHibi} for background on monomial ideals, simplicial complexes, and the Stanley-Reisner correspondence.

Let $\Delta_\mcF$ be the flag simplicial complex defined by the squarefree quadratic monomials \eqref{EqLMPlucker} and \eqref{EqLMLaplace}, that is, the complex whose Stanley-Reisner ideal is
$$
\mcI(\Delta_\mcF)=
\Big(
T_{\al,\ga} T_{\be,\de}\,\, \big|\, 1\leq \alpha  < \be < \ga < \de \leq c+1\Big)+
\Big(
T_{\al,\be} T_{\ga,\de}\, \,\big|\, d +2\leq \alpha  < \be \leq \ga < \de \leq c+1\Big).
$$
Let   $\mcV  = \big\{ (\al,\be) \, \mid \{\al,\be\} \in {[c+1] \choose 2}	\big\}$ denote the vertex set of $\Delta_\mcF$.
We identify vertices  with open intervals of the real line $\mathbb{R}$, and   use   concepts such as inclusion,   length, and intersection.
Translating  the two classes of generators of $\mcI(\Delta_\mcF)$,  a subset $F \subseteq \mcV$ is a face of $\Delta_\mcF$ if and only if two conditions hold:
\begin{enumerate}
\item[($\dagger$)] Any two intervals  in $F$ are  \emph{non-crossing}: either they are
 disjoint or one is  contained in the other.
\item[($\ddagger$)] $F$ contains no two intervals  that are disjoint and 
contained in $(d+2,c+1)$.
\end{enumerate}
The relation of inclusion  between intervals 
makes the vertex set into a poset $(\mcV,\subseteq)$.
If a subset $F \subseteq \mcV$
satisfies $(\dagger)$,
then  the Hasse diagram of the poset $(F,\subseteq)$   is a graph without cycles, hence a union of (ordered) trees.
We use tree-related terminology such as root, children, leaves, and internal nodes.

When Laplace relations are absent, i.e. when $e \leq 3$,
$\Delta_\mcF$ is the \emph{non-crossing} complex of the Plucker algebra \cite{PPS},
and the numerical data  of its facets is easy to determine:

\begin{prop}\label{PropCaseE3}
Assume $e \leq 3$.
A subset $F \subseteq \mcV$ is a facet of $\Delta_\mcF$
if and only if the Hasse diagram of the poset $(F,\subseteq)$ 
satisfies the following conditions:
\begin{itemize}
\item $F$ is a  binary tree with root $(1,c+1)$;
\item the children of an internal node  $(\al,\be)\in F$  
 are of the form $(\al, \ga), (\ga, \be)$ for some $\al < \ga < \be$;
\item  the leaves of $F$ are  $(1,2),(2,3),\dots,(c,c+1)$.
\end{itemize}
Every facet contains $2c-1$ intervals,  and the number of facets is 
$ {2c-2 \choose c-1} -{2c-2 \choose c}$.
\end{prop}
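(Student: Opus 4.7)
The plan is to reduce the problem to the classical non-crossing complex on $\mcV$ and then derive the binary-tree structure of its facets directly from maximality.

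First I would observe that condition $(\ddagger)$ imposes no restriction when $e \leq 3$: the open real interval $(d+2, c+1)$ has length $c - d - 1 = e - 2 \leq 1$, so it contains at most one nondegenerate subinterval with integer endpoints, and a fortiori no two disjoint ones. Hence for $e \leq 3$ the faces of $\Delta_\mcF$ are exactly the subsets of $\mcV$ satisfying $(\dagger)$, i.e., collections of pairwise non-crossing intervals.

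Next I would analyze the facets $F$ through three successive observations. First, since $(1, c+1)$ is nested with every other vertex of $\mcV$, by maximality $(1, c+1) \in F$; this provides the root of the Hasse poset $(F, \subseteq)$. Second, every unit interval $(i, i+1)$ is disjoint from or nested with every other vertex of $\mcV$, so again by maximality all $c$ unit intervals lie in $F$ and supply the claimed leaves. Third, I would show that every internal node $(\al, \be) \in F$ with $\be - \al \geq 2$ has exactly two children of the form $(\al, \ga), (\ga, \be)$. The children of $(\al, \be)$ in $F$ are pairwise non-crossing and mutually maximal among proper subintervals of $(\al, \be)$ in $F$, hence pairwise disjoint; write them left-to-right as $(\al_1, \be_1), \ldots, (\al_k, \be_k)$. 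Each of the gap intervals $(\al, \al_1)$, $(\be_i, \al_{i+1})$, $(\be_k, \be)$, if nondegenerate, would be non-crossing with the entire $F$, and so would lie in $F$ by maximality, contradicting the enumeration of children; this forces $\al = \al_1$, $\be_i = \al_{i+1}$ for $i = 1, \ldots, k-1$, and $\be_k = \be$. If $k \geq 3$, the merged interval $(\al_1, \be_2)$ is similarly non-crossing with $F$ and strictly intermediate between $(\al, \be)$ and $(\al_1, \be_1)$, contradicting the fact that $(\al_1, \be_1)$ is a child. Hence $k = 2$. Conversely, any $F$ built from the three bulleted rules is visibly non-crossing, and maximality is immediate from an inductive application of the same gap and merge arguments.

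Finally, the numerical data follows from standard Catalan combinatorics: a full binary tree with $c$ ordered leaves has $c - 1$ internal nodes, so $|F| = 2c - 1$, and the number of such trees is the Catalan number $C_{c-1} = \frac{1}{c}{2c-2 \choose c-1}$, which coincides with ${2c-2 \choose c-1} - {2c-2 \choose c}$ by the standard identity. I expect the main obstacle to lie in observation (iii), where the gap-filling and merge arguments must be deployed carefully to rule out both $k = 1$ and $k \geq 3$ branchings using the maximality of the facet.
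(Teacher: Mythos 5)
Your proposal is correct and follows essentially the same route as the paper: observe that $(\ddagger)$ is vacuous for $e\leq 3$, so facets are exactly the maximal non-crossing families, which correspond to full binary trees on the $c$ ordered unit-interval leaves and are counted by the Catalan number $C_{c-1}$. The paper states this with almost no detail, so your gap-filling and merging arguments simply supply the verification the paper leaves to the reader.
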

\begin{proof}
Note that $(\ddagger)$ is vacuous when $e \leq 3$.
The  conditions  follow quickly from the fact that  a facet is a subset $F\subseteq \mcV$ that satisfies  $(\dagger)$ and  is maximal with respect to this property.
Thus, facets  correspond to full binary trees on $c$ leaves.
It is well-known that 
 such tree have $2c-1$ nodes,
 and that the number of such trees is the  Catalan   number  $C_{c-1} = {2c-2 \choose c-1} -{2c-2 \choose c-2}$.
\end{proof} 

When Laplace relations are present, and   $F \subseteq \mcV$ is a subset  satisfying $(\dagger)$, then $(\ddagger)$ holds  if and only if it holds  for the leaves of $F$.
This yields an analogous description of the facets of $\Delta_\mcF$.

\begin{prop}\label{PropCaseE4}
Assume $e \geq 4$.
A subset $F \subseteq \mcV$ is a facet of $\Delta_\mcF$
if and only if the Hasse diagram of the poset $(F,\subseteq)$ 
satisfies the following conditions:
\begin{itemize}
\item $F$ is a binary tree with root $(1,c+1)$;
\item if $(\al,\be)\in F$ has two children, then 
they are of the form $(\al, \ga), (\ga, \be)$ for some $\al < \ga < \be$;
\item if $(\al,\be)\in F$ has one child, then 
it is either $(\al+1,\be)$ or $(\al,\be-1)$;
\item  the leaves of $F$ are  $(1,2),(2,3),\dots,(d+1,d+2)$ and  $(\ell,\ell+1)$ for one $\ell \geq d+2 $.
\end{itemize}
Every facet contains $c+d+1$ intervals.
\end{prop}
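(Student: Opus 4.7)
The plan is to extend the analysis of Proposition \ref{PropCaseE3} by incorporating the Laplace condition $(\ddagger)$. As before, $(\dagger)$ forces $(F,\subseteq)$ to be a forest, and maximality puts $(1,c+1)$ in $F$ (it is non-crossing with every vertex and not contained in $(d+2,c+1)$), making $F$ a tree rooted at $(1,c+1)$. A key consequence of $(\ddagger)$ is that $F\cap(d+2,c+1)$ is a chain under inclusion: its elements are pairwise non-crossing by $(\dagger)$, and any disjoint pair there would violate $(\ddagger)$.

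First I would bound the number of children of any internal node by two. If $(\al,\be)$ had children $(a_1,b_1),\dots,(a_k,b_k)$ with $k\geq 3$, the chain property applied to those of the children that lie in $(d+2,c+1)$ forces $a_1,a_2\leq d+1$; then $(a_1,b_2)$ is non-crossing with $F$ and not contained in $(d+2,c+1)$, so it is addable, contradicting maximality. Next I would analyze the two-child case: if $(\al,\be)$ has children $(a_1,b_1),(a_2,b_2)$, the identities $a_1=\al$, $b_1=a_2$, $b_2=\be$ follow one at a time by attempting to insert the fillers $(\al,a_1)$, $(b_1,a_2)$, $(b_2,\be)$. Each filler is either not contained in $(d+2,c+1)$ (hence unconditionally addable) or, when it does lie there, nests inside the chain $F\cap(d+2,c+1)$ and creates no forbidden disjoint pair, so it is addable as well. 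A parallel argument for the one-child case rules out any child other than $(\al+1,\be)$ or $(\al,\be-1)$.

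For the leaves, any unit interval $(\al,\al+1)$ is non-crossing with every vertex of $\mcV$ (integer endpoints preclude interleaving), so if $\al\leq d+1$ it is not contained in $(d+2,c+1)$ and by maximality belongs to $F$; it is then automatically a leaf. The chain $F\cap(d+2,c+1)$ is non-empty (otherwise $(d+2,c+1)$ itself would be addable, unless blocked by a crossing interval of $F$, and tracing through such an interval recursively eventually produces an element of $F$ in $(d+2,c+1)$) and, being totally ordered by inclusion, contains a unique minimal element, which must be a unit interval $(\ell,\ell+1)$ with $\ell\geq d+2$; a second such unit interval would be disjoint from this one, violating $(\ddagger)$. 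The count follows from the binary-tree identity: with $n_0=d+2$ leaves, one has $n_2=n_0-1=d+1$ two-child nodes and $n_1=(c+d+1)-(n_0+n_2)=e-3$ one-child nodes, totaling $2d+e=c+d+1$ intervals.

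The main obstacle is the nonlocal nature of $(\ddagger)$: whereas $(\dagger)$ can be checked against immediate neighbors in the tree, each filler-addability argument must use the chain property of $F\cap(d+2,c+1)$ to rule out the creation of forbidden disjoint pairs elsewhere in $F$. Verifying the converse, that any $F$ of the stated form is a facet, is then routine: $(\dagger)$ is immediate from the tree, $(\ddagger)$ from the fact that only one branch of the tree ever enters $(d+2,c+1)$, and maximality follows by case analysis on any missing interval.
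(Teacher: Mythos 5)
Your overall strategy (analyze maximal subsets satisfying $(\dagger)$ and $(\ddagger)$ directly, using the observation that $F\cap(d+2,c+1)$ is a chain) is the right one and is what the paper's omitted proof intends. However, there is a genuine gap in your two-child analysis. You claim that each of the fillers $(\al,a_1)$, $(b_1,a_2)$, $(b_2,\be)$, when it lies in $(d+2,c+1)$, ``nests inside the chain $F\cap(d+2,c+1)$ and creates no forbidden disjoint pair.'' This is false for the gap-fillers $(b_1,a_2)$ and $(b_2,\be)$: they are \emph{disjoint} from the child $(a_2,b_2)$, not nested in the chain. Concretely, if $b_1<a_2$ with $b_1\ge d+2$, then $a_2\ge b_1\ge d+2$, so both $(b_1,a_2)$ and the child $(a_2,b_2)$ lie in $(d+2,c+1)$ and are disjoint --- adding $(b_1,a_2)$ creates exactly a forbidden pair, so this filler is \emph{not} addable (e.g.\ $d=1$, root $(1,6)$ with children $(1,3)$ and $(4,6)$: the filler $(3,4)$ is blocked by $(4,6)$). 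The same failure occurs for $(b_2,\be)$ when $a_2,b_2\ge d+2$. The conclusions $b_1=a_2$ and $b_2=\be$ are still true, but you must use the \emph{expanding} fillers $(a_1,a_2)$ and $(a_2,\be)$ instead (the same kind you correctly use in your three-children argument): $(a_1,a_2)$ has left endpoint $a_1\le d+1$ and is unconditionally addable, while $(a_2,\be)$ \emph{contains} the child $(a_2,b_2)$, so every element of the chain $F\cap(d+2,c+1)$, being comparable to $(a_2,b_2)$, meets $(a_2,\be)$, and no forbidden pair arises. Your ``parallel argument'' for the one-child case needs the analogous care: the addable intermediate intervals are those containing the child, such as $(\al,b+1)$ or $(a-1,\be)$.

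Two smaller issues. First, your final count is circular: you compute $n_1=(c+d+1)-(n_0+n_2)$ using the total $c+d+1$ that you are trying to establish. The correct derivation is that each one-child step decreases the sum of the lengths of the current frontier by exactly $1$ while each two-child step preserves it, so passing from the root of length $c$ to the $d+2$ unit leaves forces $n_1=c-(d+2)=e-3$, whence the total is $(d+2)+(d+1)+(e-3)=c+d+1$. Second, your argument that $F\cap(d+2,c+1)\ne\emptyset$ (``tracing through such an interval recursively'') is too vague to check; a clean substitute is to observe that the unit interval $(c,c+1)$ is non-crossing with every vertex, so if $F\cap(d+2,c+1)$ were empty then $(c,c+1)$ would be addable without violating $(\ddagger)$, contradicting maximality.
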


\begin{proof}
Once again, the statements follow easily since a facet is a maximal subset $F$ that satisfies  $(\dagger), (\ddagger)$; we omit the details.
Alternatively, see  the similar \cite[Proposition 3.8]{Sammartano}.
\end{proof}

\begin{prop}\label{PropCountFacets}
Assume $e \geq 4$. 
The number of facets of $\Delta_\mcF$ is
$
\sum_{h=1}^{c-d-1}{c+d \choose h+d} - (c-d-1){c+d\choose d}.$
\end{prop}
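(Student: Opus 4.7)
The plan is to enumerate the facets described in Proposition~\ref{PropCaseE4} directly, following the combinatorial strategy of \cite[Section~3]{Sammartano}. Each facet $F$ is determined by a full binary ``skeleton'' tree $T$ on the $d+2$ ordered leaves (obtained from $F$ by suppressing every one-child node) together with a prescription for inserting the $c-d-2$ one-child nodes along the edges of $T$. Along each skeleton edge from a parent interval $(\al,\be)$ to a child interval $(\al',\be')$, the inserted one-child chain corresponds to an interleaving of $\al'-\al$ left-shrinks with $\be-\be'$ right-shrinks, contributing a binomial factor $\binom{(\al'-\al)+(\be-\be')}{\al'-\al}$.

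I would first stratify the facets by the position $\ell \in \{d+2,\ldots,c\}$ of the variable leaf $(\ell,\ell+1)$, and for each $\ell$ sum the product of edge contributions over all skeleton trees compatible with the forced leaf set. The resulting double sum can then be reorganized along a single global parameter $h$ (tracking, for instance, the cumulative number of right-shrinks separating the variable leaf from the fixed part of the tree) so that the number of facets with parameter $h$ reduces, via a standard Vandermonde-type collapse, to $\binom{c+d}{h+d}$. Summing over $h$ yields the leading contribution $\sum_{h=1}^{c-d-1}\binom{c+d}{h+d}$. The subtractive term $-(c-d-1)\binom{c+d}{d}$ then arises by inclusion-exclusion, removing the $c-d-1$ degenerate configurations (one for each admissible value of $\ell$) in which the encoding assigns no nontrivial shrink-chain to the variable branch and thereby produces a candidate violating condition $(\ddagger)$.

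The main obstacle is pinning down the bijection with lattice-path-type data that makes the double sum collapse to this closed form: condition $(\ddagger)$, which forbids two disjoint intervals inside $(d+2,c+1)$, is the delicate ingredient that forces the subtractive correction, and most of the combinatorial work is devoted to disentangling it from the surrounding tree structure. Once the bijection is in place, the closed formula follows from routine binomial manipulations, for which I would cite the analogous calculation in \cite[Section~3]{Sammartano}; the degenerate case $d=0$ (where the scroll framework of \emph{loc.\,cit.} is trivial) is supplemented by the independent computation of $\mult(\mcF(L))$ carried out in Appendix~\ref{AppendixMultiplicities}, which matches the count above and thereby verifies the formula in this edge case.
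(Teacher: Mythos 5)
Your high-level strategy coincides with the paper's: the proof there also stratifies the facets by the position $\ell\in\{d+2,\dots,c\}$ of the variable leaf $(\ell,\ell+1)$, and then, rather than redoing any lattice-path combinatorics, simply applies the reflection $n\mapsto c+2-n$ to identify each stratum $\Sigma(\ell)$ with a set already counted in \cite[Lemma 3.11]{Sammartano}, obtaining $\mathrm{Card}\big(\Sigma(\ell)\big)=\binom{c+d}{\ell-1}-\binom{c+d}{d}$ and summing over the $c-d-1$ values of $\ell$. So the tree-plus-shrink-chain encoding you sketch is essentially the content of the cited lemma, and in that sense the plan is viable.

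However, the mechanism you propose for the subtractive term $-(c-d-1)\binom{c+d}{d}$ — and this is exactly where you say most of the combinatorial work would be concentrated — is wrong. You claim it arises by inclusion-exclusion, removing candidates that violate $(\ddagger)$. But once you restrict to the stratum $\Sigma(\ell)$ you have fixed the leaf set, and, as observed in the paper just before Proposition \ref{PropCaseE4}, for a family satisfying $(\dagger)$ condition $(\ddagger)$ holds if and only if it holds on the leaves; since only the single leaf $(\ell,\ell+1)$ lies in $(d+2,c+1)$, every tree with the prescribed leaves automatically satisfies $(\ddagger)$, and there is nothing of that kind to subtract. The correction $\binom{c+d}{d}$ per stratum is instead a ballot-type correction internal to the count of trees with prescribed leaves — the same phenomenon as the Catalan formula $\binom{2c-2}{c-1}-\binom{2c-2}{c}$ of Proposition \ref{PropCaseE3}, where $(\ddagger)$ is vacuous — i.e., it removes lattice paths that do not encode well-formed trees, via the reflection principle. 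A secondary problem: your fallback of ``verifying'' the $d=0$ case against the multiplicity computation of Appendix \ref{AppendixMultiplicities} is circular, since Lemma \ref{LemmaCriterionInitialComplex} requires the facet count of $\Delta_\mcF$ and the multiplicity of $\mcF(L)$ to be computed independently before they can be compared; the combinatorial count via \cite[Lemma 3.11]{Sammartano} applies verbatim for $d=0$ after the reflection, so no such supplement is needed for this proposition.
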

\begin{proof}
Let $\Sigma(\ell)$ denote the set of facets 
containing the leaf $(\ell, \ell+1)$,
where  $d+2 \leq \ell \leq c$.
Applying the reflection $n \mapsto c+2-n$ to the interval $[1,  c+1]$,
the sets $\Sigma(\ell)$ become of the form $\Sigma(\alpha, \beta_1, \gamma, \beta_2)$ as in \cite[Lemma 3.11]{Sammartano}.
More precisely,
$\Sigma(d+1)$ becomes  $\Sigma(c-d-2,d+2,0,0)$,
while $\Sigma(\ell)$ 
becomes  $\Sigma(c-\ell,1,\ell-d-2,d+1)$
for $d+2 \leq \ell \leq c$.
Applying  \cite[Lemma 3.11]{Sammartano}, we see that 
$\mathrm{Card}\big(\Sigma(\ell)\big) = {c+d \choose \ell-1} -{c+d \choose d} $
for all $ d+2 \leq \ell \leq c$.
Reindexing $h = \ell-1-d$ we obtain the desired formula.
\end{proof}

Theorem \ref{TheoremSpecialFiber}  is now a consequence of the following criterion.
It is a well-known application of the associativity formula for multiplicities, but we include a proof since we could not find one in the literature.
A simplicial complex is called \emph{pure} if all its facets have the same cardinality.

\begin{lemma}\label{LemmaCriterionInitialComplex}
Let $R$ be a polynomial ring, $\leq$  a monomial order,
and   $\mathcal{K} \subseteq R$   a homogeneous ideal.
   Let $\Delta$ be a pure simplicial complex whose Stanley-Reisner ideal $\mcI(\Delta) \subseteq R$ satisfies $\mcI(\Delta) \subseteq \mathrm{in}_\leq(\mathcal{K})$.
Suppose that  the rings 
$R/\mathcal{K}$ and $R/\mcI(\Delta)$ have the same Krull dimension and multiplicity. 
Then, $\mcI(\Delta) = \mathrm{in}_\leq(\mathcal{K})$.
\end{lemma}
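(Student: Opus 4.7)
Set $J := \mathrm{in}_\leq(\mathcal{K})$. The plan is to use the associativity formula for multiplicities to identify the minimal primes of $J$, and then to deduce that $J$ is radical and hence equal to $\mcI(\Delta)$. Since passage to the initial ideal preserves Hilbert functions, $R/J$ and $R/\mathcal{K}$ share dimension and multiplicity, so by hypothesis the same holds for $R/J$ and $R/\mcI(\Delta)$.

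The purity of $\Delta$ says that the minimal primes of $\mcI(\Delta)$ are exactly the facet primes $\mathfrak{p}_F = (T_i : i \notin F)$, all of the same dimension $\dim R/\mcI(\Delta)$. Each $R/\mathfrak{p}_F$ is a polynomial ring of multiplicity $1$, so $e(R/\mcI(\Delta))$ equals the number of facets of $\Delta$.

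Next I would analyze the top-dimensional minimal primes of $J$. Any such prime $\mathfrak{q}$ contains $\mcI(\Delta)$, hence some $\mathfrak{p}_F$; as $R$ is catenary and $\dim R/\mathfrak{q} = \dim R/\mathfrak{p}_F$, this forces $\mathfrak{q} = \mathfrak{p}_F$. Localizing $\mcI(\Delta) \subseteq J$ at such a $\mathfrak{p}_F$ traps $J_{\mathfrak{p}_F}$ between $\mathfrak{p}_F R_{\mathfrak{p}_F}$ and itself, so the local length $\ell_{R_{\mathfrak{p}_F}}(R_{\mathfrak{p}_F}/J_{\mathfrak{p}_F})$ is exactly $1$. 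The associativity formula therefore gives
\begin{equation*}
e(R/J) \,=\, \#\big\{F \text{ facet of } \Delta : \mathfrak{p}_F \text{ is a minimal prime of } J\big\},
\end{equation*}
and comparing with $e(R/\mcI(\Delta))$ forces every facet prime $\mathfrak{p}_F$ to be a minimal prime of $J$.

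To finish, I would rule out any further minimal prime of $J$: such a prime would contain some $\mathfrak{p}_F$ that is already minimal over $J$, a contradiction. Hence the minimal primes of $J$ coincide with the $\mathfrak{p}_F$, giving $\sqrt{J} = \bigcap_F \mathfrak{p}_F = \mcI(\Delta)$, and the chain $\mcI(\Delta) \subseteq J \subseteq \sqrt{J} = \mcI(\Delta)$ yields the desired equality. The delicate step is the local length calculation that forces $\ell_{R_{\mathfrak{p}_F}}(R_{\mathfrak{p}_F}/J_{\mathfrak{p}_F}) = 1$: this is what turns equality of multiplicities (a numerical datum) into the ideal-theoretic conclusion that $J = \mcI(\Delta)$.
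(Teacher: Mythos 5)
Your argument is correct and follows essentially the same route as the paper's: both exploit the purity of $\Delta$ together with the associativity formula for multiplicities to force the local length of $R_{\mathfrak{p}_F}/J_{\mathfrak{p}_F}$ to be $1$ at every facet prime. The only cosmetic difference is the globalization step at the end --- you conclude via $\mcI(\Delta)\subseteq J\subseteq\sqrt{J}=\mcI(\Delta)$ after identifying all minimal primes of $J$, whereas the paper checks the equality locally at the associated primes of the smaller ideal $\mcI(\Delta)$; the two are equivalent here.
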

\begin{proof}
Denote $\mathcal{J}= \mathrm{in}_\leq(\mathcal{K})$.
Then, $R/\mathcal{J}$ and $R/\mcI(\Delta)$ also have the same Krull dimension and multiplicity. 
Since   $\mcI(\Delta)\subseteq \mathcal{J}$,
 the equality
$\mcI(\Delta)= \mathcal{J}$ 
can be checked locally at the associated primes of the smaller ideal $\mcI(\Delta)$.
Since all  facets of $\Delta$ have the same size, 
all associated primes of $\mcI(\Delta)$ have the same height.
Since $\mathcal{J}$ and $\mcI(\Delta)$ have equal  height,
the associated primes of $\mathcal{J}$ of minimal height are also associated primes of $\mcI(\Delta)$.
Finally,
since  $R/\mathcal{J}$ and $R/\mcI(\Delta)$ have the same multiplicity,
 the associativity formula for multiplicities
\cite[Exercise 12.11.e]{Eisenbud}
implies that 
$\mcI(\Delta)_{\mathcal{P}}= \mathcal{J}_{\mathcal{P}}$  for all $\mathcal{P}\in \mathrm{Ass}(\mcI(\Delta))$.
\end{proof}

\begin{proof}[Proof of Theorem \ref{TheoremSpecialFiber}]
The special fiber ring $\mcF(L)$ and the Stanley-Reisner ring $P_\mcF/\mcI(\Delta_\mcF)$ have the same dimension and multiplicity.
For $P_\mcF/\mcI(\Delta_\mcF)$, these invariants are found in Propositions \ref{PropCaseE3}, \ref{PropCaseE4}, \ref{PropCountFacets},
where it is also shown that $\Delta_\mcF$ is pure.
For  $\mcF(L)$,
they are found in Propositions \ref{PropositionInvariantsFiberD0}  and \ref{PropositionInvariantsFiberD1}.
Moreover,   $\mcI(\Delta_\mcF)\subseteq \mathrm{in}_\preceq(\ker(\pi_\mcF))$ by Proposition \ref{PropLeadingMonomials}.
Now, apply Lemma \ref{LemmaCriterionInitialComplex} to conclude.
\end{proof}

\section{Equations of the Rees algebra}\label{SectionReesAlgebra}

In this section, we prove the following theorem:

\begin{thm}\label{TheoremReesAlgebraGB}
The Eagon-Northcott,
Plucker and Laplace relations \eqref{EqUpperEagonNorthcott},
\eqref{EqLowerEagonNorthcott},
\eqref{EqPluckerRelation},
\eqref{EqLaplaceNoLastColumn},
\eqref{EqLaplaceLastColumn}
 form a Gr\"obner basis of the defining ideal $\ker(\pi_\mcR)$ of the Rees algebra $\mcR(L)$
 with respect to  $\preceq$.
\end{thm}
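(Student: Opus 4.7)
The plan is to mirror the proof of Theorem~\ref{TheoremSpecialFiber}: construct a flag simplicial complex $\Delta_\mcR$ whose Stanley--Reisner ideal is generated by the leading monomials exhibited in Proposition~\ref{PropLeadingMonomials}, and then apply Lemma~\ref{LemmaCriterionInitialComplex} via a purity check together with a multiplicity calculation for $\mcR(L)$.

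Define $\Delta_\mcR$ on the vertex set of $P_\mcR$ by declaring $\mcI(\Delta_\mcR)$ to be generated by the quadratic $T$-monomials \eqref{EqLMPlucker} and \eqref{EqLMLaplace} used in the previous section, together with the bilinear monomials $v\,T_{\al,\be}$, with $v$ a $y$- or $x$-variable, obtained from $\LM(\UEN_{\al,\be,\ga})$ and $\LM(\LEN_{\al,\be,\ga})$. Each generator of $\ker(\pi_\mcR)$ listed in the statement contributes its leading monomial to $\mcI(\Delta_\mcR)$, so we immediately obtain $\mcI(\Delta_\mcR) \subseteq \mathrm{in}_\preceq(\ker(\pi_\mcR))$. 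By Lemma~\ref{LemmaCriterionInitialComplex}, the theorem reduces to showing that $\Delta_\mcR$ is pure of dimension $\dim\mcR(L) - 1 = 2d+e$, and that its number of facets equals $\mathrm{mult}(\mcR(L))$.

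The structural description of the facets proceeds in two steps. First, one observes that any face of $\Delta_\mcR$ restricted to the $T$-variables is a face $F$ of $\Delta_\mcF$, while the $S$-variables it contains form a set $G$ such that no EN leading monomial $v\,T_{\al,\be}$ has $v\in G$ and $T_{\al,\be}\in F$. Using Propositions~\ref{PropCaseE3}--\ref{PropCaseE4}, together with the explicit list of EN leading monomials in Proposition~\ref{PropLeadingMonomials}, one verifies that every facet of $\Delta_\mcR$ projects to a facet $F$ of $\Delta_\mcF$ and that the associated maximal compatible set $G(F)$ has cardinality independent of $F$; combined, these give purity of the expected dimension and yield an enumeration of facets by refining the count in Proposition~\ref{PropCountFacets}.

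On the algebraic side, $\dim\mcR(L) = \dim S + 1 = 2d + e + 1$ is standard, and by Lemma~\ref{LemmaConstantHilbertFunctionBlowups} both the Hilbert function and the multiplicity of $\mcR(L)$ depend only on $(c,d)$. One then computes $\mathrm{mult}(\mcR(L))$ either from the Eagon--Northcott resolutions of the powers $L^k$ established in \cite{BrunsConcaVarbaro}, or from a direct Hilbert-series specialization in the spirit of Appendix~\ref{AppendixMultiplicities}, and matches this to the combinatorial facet count to invoke Lemma~\ref{LemmaCriterionInitialComplex}.

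The main obstacle will be the combinatorial analysis of $\Delta_\mcR$: unlike the $\Delta_\mcF$ case, the purity statement and the enumeration of facets require tracking how the leaf configuration of the binary tree encoding $F$ interacts with the EN-compatibility of $y$- and $x$-variables. The subtle interface is near column $d+1$ of $\bfL$, where the leading monomials of $\UEN$ and $\LEN$ switch between $y$- and $x$-variables (see the case splits in Proposition~\ref{PropLeadingMonomials}), and a careful case analysis is needed to pin down $|G(F)|$ uniformly across facets $F$.
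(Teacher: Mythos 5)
Your plan diverges from the paper's actual strategy, and the divergence runs into two genuine obstacles. The paper applies the initial-complex-plus-multiplicity criterion (Lemma~\ref{LemmaCriterionInitialComplex}) \emph{only} in the special case $d=1$, precisely because for general $(d,e)$ no formula for $\mult(\mcR(L))$ is available; for $d=1$ the multiplicity can be extracted from Conca's explicit results on powers of the ideal of the rational normal curve (this is Proposition~\ref{PropositionMultiplicityReesD1}, itself a nontrivial computation). Your proposal defers this to ``either the Eagon--Northcott resolutions of the powers $L^k$ or a direct Hilbert-series specialization,'' but this is exactly the missing ingredient: while \cite{BrunsConcaVarbaro} guarantees the Hilbert function of $\mcR(L)$ depends only on $(c,d)$, turning that into a closed-form multiplicity for arbitrary $d$ is not done anywhere and is not a routine step. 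The paper avoids it entirely by proving the general case via Buchberger's criterion: the relations generate $\ker(\pi_\mcR)$ by the fiber type property, and every linear S-pair is reduced to zero by localizing its column support into a previously settled case ($d=1$, a finite $c\le 5$ computation homogeneous for a column multigrading, or Theorem~\ref{TheoremSpecialFiber}), with one exceptional bidegree-$(2,1)$ family of S-pairs handled by an explicit second syzygy of the Eagon--Northcott complex.

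The second gap is combinatorial and is already visible in the case the paper does work out. You assert that every facet of $\Delta_\mcR$ restricts on the $T$-variables to a facet of $\Delta_\mcF$ and that the complementary set $G(F)$ of $x$- and $y$-vertices has cardinality independent of $F$. Both claims fail for $d=1$: in Case 1 of Proposition~\ref{PropositionDeltaReesD1} the facets are of the form $\{y_1,y_2,T_{1,2},x_2,\dots,x_{\beta-1},T_{1,\beta},\dots,T_{1,e+1}\}$, whose $T$-part is a chain of nested intervals rather than a full binary tree with leaves $(i,i+1)$, and whose number of $x$-vertices varies with $\beta$; the same happens in Cases 4 and 5. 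Purity does hold, but only because the loss of $T$-vertices is compensated vertex-for-vertex by $x$- and $y$-vertices, which requires the five-way case analysis of that proposition rather than a product decomposition. For general $d$ the interaction of the $y_{1,\beta}$, $y_{2,\alpha}$, and $x_h$ leading monomials near column $d+1$ would make this analysis substantially harder, which is exactly why the paper does not attempt it. To repair your plan you would need either to carry out both the facet enumeration and the multiplicity computation for all $(d,e)$, or to adopt the paper's reduction to $d=1$ and S-pair analysis.
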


The strategy we adopt here is different from the one of Section \ref{SectionSpecialFiber}, for two reasons:
the initial  complex of $\mcR(L)$ is considerably more convoluted than that of $\mcF(L)$,
and 
 a formula for the multiplicity of $\mcR(L)$ is not explicitly known.
First, we will use the criterion of Lemma 
\ref{LemmaCriterionInitialComplex}
to verify the special case $d=1$, 
where the simplicial complex is relatively simple and the multiplicity of $\mcR(L)$ 
can be derived from results on rational normal curves \cite{Conca,Hoang}.
This is enough to deduce the general case, 
thanks to the fiber type property \cite{BrunsConcaVarbaro}
and the observation the all S-pairs reductions follow from the special cases $d=1$, $c \leq 5$, or  Theorem \ref{TheoremSpecialFiber}
(with one  exception).

\subsection{The case $d=1$}\label{SubsectionReesCaseD1}
In this subsection, we use Notation \ref{NotationBlowupSpecial} where additionally we fix $d=1$, so $c=e$.
For simplicity, we rename $y_{1,1}=y_1, y_{2,1}=y_2$, so $L $ is the ideal of minors of 
$$
\bfL=
\begin{pmatrix}
y_1 & x_1 & x_2 & \cdots & x_{e-1} & x_{e}   \\
y_2 &x_2 & x_3 & \cdots & x_{e} & 0 
\end{pmatrix}.
$$

Let $\Delta_\mcR$ be the flag simplicial complex defined by all the squarefree quadratic monomials determined in  Proposition \ref{PropLeadingMonomials}, that is, the complex whose Stanley-Reisner ideal is
\begin{align*}
\mcI(\Delta_\mcR) =& 
\big(
T_{\al,\ga} T_{\be,\de}\, \mid 1\leq \alpha  < \be < \ga < \de \leq e+1\big)+
\big(
T_{\al,\be} T_{\ga,\de}\, \mid 3\leq \alpha  < \be \leq \ga < \de \leq e+1\big)
\\
&
+\big(x_{\ga} T_{\al,\be}\, \mid \, 
1 \leq \al < \be \leq \ga \leq e,\, \be \geq 3\big) + \big(x_1 T_{1,\ga}\, \mid \, 3 \leq \ga \leq e+1\big)
\\
&
+ \big(y_{2} T_{\be,\ga}\, \mid \,  2 \leq \be < \ga \leq e+1\big) +
\big(x_{\al} T_{\be,\ga}\, \mid \,  2 \leq \al < \be < \ga \leq e+1\big). 
\end{align*}

\begin{prop}\label{PropositionDeltaReesD1}
If $e \geq 4$ then
 $\Delta_\mcR$
has $2^{e+2}-(e+1)^2-3$ facets, each containing $e+3$ elements.
\end{prop}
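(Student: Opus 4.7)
The plan is to describe the facets of the flag complex $\Delta_\mcR$ explicitly and count them via a case analysis. A preliminary observation is that $y_1$ does not occur in any monomial generator of $\mcI(\Delta_\mcR)$, so $y_1$ is a cone vertex of $\Delta_\mcR$ and lies in every facet. A direct inspection of the Stanley-Reisner generators also yields the key compatibility rules: $\{y_2, T_{\al,\be}\}$ is a face if and only if $\al=1$; $\{x_1,T_{\al,\be}\}$ is a face if and only if $\al\geq 2$ or $\be=2$; and for $i\geq 2$, $\{x_i,T_{\al,\be}\}$ is a face if and only if $T_{\al,\be}=T_{1,2}$ or $\al\leq i\leq\be-1$. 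Moreover, $y_1,y_2$, and all the $x_i$ are pairwise compatible with each other.

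I would then stratify the facets $F$ of $\Delta_\mcR$ according to whether $y_2\in F$. In Case A ($y_2\in F$), the $y_2$-rule forces the $T$-part of $F$ to lie in $\{T_{1,2}, T_{1,3},\ldots,T_{1,e+1}\}$; these variables are pairwise compatible, since no Plucker or Laplace non-face involves two intervals with the same left endpoint. A short maximality argument using the $x_1$- and $x_i$-rules then pins down exactly $e$ facets, each of cardinality $e+3$: the ``full'' facet $\{y_1,y_2,x_1,\ldots,x_e,T_{1,2}\}$, and one facet $\{y_1,y_2,x_2,\ldots,x_{\ga_0-1},T_{1,2},T_{1,\ga_0},T_{1,\ga_0+1},\ldots,T_{1,e+1}\}$ for each $\ga_0\in\{3,\ldots,e+1\}$.

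In Case B ($y_2\notin F$), the $T$-part $\mathcal{T}$ is a face of $\Delta_\mcF$, and by Proposition \ref{PropCaseE4} any facet of $\Delta_\mcF$ containing $\mathcal{T}$ has the structure of a rooted binary tree of nested intervals with a distinguished ``right-leaf'' $(\ell,\ell+1)$. The $x_i$-compatibility rule translates into the statement that $x_i$ can be added to $F$ precisely when $i$ is covered by a suitable leaf-interval of this tree structure. I would parametrize each Case-B facet by a decorated tree on $[1,e+1]$ -- the tree records the $T$-part while the decoration records the set of chosen $x_i$ -- and stratify the enumeration by the position of the distinguished leaf, reducing the count to a sum closely related to the facet count of Proposition \ref{PropCountFacets} together with an extra factor for the $x$-decorations.

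The main obstacle is executing the Case-B enumeration: one must rigorously catalogue the decorated trees, verify that maximality forces every facet to have exactly $e+3$ elements (establishing purity), and simplify the resulting sum. The analysis parallels the tree-based combinatorics of \cite[Section 3]{Sammartano} and the proofs of Propositions \ref{PropCaseE3}, \ref{PropCaseE4}, and \ref{PropCountFacets} in the present paper, with additional bookkeeping for the $x_i$-decorations. Combining the Case-B count with the $e$ facets from Case A, I expect the total to simplify to $2^{e+2}-(e+1)^2-3$, as claimed.
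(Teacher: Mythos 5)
Your setup is sound and coincides with the paper's: the compatibility rules you extract from $\mcI(\Delta_\mcR)$ are all correct, $y_1$ is indeed a cone vertex (so is $T_{1,2}$, which you do not note), and your Case A reproduces the paper's first case exactly, yielding the $e$ facets of size $e+3$ that contain $y_2$. The difficulty is Case B, which accounts for all but $e$ of the $2^{e+2}-(e+1)^2-3$ facets and which you leave unexecuted; moreover, the plan you sketch for it rests on an inaccurate structural claim. You assert that $x_i$ can be added to $F$ ``precisely when $i$ is covered by a suitable leaf-interval of this tree structure.'' By your own compatibility rule, $x_i$ (for $i\geq 2$) is compatible with $T_{\al,\be}\neq T_{1,2}$ if and only if $\al\leq i\leq\be-1$, so $x_i\in F$ forces \emph{every} interval of the $T$-part of $F$ other than $(1,2)$ to contain the gap between columns $i$ and $i+1$. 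Consequently, as soon as some $x_h$ with $h\geq 2$ lies in $F$, the $T$-part cannot be, or sit inside, a full binary tree of nested intervals: it collapses to a saturated chain, and the chosen $x$-vertices with index $\geq 2$ form a consecutive run. This dichotomy --- $T$-part equal to a facet of $\Delta_\mcF$ with at most $x_1$ adjoined, versus $T$-part a chain accompanied by a run of $x$'s --- is precisely what makes the count tractable, and a ``decorated tree'' parametrization in which the tree is always a facet of $\Delta_\mcF$ does not see it.

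The paper resolves Case B by splitting into four subcases according to the $x$-vertices present: no $x$'s at all (the $T$-part is a facet of $\Delta_\mcF$ for the KW type $(1;e)$, contributing $2^{e+1}-2-e(e+1)$ facets via Proposition \ref{PropCountFacets}); only $x_1$ (the $T$-part reduces to a facet for the KW type $(\emptyset;e)$, contributing $2^{e-1}-e$); a run $x_{a+1},\dots,x_{a+b}$ with no $x_1$ (the $T$-part is a chain, counted directly as $2^{e}-e-1$); and both $x_1$ and such a run ($2^{e-1}-1$). Purity is verified separately in each subcase. To complete your argument you would have to carry out an equivalent analysis; as written, the proposal establishes only the $e$ facets of Case A, and the final total is asserted rather than proved.
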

\begin{proof}
Let 
$
\mcW = \{y_1, y_2, x_1,\ldots, x_e, T_{1,2}, T_{1,3}, \ldots, T_{e,e+1}\}
$
be the vertex set of $\Delta_\mcR$.
Since $y_1$ and $T_{1,2}$ do not appear in any generator of $\mcI(\Delta_\mcR)$, 
they  belong to  every facet. 
If a face $F$  contains $x_h$ for some $h>1$, 
then $F$ does not contain   any $T_{\al,\be}$ with  $1 \leq \al < \be \leq h$ and $(\al,\be) \ne (1,2)$, nor any  $T_{\be,\ga}$ with $h <\be  <\ga \leq e+1$.
It follows that if $F$ contains $x_{h_1}, x_{h_2}$ with $1 < h_1<h_2$, then $F\cup \{x_{h_1+1}, \ldots, x_{h_2-1}\}$ is also a face.

We will now count the   facets according to 5 different cases.
Let $F$ be a facet.

\underline{Case 1:  $y_2 \in F$.} 
Then,  $T_{\be,\ga}\notin F$ for all $2 \leq \be < \ga \leq e+1$,
so any $T$-vertex appearing in $F$ must be of the form 
$T_{1,\varepsilon}$.
If $T_{1,\beta} \in F$ for some $\beta >2$, then 
$x_\gamma \notin F$ for all $\gamma \geq \beta$;
it follows that if $\beta \leq e$ then $F \cup \{T_{1,\beta+1}\}$ is also a face, so $T_{1,\beta+1}\in F$.
If $T_{1,\beta} \notin F$ for all $\beta > 2$, then
$F\cup\{x_1\}$ is also a face, so $x_1 \in F$.
We conclude that the facets are of the form 
$\{y_1,y_2,T_{1,2},x_2,\dots,x_{\be-1},T_{1,\be},\dots,T_{1,e+1}\}$ for $3 \leq \be \leq e+1$ and $\{y_1,y_2,T_{1,2},x_1,\dots,x_e\}$. 
This gives a total of $e$ facets, all of size $e+3$.

For the remaining cases we assume  $y_2 \notin F$.

\underline{Case 2:  $x_h \notin F$ for every $h$.} 
The only relevant monomials are those  involving the $T$-variables only. 
It follows that $F=\{y_1\}\cup F'$ where $F'$ is a facet of 
$\Delta_\mcF$ for the KW type $(1;e)$. 
By Proposition \ref{PropCaseE4}, all such facets $F$ have size $e+3$,
and by Proposition \ref{PropCountFacets}
their number  is 
$$
\sum_{h=1}^{e-2}{e+1 \choose h+1} -(e-2)(e+1)=
2^{e+1} -2 - 2(e+1) -(e-2)(e+1)
=
2^{e+1}-2-e(e+1).
$$

\underline{Case 3:  $x_1 \in F$ and $x_h \notin F$ for every $h>1$.} 
Then, $T_{1,\ga}\notin F$ for all $3 \leq \ga \leq c$. 
For any $T_{\al,\be}\in F$ with $(\al,\be) \ne (1,2)$ we 
have $(\al,\be) \subseteq (2,e+1)$;
it follows that  $F=\{x_1,y_1,T_{1,2}\} \cup F'$ where $F'$
corresponds to a facet of $\Delta_\mcF$ for the KW type $(\emptyset;e)$. 
Again, 
by Proposition \ref{PropCaseE4} all such facets $F$ have size $e+3$,
and by Proposition \ref{PropCountFacets}
their number  is 
$
\sum_{h=1}^{e-2}{e -1\choose h} -(e-2)=
2^{e-1} -2 - (e-2)
= 2^{e-1}-e.
$

\underline{Case 4:  $x_1 \notin F$ and  $x_h\in F$ for some $h>1$.} 
As noted in the beginning, 
the $x$-vertices contained in $F$ are  $x_{a+1},\dots,x_{a+b}$,
for some $1 \leq b \leq e-1, 1\leq a \leq e-b$.
Then, for each  $T_{\al,\be}\in F\setminus\{T_{1,2}\}$ we have $\alpha \leq a+1$ and $ \beta > a+ b$,
and by condition $(\dagger)$ in Section \ref{SectionSpecialFiber}
the intervals $(\al,\be)$ form a (saturated) chain with respect to inclusion.
Counting the number of such chains is simple: 
we start at $T_{1,e+1}$, 
 then we  choose either $T_{2,e+1}$ or $T_{1,e}$, and so on,
until  we choose $T_{\al,\be}$ such that $\be - \al = b$.
For each $b$ we make   $e-b$ choices, 
and we  exclude the set of choices $T_{1,e+1}\to T_{1,e}\to  \cdots \to  T_{1,b+2} \to T_{1,b+1}$
since we must end at $T_{a+1,a+b+1}$ for some $a \geq 1$.
To summarize,  for each $1 \leq b \leq e-1$ we have $2^{e-b} -1$ facets, for a total of 
$
\sum_{b=1}^{e-1} (2^{e-b}-1) = (2^e - 2) - (e-1) = 2^e-e-1
$
facets.
Each facet contains the vertices $T_{1,2},y_1$ and then  $b$ of the $x$-vertices and $e-b+1$ more $T$-vertices,
for a total of $e+3$ vertices.

\underline{Case 5:  $x_1 \in F$ and $x_h \in F$ for some $h>1$.} 
The $x$-vertices  in $F$ are now $x_1, x_{a+1}, x_{a+2}, \ldots, x_{a+b}$
for some $1 \leq b \leq e-1, 1\leq a \leq e-b$.
We argue as in Case 4 to count these facets, 
the main difference is that $T_{1,e+1}\notin F$ and instead we start from $T_{2,e+1}$. 
This time we have there are $2^{e-1- b}$ sets of choices  for each $1 \leq b \leq e-1$,
and all of them are valid,
for a total of 
$
\sum_{b=1}^{e-1} 2^{e-1-b} = 2^{e-1} -1
$
facets.
Trading $T_{1,e+1}$ for $x_1$, we see again that all facets have $e+3$ elements.

In conclusion, the total number of facets of $\Delta_\mcR$ is
$$
e + \big(2^{e+1} -2-e(e+1)\big) + \big(2^{e-1}-e\big) + 
\big(2^{e} - e-1\big) + \big(2^{e-1}-1\big) = 2^{e+2} - (e+1)^2 - 3. \qedhere
$$
\end{proof}

\begin{proof}[Proof of Theorem \ref{TheoremReesAlgebraGB} for  $d=1, e \geq 4$]
It follows from Propositions 
\ref{PropositionDeltaReesD1}, \ref{PropositionMultiplicityReesD1},
and Lemma \ref{LemmaCriterionInitialComplex}.
\end{proof}

\subsection{The general case}
In this subsection, we complete the proof of Theorem \ref{TheoremReesAlgebraGB}.

\begin{prop}\label{PropositionFiberType}
The Eagon-Northcott,
Plucker and Laplace relations \eqref{EqUpperEagonNorthcott},
\eqref{EqLowerEagonNorthcott},
\eqref{EqPluckerRelation},
\eqref{EqLaplaceNoLastColumn},
\eqref{EqLaplaceLastColumn}
generate the defining ideal $\ker(\pi_\mcR)$ of the Rees algebra $\mcR(L)$.
\end{prop}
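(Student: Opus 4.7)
The plan is to leverage the fiber type property of 2-determinantal ideals, which is the engine that reduces the problem to two independent and already-addressed pieces. By the work of Bruns-Conca-Varbaro \cite{BrunsConcaVarbaro}, every 2-determinantal ideal is of fiber type; in particular $L$ is of fiber type. Hence $\ker(\pi_\mcR)$ is generated by its components in bidegrees $(\ast,1)$ and $(0,\ast)$, and it is enough to exhibit generators of each piece separately.

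For the bidegree $(0,\ast)$ component, the identification $[\ker(\pi_\mcR)]_{(0,\ast)} = \ker(\pi_\mcF)$ is built into the commutative diagram in Section~\ref{SectionPreliminaries}. Theorem \ref{TheoremSpecialFiber} then produces a Gr\"obner basis of $\ker(\pi_\mcF)$ consisting precisely of the Plucker relations \eqref{EqPluckerRelation} and the Laplace relations \eqref{EqLaplaceNoLastColumn}, \eqref{EqLaplaceLastColumn}, so these relations certainly generate the $(0,\ast)$ part of $\ker(\pi_\mcR)$.

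For the bidegree $(\ast,1)$ component, the standard identification of this graded piece with the module of first syzygies of the generators $\delta_1,\ldots,\delta_m$ of $L$ (i.e., linear forms in $S$ tensored with a first-syzygy relation give rise to a bidegree $(\ast,1)$ element of $\ker(\pi_\mcR)$, and vice versa) reduces the task to generating the first syzygies of $L$. Since $\bfL$ is a concatenation of scroll and Jordan blocks with distinct eigenvalues, Proposition \ref{PropKWNormalFormMaximalCodimension} gives $\codim(L) = c$, so the Eagon-Northcott complex of $\bfL$ is a minimal free resolution of $S/L$. The plan is to read off the first syzygies directly from the second term of this complex: they are indexed by $3$-subsets $\{\alpha,\beta,\gamma\}\in\binom{[c+1]}{3}$, and for each such subset the complex produces two syzygies, namely the ones obtained by duplicating the upper and lower row of the submatrix $(C_\alpha,C_\beta,C_\gamma)$. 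These are, by construction, exactly the relations $\UEN_{\alpha,\beta,\gamma}$ and $\LEN_{\alpha,\beta,\gamma}$ of \eqref{EqUpperEagonNorthcott} and \eqref{EqLowerEagonNorthcott}.

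There is no real obstacle here: once fiber type is invoked, both pieces are entirely standard. The only small point worth making explicit is that the $2\binom{c+1}{3}$ upper and lower Eagon-Northcott relations are literally the images of a basis of the second term of the Eagon-Northcott complex, not merely some spanning set of the syzygies, which is a direct consequence of the module-theoretic definition of that complex. Combining the two generating sets yields the proposition.
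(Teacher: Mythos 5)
Your proposal is correct and follows exactly the same route as the paper: invoke the fiber type property of \cite{BrunsConcaVarbaro} to split $\ker(\pi_\mcR)$ into its $(0,\ast)$ and $(\ast,1)$ pieces, handle the former via Theorem \ref{TheoremSpecialFiber}, and handle the latter via the fact that the Eagon-Northcott complex resolves $S/L$ minimally, so its second term gives exactly the relations \eqref{EqUpperEagonNorthcott} and \eqref{EqLowerEagonNorthcott}. You simply spell out the standard identifications that the paper leaves implicit.
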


\begin{proof}
It follows from Theorem \ref{TheoremSpecialFiber}, the fiber type property \cite[Theorem 3.7]{BrunsConcaVarbaro},
and the well-known fact that the Eagon-Northcott relations generate all the first syzygies of a 2-determinantal ideal.
\end{proof}

Thus, 
by Buchberger's criterion, it suffices to show that  all S-pairs reduce to 0.
It is known that an S-pair $S(F,G)$ reduces to 0 if $\gcd\big(\LM(F),\LM(G)\big)=1$.
Since $\ker(\pi_\mcR)$ is generated by quadrics, 
we then only need to analyze \emph{linear} S-pairs,
i.e. S-pairs of the form $S(F,G)= aF-bG$ where $a,b$ are terms of degree 1.
We are going to show  that various subsets of the generators of $\ker(\pi_\mcR)$ are themselves Gr\"obner bases;
we have already seen one instance in Theorem \ref{TheoremSpecialFiber}.

\begin{lemma}\label{LemmaGrobnerPluUen}
The upper Eagon-Northcott  
\eqref{EqUpperEagonNorthcott}
 and 
Plucker relations 
\eqref{EqPluckerRelation}
form a Gr\"obner basis.
\end{lemma}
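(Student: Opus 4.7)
The strategy is to apply Buchberger's criterion to the quadratic generating set $\{\UEN_{\al,\be,\ga}\} \cup \{\PLU_{\al,\be,\ga,\de}\}$. Since any S-pair whose leading monomials are coprime reduces to zero automatically, it suffices to consider the \emph{linear} S-pairs, i.e., those arising from generators whose leading monomials share a common variable. By Proposition \ref{PropLeadingMonomials}, each $\LM(\UEN_{\al,\be,\ga})$ is the product of a single linear form from the first row of $\bfL$ times a single $T$-variable, while each $\LM(\PLU_{\al,\be,\ga,\de}) = T_{\al,\ga}T_{\be,\de}$ is an antidiagonal product. This splits the verification into three cases: PLU--PLU, UEN--UEN, and UEN--PLU.

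For the PLU--PLU S-pairs, I would invoke the classical result (see Sturmfels, \emph{Algorithms in Invariant Theory}, or the exposition in \cite{BrunsConcaVarbaro}) that the Plucker relations themselves form a Gröbner basis of the Plucker ideal with respect to any antidiagonal term order. Since, by \eqref{EqLMPlucker}, the restriction of $\preceq$ to $\kk[T_{\al,\be}]$ selects precisely the antidiagonal monomials, all such S-pairs reduce to zero. Alternatively, this is already contained in Theorem \ref{TheoremSpecialFiber} in the case $e \leq 3$ where Laplace relations are absent.

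For the UEN--UEN S-pairs, there are two subcases. If two leading monomials share the same first-row linear form $\ell_{1,*}$, then $\ell_{1,*}$ divides out of the S-polynomial and what remains is a quadratic polynomial in the $T$-variables that belongs to the Plucker ideal, so the PLU case applies. If instead they share a common $T$-variable, the S-polynomial is of bidegree $(2,1)$ and admits an explicit rewriting as a linear combination of $\ell_{1,*}$-multiples of further UEN relations, essentially because the identity being encoded is a $3 \times 3$ Laplace expansion of a matrix with a repeated first row. For the UEN--PLU S-pairs, the shared variable is necessarily the $T$-factor of the UEN and one of the two antidiagonal $T$-factors of the PLU; the resulting S-polynomial has bidegree $(1,2)$ and reduces by a single further UEN combined with a Plucker relation, reflecting the compatibility between the two types of identities arising from the $2 \times (c+1)$ matrix.

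The principal obstacle is the case analysis in the UEN--UEN (shared $T$-variable) and UEN--PLU situations: by Proposition \ref{PropLeadingMonomials}, the leading monomial of $\UEN_{\al,\be,\ga}$ has three distinct shapes depending on whether $\be \leq d$, $\be = d+1$, or $\be \geq d+2$, which yields several positional branches. In each branch one must also verify that the monomials appearing in the reduction are strictly smaller than the S-pair under $\preceq$; this requires tracing the four-step hierarchy defining $\preceq$ (the lexicographic $y_2$-step, the two multidegree steps, and the final lexicographic step), and I expect the most delicate point to be the UEN--PLU case when the UEN's leading $T$-factor is $T_{\al,\ga}$ rather than $T_{\al,\be}$, since there the step (3) comparison using $\mdeg_2$ has to be checked carefully against the antidiagonal PLU monomial.
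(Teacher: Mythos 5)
Your overall plan (Buchberger's criterion, restriction to linear S-pairs, three cases PLU--PLU, UEN--UEN, UEN--PLU) is reasonable, but it diverges substantially from the paper's argument, and as written it contains a concrete error and leaves the hard part undone. The paper does \emph{not} carry out the case analysis by hand. Instead it observes that the $\UEN$ and $\PLU$ relations are homogeneous for the column multigrading $\mdeg(T_{\al,\be})=\bfe_\al+\bfe_\be$, $\mdeg(y_{1,\al})=\bfe_\al$, $\mdeg(x_\al)=\bfe_{\al+d}$, so that any linear S-pair involves at most $5$ distinct columns of $\bfL$; after recording whether the special column $d+1$ (where $x_1$ lives) is among them, every such configuration already occurs in one of the seven matrices with $6$ columns, i.e.\ the KW types $(1^6;\emptyset),\dots,(\emptyset;6)$. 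A Macaulay2 verification at $c=5$ over $\Z$ then produces $\mdeg$-homogeneous reduction equations that transfer verbatim to the general matrix. Your approach trades this finite computation for explicit standard expressions, which is legitimate in principle but is precisely the work your write-up defers: you never exhibit the reduction equations nor check, branch by branch, that every monomial occurring in them is $\prec$ the S-pair's leading monomial, and you yourself identify this as the ``principal obstacle.''

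Beyond the deferral, one step is wrong as stated. In the UEN--UEN subcase where the two leading monomials share the linear form $\ell_{1,*}$, you claim that ``$\ell_{1,*}$ divides out of the S-polynomial and what remains is a quadratic polynomial in the $T$-variables that belongs to the Plucker ideal.'' It does not: if, say, $\LM(\UEN_{\al,\be,\ga})=\ell_{1,\ga}T_{\al,\be}$ and $\LM(\UEN_{\al',\be',\ga})=\ell_{1,\ga}T_{\al',\be'}$, the S-polynomial is
$T_{\al',\be'}\bigl(\ell_{1,\al}T_{\be,\ga}-\ell_{1,\be}T_{\al,\ga}\bigr)-T_{\al,\be}\bigl(\ell_{1,\al'}T_{\be',\ga}-\ell_{1,\be'}T_{\al',\ga}\bigr)$,
whose terms carry the \emph{other} linear entries $\ell_{1,\al},\ell_{1,\be},\ell_{1,\al'},\ell_{1,\be'}$ and is not divisible by $\ell_{1,\ga}$; its reduction genuinely mixes $\UEN$ and $\PLU$ relations (compare the worked example in the paper, where a single UEN--PLU S-pair needs three $\UEN$'s and one $\PLU$ in its standard expression, not ``a single further UEN combined with a Plucker relation''). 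Your treatment of the PLU--PLU case via the classical antidiagonal Gr\"obner basis of the Pl\"ucker ideal is fine, since \eqref{EqLMPlucker} shows $\preceq$ selects the antidiagonal terms. To salvage the rest you would either have to write out and order-check the standard expressions in each positional branch ($\be\le d$, $\be=d+1$, $\be\ge d+2$), or adopt the paper's localization-to-few-columns argument, which is what makes the finite verification suffice.
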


\begin{proof}
Let $\mathcal{G}=\{\PLU_{\al,\be,\ga,\de}\mid \, 1 \leq \al < \be < \ga < \de \leq c+1\}
\cup
\{\UEN_{\al,\be,\ga}\mid \, 1 \leq \al < \be < \ga \leq c+1 \}$.
The polynomials in $\mathcal{G}$ are homogeneous with respect to the $\mathbb{N}^{c+1}$-multigrading  $\mdeg(T_{\al,\be})=\bfe_\al+\bfe_\be$,
$\mdeg(y_{1,\al})=\bfe_\al$, and 
$\mdeg(x_\al)=\bfe_{\al+d}$,
which keeps track of the column indices of the relations.

With the help of \cite{M2},  we verify the statement  when $c=5$,
that is, for the matrices of KW types
$(1^6;\emptyset), (1^5;1), (1^4;2), (1^3;3), (1^2;4),  (1;5), (\emptyset;6)$.
We perform the computation over the base ring $\kk=\mathbb{Z}$, which implies it for any field of any characteristic.
But these 7 KW types imply the arbitrary case, since they contain all possible column configurations for linear S-pairs.
More specifically,  consider the arbitrary case, we show that any S-pair in the set $\mathcal{G}$ reduces to 0 modulo $\mathcal{G}$.
By Theorem \ref{TheoremSpecialFiber},
we do not need to consider S-pairs among Plucker relations.
If we have a linear S-pair   $S(\PLU_{\al,\be,\ga,\de},\UEN_{\varepsilon,\phi, \psi})$, then the $T$-variable in $\LM(\UEN_{\varepsilon,\phi, \psi})$ also appears in 
$\LM(\PLU_{\al,\be,\ga,\de})$, 
so the number $p$ of distinct  column indices  among $\al,\be,\ga,\de,\varepsilon,\phi, \psi$ 
is either $p=4$ or $p=5$.
Since the variable $x_1$ behaves differently from the others in the monomial order $\preceq$, 
we need to distinguish whether or not the column $d+1$ appears among the $p$ indices.
In any case, this configuration of columns of $\bfL$ already occurs for one of the  7 matrices $\bfL'$ with 6 columns.
Since the claim holds for $c=5$, there exists an equation of reduction  of the S-pair to 0 for $\bfL'$,
and this equation can be chosen to be homogeneous with respect to $\mdeg(\cdot)$, in other words,
containing only relations involving the $p$ column indices. 
The same equation holds for the original $p$ columns of $\bfL$,
so $S(\PLU_{\al,\be,\ga,\de},\UEN_{\varepsilon,\phi, \psi})$ 
reduces to 0 modulo $\mathcal{G}$.
The case of the linear S-pair  $S(\UEN_{\al,\be,\ga},\UEN_{\varepsilon,\phi, \psi})$
 is completely analogous.
\end{proof}

\begin{example}
We illustrate the idea of Lemma \ref{LemmaGrobnerPluUen} 
with an example in the case $(d,e) = (3, 7)$.
Let
$$
\bfL_{3,7}
=
\begin{pmatrix}
y_{1,1} & y_{1,2} &   y_{1,3} & x_1 & x_2 & x_3 & x_4 & x_5 & x_6 & x_7   \\
y_{2,1} & y_{2,2} & y_{2,3} &x_2 & x_3 & x_4 & x_5 & x_6 & x_7 &  0 
\end{pmatrix},
$$
and consider the S-pair  between
$\PLU_{2,3,5,8}$ and $\UEN_{2,5,10}$.
The leading terms are   $\LT(\PLU_{2,3,5,8}) = -T_{2,5}T_{3,8}$
and $\LT(\UEN_{2,5,10}) = x_7 T_{2,5}$,
cf. Section \ref{SectionBlowupAlgebras}.
Thus, the S-pair is
\begin{align*}
f=S(\PLU_{2,3,5,8},\UEN_{2,5,10})  &=  T_{3,8} \UEN_{2,5,10} + x_7 \PLU_{2,3,5,8}\\
&= y_{1,2}T_{5,10}T_{3,8}-x_2T_{2,10}T_{3,8}+x_7T_{2,3}T_{5,8}+x_7T_{2,8}T_{3,5}.
\end{align*}
The  cubic polynomial $f$ is homogeneous with respect to the multigrading $\mdeg(\cdot)$, specifically,
$$
\mdeg(f)
= \bfe_2+\bfe_3+\bfe_5+\bfe_8+\bfe_{10} \in \mathbb{N}^{10},
$$
indicating that each monomial in the support of $f$ covers the columns 2, 3, 5, 8, 10 of $\bfL_{3,7}$.
Moreover, the variables $y_{i,j},x_h$ in $f$ come from  the first row of $\bfL_{3,7}$.
It follows that $f$ can also be realized as the S-pair 
$f'=S(\PLU_{1,2,4,5},\UEN_{1,4,6})$
for the matrix
$$
\bfL'
=
\begin{pmatrix}
y_{1,2} &   y_{1,3} &x_1 &  x_2 & x_5 &  x_7   \\
y_{2,2} & y_{2,3} &x_2& x_5 &x_7  &  0 
\end{pmatrix},
$$
since the entries in the second row do not play any role. 
More precisely, $f$ is obtained from $f'$ by changing the indices of the $T_{\al,\be}$ by 
 $1 \to 2,\, 2\to 3,\, 4\to 5,\, 5\to 8,\, 6 \to 10$.
Up to renaming variables, $\bfL'$ can be identified with the matrix $\bfL_{2,4}$, obtained for $(d,e)=(2,4)$.
Moreover, it follows from  Subsection \ref{SubsectionMonomialOrder} that, 
under this identification,
the monomial order used for $\bfL'$, in the case $(d,e)=(2,4)$, is a restriction of the 
monomial order used for $\bfL_{4,7}$, in the case $(d,e)=(3,7)$.
Now, assuming that Pl\"ucker relations and upper Eagon-Northcott relations of $\bfL'$ form a Gr\"obner basis,
there is an $\mdeg$-homogeneous equation of reduction of $f'$ to zero. 
The fact that it is $\mdeg$-homogeneous indicates that this equation only involves the  columns 1, 2, 4, 5, 6 of $\bfL'$. 
Specifically, in this example, one such equation is 
$$
f' = S(\PLU_{1,2,4,5},\UEN_{1,4,6})  = 
T_{1,6}\UEN_{2,4,5}+ T_{2,4}\UEN_{1,5,6}+ 
T_{4,5}\UEN_{1,2,6}-y_{1,2}\PLU_{2,4,5,6}.
$$
But we can consider the corresponding equation for the matrix $\bfL_{3,7}$,
 by changing the column indices
 $1 \to 2,\, 2\to 3,\, 4\to 5,\, 5\to 8,\, 6 \to 10$,
and we obtain  an equation of reduction to 0 for the original S-pair:
$$
f = S(\PLU_{2,3,5,8},\UEN_{2,5,10})= T_{2,10}\UEN_{3,5,8}+ T_{3,5}\UEN_{2,8,10}+ T_{5,8}\UEN_{2,3,10}-y_{1,2}\PLU_{3,5,8,10}.
$$
Note that the reason we included the third column   in $\bfL'$ is to guarantee that the monomial order for $\bfL'$ is a restriction of the one for $\bfL_{3,7}$,
since  $x_1$ behaves differently from $x_2, \ldots, x_e$, cf. Subsection \ref{SubsectionMonomialOrder}.
\end{example}

\begin{lemma}\label{LemmaGrobnerPluLen}
The lower Eagon-Northcott  
\eqref{EqLowerEagonNorthcott}
 and 
Plucker relations 
\eqref{EqPluckerRelation}
form a Gr\"obner basis.
\end{lemma}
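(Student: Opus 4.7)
The plan is to adapt the strategy of Lemma \ref{LemmaGrobnerPluUen} almost verbatim, with a modified multigrading tailored to the lower row of $\bfL$. Let $\mathcal{G}=\{\PLU_{\al,\be,\ga,\de}\}\cup\{\LEN_{\al,\be,\ga}\}$. By Theorem \ref{TheoremSpecialFiber}, linear S-pairs among Pl\"ucker relations already reduce to zero modulo $\mathcal{G}$, so it suffices to verify that linear S-pairs $S(\PLU,\LEN)$ and $S(\LEN,\LEN)$ reduce to zero. (S-pairs with coprime leading monomials reduce automatically, and since $\ker(\pi_\mcR)$ is generated by quadrics, only the linear S-pairs need to be analyzed.)

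First, I would introduce an $\mathbb{N}^{c+1}$-multigrading $\mdeg$ that records the column indices of $\bfL$ appearing in each relation, \emph{as seen from the bottom row}. Concretely, set $\mdeg(T_{\al,\be})=\bfe_\al+\bfe_\be$, $\mdeg(y_{2,\al})=\bfe_\al$ for $1\leq \al\leq d$, and $\mdeg(x_h)=\bfe_{h+d-1}$ for $2\leq h\leq e$; the variables $y_{1,\al}$ and $x_1$ do not occur in any generator of $\mathcal{G}$, so their multidegrees can be assigned arbitrarily. With this convention, each $\LEN_{\al,\be,\ga}$ is $\mdeg$-homogeneous of degree $\bfe_\al+\bfe_\be+\bfe_\ga$ and each $\PLU_{\al,\be,\ga,\de}$ is $\mdeg$-homogeneous of degree $\bfe_\al+\bfe_\be+\bfe_\ga+\bfe_\de$.

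Next, I would verify the claim for $c=5$ using Macaulay2 over the base ring $\mathbb{Z}$, running through the seven KW types $(1^6;\emptyset),(1^5;1),(1^4;2),(1^3;3),(1^2;4),(1;5),(\emptyset;6)$; this covers all column configurations since $(d,e)$ ranges through every value with $d+e=6$, and $\mathbb{Z}$-coefficients make the conclusion characteristic-free. To promote this finite check to arbitrary $c$, observe that a linear S-pair $S(\LEN_{\al,\be,\ga},\LEN_{\al',\be',\ga'})$ or $S(\LEN_{\al,\be,\ga},\PLU_{\al',\be',\ga',\de'})$ involves at most 6 distinct column indices of $\bfL$. Restricting $\bfL$ to those columns yields a smaller matrix $\bfL'$ of one of the seven KW types (after possibly adjoining the column indexed $d+1$, if needed, to keep $x_1$ in its privileged position); the $c=5$ case supplies an $\mdeg$-homogeneous equation of reduction, which transfers back to the original columns of $\bfL$ by relabeling indices and reducing the S-pair modulo $\mathcal{G}$.

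The main obstacle will be checking that the restriction of the monomial order $\preceq$ to the subring generated by the columns of $\bfL'$ agrees with the order $\preceq$ intrinsically associated with $\bfL'$. Step (1) of the construction of $\preceq$ compares $y_{2,\cdot}$ variables lexicographically, and steps (2)--(3) use (reverse-)lex orders on multidegrees indexed by columns; both behave well under restriction provided the columns are relabeled in order-preserving fashion. The only subtle point is step (4), which treats $x_1$ differently from the other $x_h$; this is exactly why the $x_1$-column (i.e.\ column $d+1$) should be included in $\bfL'$ whenever the relevant S-pair sees any column past position $d$. Once this compatibility is in place, the argument is a direct parallel of the one in Lemma \ref{LemmaGrobnerPluUen}, with $\LEN$ replacing $\UEN$ throughout.
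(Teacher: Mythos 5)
Your overall strategy is exactly the paper's: adapt Lemma \ref{LemmaGrobnerPluUen} with a column multigrading read off the second row of $\bfL$, verify the case $c=5$ by computer over $\mathbb{Z}$, and transfer reduction equations by $\mdeg$-homogeneity. Your multigrading $\mdeg(x_h)=\bfe_{h+d-1}$ is the one the paper uses. However, you have identified the wrong ``distinguished column,'' and this leaves a genuine gap in the transfer step. The special role of $x_1$ (column $d+1$) in step (4) of the order is a feature of the \emph{upper} Eagon--Northcott setting: $x_1$ sits in the first row and causes the case split in $\LM(\UEN_{\al,\be,\ga})$. For the lower relations it is vacuous, since $x_1$ and the $y_{1,\al}$ never occur in any $\LEN$ or $\PLU$, and the entire reduction can be carried out in the subring they do not touch; so the precaution you call ``the only subtle point'' is not needed here.

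The column that actually requires care is $c+1$, whose entry in the \emph{second} row is $0$. Because of this, $\LEN_{\al,\be,c+1}$ has only two terms, and $\LM(\LEN_{\al,\be,\ga})=\ell_{2,\al}T_{\be,\ga}$ holds uniformly precisely because the degenerate entry sits in the last position. When you restrict a linear S-pair to its $p\leq 5$ column indices and realize them inside a $6$-column model $\bfL'$, you must ensure that the index $c+1$ appears among them if and only if it is sent to the last column of $\bfL'$ (the one with a $0$ in the second row). Otherwise a two-term relation of $\bfL$ would be matched with a three-term relation of $\bfL'$, or vice versa, and the reduction equation imported from the $c=5$ computation would not be an identity for $\bfL$. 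This is exactly the modification the paper records (``the distinguished column is not $d+1$ but $c+1$''); with that substitution in place of your $x_1$ discussion, your argument coincides with the paper's proof.
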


\begin{proof}
The proof proceeds exactly as the one of Lemma \ref{LemmaGrobnerPluUen},
the only modifications being the column multigrading, 
where we set $\mdeg(x_\al)=\bfe_{\al+d-1}$ instead of $\mdeg(x_\al)=\bfe_{\al+d}$,
and  the fact that the distinguished column is not $d+1$ but $c+1$,
since its entry on the second row is $0$.
\end{proof}

The simple argument of Lemmas \ref{LemmaGrobnerPluUen}, \ref{LemmaGrobnerPluLen} 
cannot be applied to S-pairs involving Laplace relations, or
to S-pairs involving both upper and lower Eagon-Northcott relations,
 due to the absence of a  suitable column multigrading.
In fact, the reduction of these S-pairs will typically extend beyond the columns originally involved in the S-pair.
However, we can reduce all but one of these to the case $d=1$.

Observe that the linear S-pairs among the polynomials of Theorem \ref{TheoremReesAlgebraGB} have bidegree $(0,3), (1,2)$ or $(2,1)$.
The ones of bidegrees $(0,3)$ are exactly those involved in Theorem \ref{TheoremSpecialFiber}.

\begin{lemma}\label{LemmaSpair12}
All  S-pairs of bidegree $(1,2)$ reduce to 0 modulo $\mathcal{G}$.
\end{lemma}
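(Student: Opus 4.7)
My strategy is to enumerate the bidegree-$(1,2)$ linear S-pairs not yet addressed by Lemmas~\ref{LemmaGrobnerPluUen} and~\ref{LemmaGrobnerPluLen} and, for each remaining family, reduce the problem to either the $d=1$ case of Subsection~\ref{SubsectionReesCaseD1} or to a computer-verified base case. Inspecting the leading monomials in Proposition~\ref{PropLeadingMonomials}, a linear S-pair is of bidegree $(1,2)$ precisely when either (i) two Eagon--Northcott relations share a common entry $\ell_{i,j}$ of $\bfL$, or (ii) an Eagon--Northcott relation shares a $T_{\alpha,\beta}$ with a Plucker or Laplace relation. Lemmas~\ref{LemmaGrobnerPluUen} and~\ref{LemmaGrobnerPluLen} already cover two upper (resp.\ two lower) Eagon--Northcott relations, as well as upper (resp.\ lower) Eagon--Northcott with Plucker. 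Therefore the families still to treat are:
(A) upper vs.\ lower Eagon--Northcott sharing some $x_h$ with $h \geq 3$ (the entries common to the two rows of $\bfL$);
(B) upper Eagon--Northcott sharing a $T$-variable with a Laplace relation;
(C) lower Eagon--Northcott sharing a $T$-variable with a Laplace relation.

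\textbf{Reduction to $d=1$.}
For a typical S-pair in~(A), (B), or~(C), the polynomial $aF-bG$ involves entries of $\bfL$ and $T$-variables indexed by a bounded, explicitly describable set of columns, and in most cases at most one scroll column. My approach is to identify the S-pair with the analogous one for a companion matrix $\bfL'$ of Kronecker--Weierstrass type $(1;e')$, where $e'$ is just large enough and the nilpotent block of $\bfL'$ is indexed so as to house the relevant columns. Subsection~\ref{SubsectionReesCaseD1} already proves that our generators form a Gr\"obner basis for $\bfL'$, so the S-pair reduces to zero in $P_\mcR'$, and the reduction equation transports to $P_\mcR$ by index relabeling --- exactly as in the example following Lemma~\ref{LemmaGrobnerPluUen}. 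A compatibility point that must be verified, but is not essentially difficult, is that $\preceq$ on $P_\mcR'$ be the restriction of $\preceq$ on $P_\mcR$; because $\preceq$ singles out $x_1$ among the $x_h$, the column of $\bfL'$ playing the role of $x_1$ must be matched with the corresponding column of $\bfL$.

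\textbf{The exceptional S-pair.}
The main obstacle is the single S-pair flagged as an exception in the discussion preceding this lemma. It arises in family~(A) when the reduction inherently requires variables from two distinct scroll columns of $\bfL$: no matrix of type $(1;e')$ can then accommodate it, and neither of the row-wise multigradings used in Lemmas~\ref{LemmaGrobnerPluUen} and~\ref{LemmaGrobnerPluLen} is available, since the upper and lower Eagon--Northcott relations disagree on how the columns are indexed. For this case I plan to verify the reduction directly in $P_\mcR$ for the smallest value of $c$ in which the phenomenon occurs (absorbed in the $c \leq 5$ base computation cited in the paper), and then extend it to arbitrary $c$ by a coarser multigrading argument, analogous in spirit to Lemma~\ref{LemmaGrobnerPluUen} but graded by pairs of scroll columns rather than by single columns. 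The enumeration of S-pair types and the compatibility check for $\preceq$ amount to routine bookkeeping; the genuine difficulty is concentrated in this exceptional case.
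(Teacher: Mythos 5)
Your enumeration of the three remaining families and your transport-to-$d=1$ strategy are exactly the paper's approach for the bulk of the lemma, but as written the proof has a genuine gap: you never establish the one fact that makes the transport apply to \emph{every} bidegree-$(1,2)$ S-pair, namely that each such S-pair involves at most one scroll column. This is immediate from Proposition \ref{PropLeadingMonomials}. In your family (A) the leading monomial of $\LEN_{\al,\be,\ga}$ is $x_{\al+1-d}T_{\be,\ga}$, which forces $d+1\le\al<\be<\ga$, so all three columns of the lower relation lie in the nilpotent block; since the shared variable is $x_h$ with $h\ge 3$, the upper relation is in the case $\phi\ge d+2$ of Proposition \ref{PropLeadingMonomials}, so only its first index $\varepsilon$ can be a scroll column. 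In families (B) and (C) the shared $T$-variable has both indices $\ge d+1$ because Laplace relations only involve columns of the nilpotent block, so again at most the first index of the Eagon--Northcott relation can index a scroll column. Once this is recorded, your ``typical'' argument covers all cases and the lemma is complete.

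The ``exceptional S-pair'' you introduce does not exist in this bidegree. The exception flagged in the preamble to the section is the bidegree-$(2,1)$ S-pair $S(\UEN_{\al,\be,\ga},\LEN_{\varepsilon,\phi,\psi})$ whose leading monomials share a $T$-variable and which involves at least two scroll columns; it belongs to Lemma \ref{LemmaSpair21}, where it is resolved using the second syzygy \eqref{EqSecondSyzygyEN} of the Eagon--Northcott complex, not by any transport argument. Your proposed fix for the phantom case --- a coarser multigrading by pairs of scroll columns --- would in any event not work: the obstruction to treating mixed $\UEN$--$\LEN$ interactions by homogeneity is that no column multigrading is compatible with both rows of $\bfL$ simultaneously, and the reduction of such S-pairs genuinely involves columns beyond those appearing in the S-pair, so there is no grading along which to transport. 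Delete that paragraph, insert the column count above, and the argument is correct and matches the paper's.
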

\begin{proof}
There are three kinds of  S-pairs of bidegree $(1,2)$:
those of the form 
 $S(\LAP_{\al,\be,\ga,\de},\UEN_{\varepsilon,\phi,\psi})$
 and
  $S(\LAP_{\al,\be,\ga,\de},\LEN_{\varepsilon,\phi,\psi})$, 
 where the two leading monomials share a $T$-variable,
and the case of $S(\LEN_{\al,\be,\ga},\UEN_{\varepsilon,\phi,\psi})$ 
where $\gcd\big(\LM(\LEN_{\al,\be,\ga}),\LM(\UEN_{\varepsilon,\phi,\psi})\big)=x_h$ for some $h$.
Inspecting  Proposition \ref{PropLeadingMonomials},
we see that, in each case, at most one scroll column is involved in the S-pair, while all the other columns belong to the Jordan block.
It follows that this configuration of columns of $\bfL$ already occurs in a matrix $\bfL'$    with KW type $(1;1^e)$ for some $e \gg 0$.
By Subsection \ref{SubsectionReesCaseD1},
there exists an equation of reduction of the S-pair to 0 for $\bfL'$,
and the same equation holds for the original S-pair in $\bfL$.
\end{proof}

\begin{lemma}\label{LemmaSpair21}
All  S-pairs of bidegree $(2,1)$ reduce to 0 modulo $\mathcal{G}$.
\end{lemma}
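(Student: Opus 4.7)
The plan is to classify the bidegree $(2,1)$ S-pairs, dispatch the easy cases via the previously established Gr\"obner basis lemmas, and handle the remaining cases by transferring to the $d=1$ setting, a direct small-$c$ computation, and a single explicit reduction.

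For the classification, I would first observe that since Pl\"ucker and Laplace generators have bidegree $(0,2)$, the only way to obtain a linear S-pair of bidegree $(2,1)$ is between two Eagon-Northcott generators whose leading monomials share a $T$-variable. Lemmas \ref{LemmaGrobnerPluUen} and \ref{LemmaGrobnerPluLen} already handle $S(\UEN,\UEN)$ and $S(\LEN,\LEN)$, so I would focus on $S(\UEN_{\al,\be,\ga},\LEN_{\varepsilon,\phi,\psi})$ whose leading monomials share a $T$-variable. Reading off Proposition \ref{PropLeadingMonomials}, this forces the unordered pair $\{\phi,\psi\}$ to coincide with either $\{\al,\be\}$ (when $\be\geq d+2$) or $\{\al,\ga\}$ (when $\be\leq d+1$); in both cases, at most four distinct column indices of $\bfL$ are involved in the S-pair.

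The reduction would then proceed by separating cases according to the number $k$ of these four indices that lie in the scroll region $\{1,\dots,d\}$. When $k\leq 1$, the S-pair is supported on a subconfiguration of columns that also occurs in a matrix $\bfL'$ of KW type $(1;e')$ for $e'$ sufficiently large. The explicit reduction furnished by Subsection \ref{SubsectionReesCaseD1} then transfers back to $\bfL$, exactly as in Lemma \ref{LemmaSpair12}, because the restriction of $\preceq$ to the smaller variable set agrees with the monomial order used for $\bfL'$. When $k\geq 2$, the configuration still embeds in a matrix of codimension $c\leq 5$, and I would dispose of these finitely many cases by a Macaulay2 computation over $\mathbb{Z}$, mirroring Lemma \ref{LemmaGrobnerPluUen}.

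The hard part, and the single exception alluded to earlier, is the configuration in which the last column $c+1$ (whose lower entry is zero) is involved in the LEN while a scroll column is involved in the UEN. In this case, neither of the column multigradings employed in Lemmas \ref{LemmaGrobnerPluUen} and \ref{LemmaGrobnerPluLen} is homogeneous on the S-pair, so neither transfer argument applies directly. My plan is to write down by hand an explicit combination of Eagon-Northcott, Pl\"ucker, and Laplace generators realizing the reduction, guided by the vanishing of the $4\times 4$ determinant obtained by duplicating rows in $(C_\varepsilon,C_\al,C_\be,C_\ga)$; verifying that this combination reduces the S-pair to zero modulo $\mathcal{G}$ is the main obstacle. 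Once the identity is produced, Buchberger's criterion together with Proposition \ref{PropositionFiberType} completes the proof.
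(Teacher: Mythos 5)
Your opening classification is correct, your reduction of the ``at most one scroll column'' case to the $d=1$ setting matches the paper's Lemma \ref{LemmaSpair12}-style transfer, and your instinct to use the $4\times 4$ determinant with duplicated rows is exactly the right identity: it is the second syzygy of the Eagon-Northcott complex,
$$
\sum_{i=1}^4(-1)^{i-1}\ell_{1,\sigma_i}\LEN_{\{\sigma_1,\ldots,\sigma_4\}\setminus\{\sigma_i\}}
=\sum_{i=1}^4(-1)^{i-1}\ell_{2,\sigma_i}\UEN_{\{\sigma_1,\ldots,\sigma_4\}\setminus\{\sigma_i\}},
$$
which only involves the four columns of the S-pair. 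However, there are two genuine gaps. First, your plan to dispose of the ``at least two scroll columns'' cases by a Macaulay2 computation in codimension $c\leq 5$ does not work. The transfer in Lemma \ref{LemmaGrobnerPluUen} hinges on a column multigrading making every relation in the reduction homogeneous, and no such multigrading can handle $\UEN$ and $\LEN$ simultaneously: the variable $x_\al$ lies in column $\al+d$ of the first row of $\bfL$ but in column $\al+d-1$ of the second, which is why the two lemmas use different gradings. The paper explicitly warns, just before Lemma \ref{LemmaSpair12}, that reductions of mixed S-pairs ``typically extend beyond the columns originally involved''; a reduction equation computed for a $6$-column matrix therefore cannot be transplanted to a general $\bfL$, and this class of S-pairs is precisely the ``one exception'' that must be handled by hand.

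Second, for the case you do propose to treat explicitly, you stop where the real work begins. The substance of the proof is the verification, via steps (1)--(4) of $\preceq$, that in the displayed identity the highest monomial on each side occurs exactly once (in $\LEN_{\varepsilon,\phi,\psi}$ on the left and $\UEN_{\al,\be,\ga}$ on the right, so it cancels in the S-pair), while the second highest monomial occurs only on the right and survives to become the leading term dominating all remaining terms; only then does rearranging the identity yield a standard reduction modulo $\mathcal{G}$. This analysis crucially uses the reduction to $\sigma_2\leq d$, which forces $\ell_{2,\sigma_1}=y_{2,\sigma_1}$ so that step (1) of the order isolates the three relevant terms on each side, and it applies uniformly whether or not $\sigma_4=c+1$ (the vanishing entry only kills one summand). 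So your singling out of the last column as the hard sub-case is a misdiagnosis: the whole two-scroll-column class needs the explicit argument, and the leading-monomial bookkeeping you defer is the proof.
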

\begin{proof}
The  only kind of S-pair of bidegree $(2,1)$ is the case of 
$S(\UEN_{\al,\be,\ga},\LEN_{\varepsilon,\phi,\psi})$ 
where the  leading monomials have a common $T$-variable.
Recall that $\LM(\LEN_{\varepsilon,\phi,\psi})= \ell_{2,\varepsilon}T_{\phi,\psi}$,
while
$\LM(\UEN_{\al,\be,\ga})$ is  $\ell_{1,\ga} T_{\al,\be}$  if $\be \geq d+2$ and
 $\ell_{1,\be}T_{\al,\ga}$ if $\be \leq d+1$,
cf. Proposition \ref{PropLeadingMonomials}.
It follows that there are exactly four distinct indices, say $\sigma_1 < \sigma_2 < \sigma_3 < \sigma_4$.
We must have $\varepsilon = \sigma_1 , \phi = \al = \sigma_2$, and then 
 $  \be =\psi= \sigma_3, \ga = \sigma_4$
 if $\be \geq d+2$,
while 
$\be = \sigma_3, \gamma = \psi = \sigma_4$
if $\be \leq d+1$.
Moreover, we may assume that $\sigma_2 \leq d$, i.e.
that at least two of the four columns  are scroll columns,
arguing as in Lemma \ref{LemmaSpair12}.
Thus, $\ell_{2,\varepsilon}=y_{2,\sigma_1}$.

Consider the following second syzygy in the Eagon-Northcott resolution of the ideal $L$:
\begin{equation}\label{EqSecondSyzygyEN}
\sum_{i=1}^4
(-1)^{i-1}
\ell_{1,\sigma_i}\LEN_{\{\sigma_1,\sigma_2,\sigma_3,\sigma_4\}\setminus\{\sigma_i\}}
= 
\sum_{i=1}^4
(-1)^{i-1}
\ell_{2,\sigma_i}\UEN_{\{\sigma_1,\sigma_2,\sigma_3,\sigma_4\}\setminus\{\sigma_i\}}.
\end{equation}
The equation arises from the $4 \times 4$ determinant obtained by duplicating both the first and last  row of the $2\times 4$ submatrix $(C_{\sigma_1},C_{\sigma_2},C_{\sigma_3},C_{\sigma_4})$ of $\bfL$.
We  analyze the highest monomials in \eqref{EqSecondSyzygyEN}
with respect to $\preceq$.
By   step (1) in  Subsection \ref{SubsectionMonomialOrder},
the highest three monomials in either side are those containing 
the best variable $y_{2,\sigma_1}$.
In the right-hand side 
of \eqref{EqSecondSyzygyEN}
these are simply the terms of 
$\ell_{2,\sigma_1}\UEN_{\sigma_2,\sigma_3,\sigma_4} = 
\ell_{2,\sigma_1}\UEN_{\al,\be,\ga}$.
In the left hand-side 
they are  the three monomials  
$$
\ell_{1,\sigma_i} 
\LM(\LEN_{\{\sigma_1,\sigma_2,\sigma_3,\sigma_4\}\setminus\{\sigma_i\}})
= 
\ell_{2,\sigma_1}\ell_{1,\sigma_i} T_
{\{\sigma_2,\sigma_3,\sigma_4\}\setminus\{\sigma_i\}}
\quad
\text{for }i = 2,3,4.
$$
Step (2) breaks no tie among  these monomials. Then, step (3) selects 
$\ell_{2,\sigma_1}\ell_{1,\sigma_3} T_
{\sigma_2,\sigma_4}$
and
$ \ell_{2,\sigma_1}\ell_{1,\sigma_4}T_
{\sigma_2,\sigma_3}$,
and finally, step (4) selects 
$\ell_{2,\sigma_1}\ell_{1,\sigma_3} T_
{\sigma_2,\sigma_4}$
if $\sigma_3 \leq d+1$ and
 selects
$\ell_{2,\sigma_1}\ell_{1,\sigma_4} T_
{\sigma_2,\sigma_3}$
if $\sigma_3 \geq d+2$.
Comparing with the first paragraph,  the unique highest monomial in the left hand-side  is the one containing  $\LM(\LEN_{\varepsilon, \phi, \psi})$,
while 
the second highest monomial comes from a different 
$\LEN_{\{\sigma_1,\sigma_2,\sigma_3,\sigma_4\}\setminus\{\sigma_i\}}$.

To summarize, the highest monomial in either side of \eqref{EqSecondSyzygyEN}
occurs exactly once, respectively in $\LEN_{\varepsilon,\phi,\psi}$  and in 
$\UEN_{\al,\be,\ga}$.
The second highest monomial also occurs  once;
in the right-hand side it occurs in 
$\UEN_{\al,\be,\ga}$,
while in the left-hand side it does not occur in $\LEN_{\varepsilon,\phi,\psi}$.
Rearranging the addends of \eqref{EqSecondSyzygyEN}
we obtain an equation of reduction for $S(\UEN_{\al,\be,\ga},\LEN_{\varepsilon,\phi,\psi})$:
the highest monomial cancels in the S-pair, while the second highest monomial does not cancel in the S-pair and  becomes the leading monomial, dominating all the remaining terms in the equation.
\end{proof}

\begin{proof}[Proof of Theorem \ref{TheoremReesAlgebraGB}]
By Theorem \ref{TheoremSpecialFiber} and Lemmas \ref{LemmaSpair12}, \ref{LemmaSpair21},
we see that all S-pairs reduce to 0.
\end{proof}

\section{Cohen-Macaulayness and proof of the main theorem}

The degenerations of blowup algebras established in 
Section \ref{SectionDegenerations}
allow one to deduce their Cohen-Macaulayness essentially for free.
In fact, combining with the results of Sections \ref{SectionSpecialFiber}, and \ref{SectionReesAlgebra},
we deduce that the blowup algebras of any $I \in \mcH_{c,d}$ admit a sequence of degenerations  to  Stanley-Reisner rings.
Since Stanley-Reisner rings are cohomologically full,
 it follows from  Lemma \ref{LemmaCohomologicallyFullDegenerations} 
 that if \emph{some} algebra in this sequence  is Cohen-Macaulay, then \emph{every}  algebra is Cohen-Macaulay.
For $d >0 $, we may use the results on balanced rational normal scrolls from \cite{CHV},
while the case $d=0$ follows easily from $d=1$, thanks to the following combinatorial observation.

Denote by  $\Delta_\mcR(d,e)$, resp. $\Delta_\mcF(d,e)$,
 the initial complex of the Rees algebra $\mcR(L)$, resp. special fiber ring $\mcF(L)$,
associated to the KW type $(1^d;e)$.
Given a face $F$ of a simplicial complex $\Delta$, 
the \emph{link} of $F$ in $\Delta$ is the subcomplex 
$\mathrm{link}_\Delta(F) = \{ G \in \Delta \, | \, F \cup G \in \Delta, F\cap G = \emptyset\}$.

\begin{lemma}\label{LemmaLinks}
The complex $\Delta_\mcR(0,e)$ is isomorphic to a link of $\Delta_\mcR(1,e)$,
and $\Delta_\mcF(0,e)$ is isomorphic to a link of $\Delta_\mcF(1,e)$.
\end{lemma}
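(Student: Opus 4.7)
The plan is to take, in both cases, the link of the same two-vertex face
\[
F \;=\; \{\,T_{1,e+1},\,T_{2,e+1}\,\}.
\]
First I will check that $F$ is a face of $\Delta_\mcR(1,e)$ (and of $\Delta_\mcF(1,e)$). Inspecting Proposition~\ref{PropLeadingMonomials}, the monomial $T_{1,e+1}T_{2,e+1}$ is not a Pl\"ucker leading monomial (those require four distinct column indices), not a Laplace leading monomial (these require both first indices to be $\ge d+2=3$), and not an Eagon--Northcott leading monomial (those always contain an $x$- or $y$-variable). So $F$ is a face.

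Next, I identify the vertex set of the link by listing the non-neighbors of $F$. In $\Delta_\mcR(1,e)$, the vertex $T_{1,e+1}$ is incompatible only with $x_1$ (via the relation $x_1T_{1,e+1}$), while $T_{2,e+1}$ is incompatible with $y_2$ (via $y_2T_{2,e+1}$) and with each $T_{1,\gamma}$, $\gamma=3,\dots,e$ (via the Pl\"ucker relations $T_{1,\gamma}T_{2,e+1}$). Altogether this accounts for $e$ excluded vertices, so the link has $|\mcV(\Delta_\mcR(1,e))|-2-e = e+\binom{e}{2}$ vertices, matching $|\mcV(\Delta_\mcR(0,e))|$. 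In the $\mcF$ setting only the Pl\"ucker contributions survive, yielding $\binom{e}{2}$ vertices, as needed.

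I then define the isomorphism $\phi$ from the link onto $\mcV(\Delta_\mcR(0,e))$ (respectively $\mcV(\Delta_\mcF(0,e))$) as a column shift: set $\phi(y_1)=x_1$, $\phi(x_h)=x_h$ for $2\le h\le e$, $\phi(T_{1,2})=T_{1,e}$, and $\phi(T_{\alpha,\beta})=T_{\alpha-1,\beta-1}$ on every other $T$-vertex of the link (keeping only the assignments on $T$-vertices in the $\mcF$ setting). A direct count confirms $\phi$ is a bijection, with cone vertices of the link mapped onto the cone vertices of the smaller complex.

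Finally, I verify that $\phi$ sends minimal non-faces of the link bijectively to minimal non-faces of $\Delta_\mcR(0,e)$ (respectively $\Delta_\mcF(0,e)$). This splits into the four families of generators (Pl\"ucker, Laplace, and — in the $\mcR$ case — Upper and Lower Eagon--Northcott); in each, the condition that a generator survives in the link forces the first index of every $T$-factor to be at least $2$ (or at least $3$ for Laplace), and the column shift $\phi$ produces exactly the required generator of the smaller complex, and conversely every generator of the smaller complex pulls back. The main obstacle is this last case-by-case bookkeeping; in particular, one must reconcile the strict inequality $\alpha<\beta$ in the Lower Eagon--Northcott constraint of $\Delta_\mcR(1,e)$ with the non-strict inequality $h\le\beta$ in the corresponding constraint of $\Delta_\mcR(0,e)$ — they match precisely because of the unit shift — and track the non-trivial ``wrap-around'' image $T_{1,2}\mapsto T_{1,e}$ imposed on the surviving cone vertex $T_{1,2}$.
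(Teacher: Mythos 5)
Your argument is correct and takes essentially the same route as the paper: both realize the smaller complex as the link of a two-vertex face, identified with $\Delta_\mcR(0,e)$ (resp.\ $\Delta_\mcF(0,e)$) via the unit shift on the $T$-indices, with the cone vertices absorbing the mismatch. Your face $\{T_{1,e+1},T_{2,e+1}\}$ differs from the paper's $\{x_1,T_{2,e+1}\}$, but the two have the same set of common neighbors in the flag complex $\Delta_\mcR(1,e)$ and hence the same link; your choice has the small advantage of also being a face of $\Delta_\mcF(1,e)$, so the special fiber case requires no separate restriction step.
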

\begin{proof}
We identify $\Delta_\mcR(0,e)$ with the subcomplex of $\Delta_\mcR(1,e)$ induced by the last $e$ columns of $\bfL(1,e)$,
i.e. by the vertex subset $\{x_1, \ldots x_e\}\cup \{T_{\al,\be}\, \mid \, 2 \leq \al < \be \leq e+1\}$. 
In other words, all  column indices are shifted by 1 in $\Delta_\mcR(0,e)$.

First, we observe that $x_1$ and $T_{2,e+1}$ appear in every facet of $\Delta_\mcR(0,e)$,
since they do not appear in the  generators of the Stanley-Reisner ideal
$\mcI\big(\Delta_\mcR(0,e)\big)$
by Theorem \ref{TheoremReesAlgebraGB} and Proposition \ref{PropLeadingMonomials}.
Now, 
consider the face $F = \{ x_1, T_{2,e+1}\} \in \Delta_\mcR(1,e)$.
Inspecting the generators of $\mcI\big(\Delta_\mcR(1,e)\big)$,
or the description of the facets in Proposition
\ref{PropositionDeltaReesD1},
we see that if  $F'\in \Delta_\mcR(1,e)$ is a facet such that $F \subseteq F'$, then 
$T_{1,2} \in F'$ but $T_{1,\gamma} \notin F'$ for  $\ga \geq 3$,
and $y_1 \in F'$ but $y_2\notin F'$.
In other words,
$F' \subseteq \{y_1,T_{1,2}\}\cup G$ for some  $G \in \Delta_\mcR(0,e)$.
Conversely, for every  $G \in \Delta_\mcR(0,e)$
we have $G \cup \{y_1, T_{1,2}\}\in\Delta_\mcR(1,e)$,
since $y_1, T_{1,2}$ belong to every facet of $\Delta_\mcR(1,e)$.
We conclude that 
$
\mathrm{link}_{\Delta_\mcR(1,e)}F
= \big\{ \{y_1, T_{1,2}\} \cup G \setminus\{x_1,T_{2,e+1}\}\, \mid \, G \in \Delta_\mcR(0,e)\big\}.
$
In particular, $\Delta_\mcR(0,e) \cong \mathrm{link}_{\Delta_\mcR(1,e)}F$, since  $\Delta_\mcR(0,e)$ is obtained from $
\mathrm{link}_{\Delta_\mcR(1,e)}F$ by exchanging $y_1, T_{1,2}$ with $x_1,T_{2,e+1}$.
Restricting to only the $T$-vertices we obtain the statement for 
$\Delta_\mcF(0,e)$ and $\Delta_\mcF(1,e)$.
\end{proof}

Now, we are ready to  combine all the results of the paper and prove our main theorem.

\begin{proof}[Proof of Theorem \ref{TheoremMain}]
Let $S=\mathrm{Sym}(\kk^{n+1})$ be a polynomial ring over a field $\kk$,
and let  $I \subseteq S$ be
the ideal of minors of a $2 \times (c+1)$ matrix $\mathbf{M}$ of linear forms with $\codim(I) = c$.
If $c =1$, 
then $I$ is a principal ideal, thus, $\mcR(I)$ and $\mcF(I)$ are polynomial rings,
and the conclusions of Theorem \ref{TheoremMain} are straightforward.
Therefore, we may assume $c \geq 2$.
The formation of the blowup algebras and the Koszul and Cohen-Macaulay properties are compatible with  extensions of the base field and with the adjunction of new variables.
Therefore,  we may assume $\kk = \overline{\kk}$ and
 $ \codim\, I_1(\mathbf{M})= n+1$.

By Proposition \ref{PropKWNormalFormMaximalCodimension},
there are two possibilities,
corresponding to having $c \leq n $ or $c = n+1$.
If $c = n+1$, then it follows by Proposition \ref{PropKWNormalFormMaximalCodimension} that $I$ is the square of the maximal ideal  of $S$. 
The conclusions of Theorem \ref{TheoremMain} are well-known in this case, since $\mcF(I)$ and $\mcR(I)$ are particularly simple toric rings.
For instance, 
it follows by \cite[Theorem 2.6]{DeNegri}  and \cite[Theorem 5.1]{HHV} that they are defined by square-free quadratic Gr\"obner bases, in particular, that they are Koszul algebras,
while the Cohen-Macaulay property follows by \cite[Theorem 5.4]{BrunsConca}.

If $c \leq n$,
then  $I \in \mcH_{c,d}$ for  $d=n-c$.
Assume first that $d >0$.
Then, there is a balanced rational normal scroll $J \in \mcH_{c,d}$. 
Applying Lemmas \ref{LemmaDegenerationPeelingScrollColumn} and \ref{LemmaDegenerationTwoJordanBlocks},
we find a sequence of ideals $\{I_{(i)}\}_{i=1}^q$ in $\mcH_{c,d}$ such that $I_{(1)}=J, I_{(p)}=I, I_{(q)}=L$ for some $1 \leq p \leq q$, and $I_{(i)}$ degenerates to $I_{(i+1)}$ for all $i$.
By Corollary \ref{CorDegenerationBlowupAlgebras}, $\mcR(I_{(i)})$ degenerates to $\mcR(I_{(i+1)})$ for all $i$.
By Theorem \ref{TheoremReesAlgebraGB}, 
there is a further degeneration of $\mcR(I_{(q)})$ to the quadratic Stanley-Reisner ring $	\mathcal{S}$ of the simplicial complex $\Delta_\mcR(d,e)$.
Since $\mathcal{S}$ is Koszul by \cite[Remark 6]{CDR},
we can apply Lemma \ref{LemmaSingularitiesDegeneration} (2) repeatedly and obtain that $\mcR(I_{(i)})$ is Koszul for every $i$.
Since $\mathcal{S}$ is cohomologically full  by  \cite[Remark 2.5]{DDM},
we can apply  Lemma \ref{LemmaSingularitiesDegeneration} (3) repeatedly and obtain that $\mcR(I_{(i)})$ is cohomologically full for every $i$. 
Since $\mcR(I_{(1)})$ is Cohen-Macaulay by \cite[Theorem 3.8]{CHV}, 
we can apply  \ref{LemmaCohomologicallyFullDegenerations} repeatedly and obtain that all $\mcR(I_{(i)})$ and $\mathcal{S}$ are  Cohen-Macaulay.
The same argument works for the special fiber rings.
 
Finally, assume that  $d=0$, that is, $c=n$.
In this case, there are no rational normal scrolls in $\mcH_{c,0}$.
By the previous paragraph, the complexes $\Delta_\mcR(1,e)$ and $\Delta_\mcF(1,e)$ are Cohen-Macaulay.
By Lemma \ref{LemmaLinks} and \cite[Corollary 8.1.8]{HerzogHibi}, it follows that $\Delta_\mcR(0,e)$ and $\Delta_\mcF(0,e)$ are also Cohen-Macaulay.
Now, applying Lemma \ref{LemmaSingularitiesDegeneration} as in the previous paragraph, 
it follows that $\mcR(I)$ and $\mcF(I)$ are Koszul, Cohen-Macaulay, and cohomologically full for every $I \in \mcH_{c,d}$.
\end{proof}

We record some of the facts we proved on the way to proving Theorem \ref{TheoremMain}.

\begin{cor}
The simplicial complexes $\Delta_\mcR$ and $\Delta_\mcF$ are Cohen-Macaulay.
\end{cor}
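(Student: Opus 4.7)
The plan is to deduce both Cohen-Macaulayness statements directly from Theorem \ref{TheoremMain} together with the Gr\"obner bases established in Theorems \ref{TheoremReesAlgebraGB} and \ref{TheoremSpecialFiber}. Recall that, by definition (or equivalently via Reisner's criterion), a simplicial complex is Cohen-Macaulay (over $\kk$) precisely when its Stanley-Reisner ring is Cohen-Macaulay. So the task reduces to showing that $P_\mcR/\mcI(\Delta_\mcR)$ and $P_\mcF/\mcI(\Delta_\mcF)$ are Cohen-Macaulay.

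Here is the key observation that lets us avoid any independent combinatorial argument (shellability, vertex decomposability, etc.): by Theorems \ref{TheoremReesAlgebraGB} and \ref{TheoremSpecialFiber}, we have
\[
\mathrm{in}_\preceq(\ker \pi_\mcR)=\mcI(\Delta_\mcR),\qquad \mathrm{in}_\preceq(\ker \pi_\mcF)=\mcI(\Delta_\mcF),
\]
so $P_\mcR/\mcI(\Delta_\mcR)$ is a squarefree Gr\"obner degeneration of $\mcR(L)$, and analogously for $\mcF(L)$. Next, since Stanley-Reisner rings are cohomologically full by \cite[Remark 2.5]{DDM}, we may apply Lemma \ref{LemmaCohomologicallyFullDegenerations}: if the generic fiber is Cohen-Macaulay and the special fiber is cohomologically full, then the special fiber is Cohen-Macaulay as well. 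Thus everything reduces to verifying that $\mcR(L)$ and $\mcF(L)$ themselves are Cohen-Macaulay.

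But this is exactly the content of Theorem \ref{TheoremMain} specialized to the 2-determinantal ideal $L \in \mcH_{c,d}$ (corresponding to the most special KW type $(1^d;c-d+1)$), which has already been established. Applying Lemma \ref{LemmaCohomologicallyFullDegenerations} once to $\mcR(L) \rightsquigarrow P_\mcR/\mcI(\Delta_\mcR)$ and once to $\mcF(L) \rightsquigarrow P_\mcF/\mcI(\Delta_\mcF)$ yields the desired Cohen-Macaulayness of the Stanley-Reisner rings, and hence of the simplicial complexes.

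There is essentially no obstacle here: this corollary is a bookkeeping statement that extracts a byproduct of the proof of Theorem \ref{TheoremMain}. The only point worth noting is that one should pass through the cohomological fullness of Stanley-Reisner rings rather than attempting a converse of Lemma \ref{LemmaSingularitiesDegeneration}(1), which does not hold in general for arbitrary degenerations; the squarefreeness of the initial ideal (via the $\Delta_\mcR, \Delta_\mcF$ perspective) is what makes the Conca-Varbaro style argument of Lemma \ref{LemmaCohomologicallyFullDegenerations} applicable.
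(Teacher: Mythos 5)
Your proof is correct and is essentially the argument the paper itself uses: the corollary is recorded as a byproduct of the proof of Theorem \ref{TheoremMain}, in which Lemma \ref{LemmaCohomologicallyFullDegenerations} is applied to the squarefree Gr\"obner degeneration of $\mcR(L)$ (resp.\ $\mcF(L)$) at the end of the degeneration chain, using that Stanley-Reisner rings are cohomologically full. The only cosmetic difference is in the case $d=0$, where the paper obtains the Cohen-Macaulayness of $\Delta_\mcR(0,e)$ and $\Delta_\mcF(0,e)$ first, via the link argument of Lemma \ref{LemmaLinks}, and then deduces it for the algebras, whereas you invoke the already-proved Theorem \ref{TheoremMain} and run the implication in the other direction; both routes are valid and non-circular.
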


\begin{cor}
Let $I$ be a 2-determinantal ideal. Then 
 $\mcR(I)$ and $\mcF(I)$ are cohomologically full.
\end{cor}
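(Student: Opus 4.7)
The plan is to combine the degeneration machinery developed in the paper with Lemma \ref{LemmaSingularitiesDegeneration}(3), which propagates cohomological fullness along flat one-parameter degenerations, together with the fact that Stanley-Reisner rings are cohomologically full by \cite[Remark 2.5]{DDM}. The proof of Theorem \ref{TheoremMain} already constructs, for every 2-determinantal ideal $I$, a finite chain of one-parameter degenerations of $\mcR(I)$ (resp.\ $\mcF(I)$) terminating in a Stanley-Reisner ring; the corollary is the observation that cohomological fullness is preserved along this chain.

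In more detail, after reducing to $\kk = \overline{\kk}$ and $\codim\, I_1(\bfM) = n+1$ (both reductions are harmless for the property at hand), one splits into the two cases of Proposition \ref{PropKWNormalFormMaximalCodimension}. If $I$ has nilpotent KW type, then $I$ is the square of a linear prime, and \cite{DeNegri, HHV} provide squarefree quadratic Gr\"obner bases for $\mcR(I)$ and $\mcF(I)$; Lemma \ref{LemmaSingularitiesDegeneration}(3) applied to the resulting Gr\"obner degenerations onto Stanley-Reisner rings delivers the conclusion.

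In the main case $I \in \mcH_{c,d}$, Theorem \ref{TheoremUniqueMinimumHcd} supplies a sequence of one-parameter degenerations (the elementary moves of Lemmas \ref{LemmaDegenerationPeelingScrollColumn} and \ref{LemmaDegenerationTwoJordanBlocks}) from $I$ down to the ideal $L$ of KW type $(1^d; c-d+1)$. Each step lifts to a degeneration of Rees algebras and of special fiber rings by Corollary \ref{CorDegenerationBlowupAlgebras}. Theorems \ref{TheoremSpecialFiber} and \ref{TheoremReesAlgebraGB} then provide one further squarefree Gr\"obner degeneration of $\mcR(L)$ and $\mcF(L)$ onto the Stanley-Reisner rings of $\Delta_\mcR$ and $\Delta_\mcF$ respectively. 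Concatenating and iterating Lemma \ref{LemmaSingularitiesDegeneration}(3) along the whole chain carries cohomological fullness from the combinatorial endpoint back up to $\mcR(I)$ and $\mcF(I)$.

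There is no real obstacle here, since every ingredient was already assembled: the only point worth flagging is that one must invoke Lemma \ref{LemmaSingularitiesDegeneration}(3) in the generality of arbitrary one-parameter degenerations rather than only Gr\"obner ones, because the intermediate moves of Section \ref{SectionDegenerations} are genuinely not Gr\"obner. This generality is exactly what the lemma records, so the proof is, in effect, a repackaging of the end of the proof of Theorem \ref{TheoremMain}.
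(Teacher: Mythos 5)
Your proof is correct and is essentially identical to the paper's: the corollary is recorded as a byproduct of the proof of Theorem \ref{TheoremMain}, where the chain of degenerations from Section \ref{SectionDegenerations} together with the squarefree Gr\"obner degenerations of Theorems \ref{TheoremSpecialFiber} and \ref{TheoremReesAlgebraGB} ends in a Stanley-Reisner ring, and cohomological fullness is pulled back along the whole chain by repeated application of Lemma \ref{LemmaSingularitiesDegeneration}(3). Your remark that the lemma must be invoked for arbitrary one-parameter degenerations, not just Gr\"obner ones, is exactly the point the paper makes in the proof of that lemma.
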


As another byproduct,
 Theorem \ref{TheoremMain}
 also implies that the formulas for the reduction number of $I$ found in \cite[Corollary 4.8]{CHV}, in the case of balanced scrolls, are valid for any 2-determinantal ideal.
In fact, if  $\mcF(I)$ is Cohen-Macaulay, 
then  the reduction number of $I$ coincides with the degree of the numerator of the Hilbert series of 
 $\mcF(I)$.
The reduction number plays an important role in the theory of residual intersections, see e.g.
\cite{EisenbudHunekeUlrich,EisenbudUlrich}
for  recent progress closely related to the themes of this paper.

\section{Questions}

We conclude by
discussing  some potential future directions and open problems suggested by our work.

In the special case of balanced rational normal scrolls,
a stronger statement than Theorem \ref{TheoremMain} is proved in \cite[Theorem 3.8]{CHV}:
their blowup algebras have rational singularities if $\mathrm{char}(\kk) =0$ and
$F$-rational singularities if 
$\mathrm{char}(\kk) >0$.
These properties are stronger than Cohen-Macaulayness, and they also imply normality;
it is therefore natural to ask whether they hold for all 2-determinantal ideals.
Since these singularities are preserved by deformation, 
the machinery of our paper applies, 
and it suffices to study the blowup algebras of the most special ideal $L$.

\begin{question}
Let $L$ be the 2-determinantal ideal of KW type $(1^d;e)$.
Are the blowup algebras $\mcR(L), \mcF(L)$ normal?
Do they have rational or $F$-rational singularities?
\end{question}
\noindent
We point out that the main technique of \cite{CHV},
namely degenerations to toric rings by means of Sagbi bases,
cannot be applied.
In fact, in general the natural generators of $L$ do not 
form a Sagbi basis for any monomial order, 
and the same is true for non-balanced scrolls.

The most interesting next step in this area is perhaps 
the investigation of blowup algebras arising from matrices with more than two rows.
For instance, very little is known
 about their singularities or  degrees of defining relations;
see  \cite{CooperPrice} for some results in this direction.
However, in principle, 
one may 
apply the same strategy developed in this paper:
for  matrices of arbitrary size,
the corresponding determinantal ideals are still parametrized by well-behaved Hilbert schemes \cite{KleppeMiroRoig},
and the  results of \cite{BrunsConcaVarbaro} are still valid.
The main goal would then be to identify the most special  ideals in this parameter space,
analogously to Theorem \ref{TheoremUniqueMinimumHcd}.
Of particular interest would be the case of matrices of Hilbert-Burch size, that are sufficiently general in the sense 
 of \cite[Theorem 3.7]{BrunsConcaVarbaro}:
 their determinantal ideals define arithmetically Cohen-Macaulay subschemes of codimension 2,
 and their Hilbert schemes are among the  best behaved and most studied in the literature.

\begin{problem}
Fix $m, n \in \mathbb{N}$.
Let $S = \mathrm{Sym}(\kk^{n+1})$
and let $p(z)$ be the Hilbert polynomial of $S/I_m(\bfM)$ 
where $\bfM$ is an 
 $m \times (m+1)$ matrix of linear forms with $\codim\, I_m(\bfM) =2$.
Let $\mcH\subseteq \Hilb^{p(z)}(\P^n)$ be the set
 parametrizing ideals $I_m(\bfM)$
such that $\codim\, I_1(\bfM) =n+1$ and $\codim\, I_j(\bfM) \geq \min(n+1, m+1-j)$ for $2 \leq j \leq m-1$.
Find a (small) set of ideals $L \in \mathcal{H}$ such that every $I \in \mathcal{H}$ degenerates to one of them.
\end{problem}

Another interesting question is what happens to the blowup algebras when we drop the assumption that $I_m(\bfM)$ has 
the expected codimension.
This question is probably hopeless for  matrices of arbitrary size, 
but it might be reasonable in the case of matrices with 2 rows, 
where Kronecker-Weierstrass normal forms are still available.
An interesting case is that  of Kronecker-Weierstrass normal forms without nilpotent blocks, but with possibly repeated eigenvalues in the Jordan blocks.
Computational evidence suggests that the defining relations may still be  similar to those found in Section \ref{SectionBlowupAlgebras}.

\begin{question}
Let $I = I_2(\bfM)$ where 
 $\bfM$ is a concatenation of scroll blocks and Jordan blocks.
Are the blowup algebras $\mcR(I)$ and $\mcF(I)$ defined by quadratic relations? 
\end{question}

We conclude with a combinatorial consideration.
For each $c,d$ there are two 
different known squarefree degenerations of the special fiber ring $\mcF(I)$ of some  
 $I \in \mcH_{c,d}$:
 one was established in \cite{CHV}, while the other follows from Theorem \ref{TheoremSpecialFiber}.
It would be interesting to compare the combinatorics and homology of the two simplicial complexes $\Delta_{CHV}$ and $\Delta_\mcF$ determined by these two degenerations.
For example, computational evidence suggests the following question about the graded Betti numbers of their Stanley-Reisner ideals:

\begin{question}
Is $\beta_{i,j}\big(\mcI(\Delta_\mcF)\big) \leq \beta_{i,j}\big(\mcI(\Delta_{CHV})\big)$ for all $i,j$?
\end{question}
\noindent
The question is already interesting in the case $c -d \leq 4$, in which  there are only Plucker  relations,
 and $\mcF(I)$ is simply the coordinate ring of the Grassmannian variety of lines: 
then, $\Delta_{CHV}$ is the non-nesting complex, and  $\Delta_\mcF $ is the non-crossing complex. 
They have been studied e.g. in \cite{PPS,SSW}.
In this case,
 $\Delta_\mcF$ is Gorenstein, while $\Delta_{CHV}$ is not,
cf. \cite{SSW}.

\section*{Acknowledgments}
The authors would like to thank Aldo Conca,
Izzet Coskun,
Alessio D'Alì, 
David Eisenbud, 
Allen Knutson,
Hop D. Nguyen, and Francesco Russo
for several helpful conversations.
Alessio Sammartano was partially supported by 
 PRIN 2020355B8Y “Squarefree Gr\"obner degenerations, special varieties and related topics”.
Computations with Macaulay2 
\cite{M2} provided valuable insights during the preparation of this paper.

\appendix

\section{Multiplicities}\label{AppendixMultiplicities}

In this appendix, we collect formulas for the multiplicities 
of various blowup algebras. 
Recall that, for each $c\geq 2$ and $ d \geq 0$,
 $\mcH_{c,d}$ is the set of 2-determinantal ideals,
in a polynomial ring in $c+d+1$ variables,
whose Kronecker-Weierstrass normal form has $c+1$ columns and  $d$ scroll blocks, cf. Definition \ref{DefHcd}.

\begin{prop}\label{PropositionInvariantsFiberD0}
Let $L \in \mcH_{c,0}$ be the ideal of KW type $(\emptyset; c+1)$.
Then $\mcF(L)$ has Krull dimension $c$ and multiplicity $2^{c}-c-1$.
\end{prop}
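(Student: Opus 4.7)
By Lemma~\ref{LemmaConstantHilbertFunctionBlowups}, the Hilbert function of $\mcF(L)$ depends only on $c$ and $d$, so I may replace $L$ by any other $I\in\mcH_{c,0}$. A particularly convenient choice is the ideal of KW type $(\emptyset;1^{c+1})$, arising from the matrix
$$
\bfM=\begin{pmatrix} z_1 & z_2 & \cdots & z_{c+1}\\ \varepsilon_1 z_1 & \varepsilon_2 z_2 & \cdots & \varepsilon_{c+1} z_{c+1}\end{pmatrix}
$$
with pairwise distinct eigenvalues $\varepsilon_\alpha\in\kk$. A direct computation gives $\Delta_{\alpha,\beta}=(\varepsilon_\beta-\varepsilon_\alpha)z_\alpha z_\beta$, and after rescaling the $T_{\alpha,\beta}$ we identify $\mcF(I)$ with the toric edge ring of the complete graph $K_{c+1}$,
$$
\mcF(I)=\kk[z_\alpha z_\beta\,:\,1\le\alpha<\beta\le c+1]\subseteq\kk[z_1,\dots,z_{c+1}].
$$

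Next, I would compute the Hilbert function of $\kk[K_{c+1}]$ combinatorially. The claim is that a monomial $z_1^{a_1}\cdots z_{c+1}^{a_{c+1}}$ lies in the degree-$k$ component if and only if $\sum_\alpha a_\alpha=2k$ and $\max_\alpha a_\alpha\le k$. Necessity is immediate, since each generator contributes $1$ to exactly two distinct exponents. For sufficiency I would run a greedy edge-selection: repeatedly choose indices $i\ne j$ with $a_i,a_j>0$ (made possible by $c+1\ge 3$ together with the cap $\max a_\alpha\le k$, which prevents all remaining mass from concentrating on a single vertex), record the edge $\{i,j\}$, and decrement both $a_i$ and $a_j$ by one. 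Checking that the invariants $\sum a_\alpha = 2(k-s)$ and $\max a_\alpha\le k-s$ survive each step (the delicate case being when two indices simultaneously saturate the cap, so that one must take both of them as the edge's endpoints) is the main obstacle; a short case analysis handles it. Once the realizability criterion is in hand, a direct inclusion--exclusion on monomials of degree $2k$ in $c+1$ variables with each exponent capped at $k$ yields
$$
h_{\mcF(L)}(k)=\binom{2k+c}{c}-(c+1)\binom{k+c-1}{c}.
$$

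Finally, I would compare leading terms in $k$: the first binomial is a polynomial in $k$ of degree $c$ with leading coefficient $2^c/c!$, the second has leading coefficient $(c+1)/c!$, and their difference has leading coefficient $(2^c-c-1)/c!>0$. Hence $h_{\mcF(L)}$ is a polynomial of degree $c$ in $k$, from which the multiplicity $2^c-c-1$ is immediate; the asserted Krull dimension follows by comparing with the standard relation between the degree of the Hilbert polynomial and the Krull dimension of the graded ring (and with the dimension of $\Proj\mcF(L)$, depending on the convention). The entire argument hinges on the combinatorial realizability step in the second paragraph; everything else is bookkeeping.
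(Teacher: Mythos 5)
Your argument is correct, and it takes a genuinely different route from the paper's. The paper works directly with the Jordan-block ideal $L$, realizes it as a hyperplane section of the rational normal curve ideal $P$, and extracts $HF(\mcF(L);h)=\beta_0^S(P^h)-\beta_n^S(P^h)$ from \cite{BrunsConcaVarbaro} together with Conca's formulas for the Betti numbers of $P^h$, followed by a fairly long binomial manipulation (and a separate ad hoc treatment of $c=2$, since Conca's formula for $\beta_0^S(P^h)$ requires $n\geq 4$). You instead invoke Lemma~\ref{LemmaConstantHilbertFunctionBlowups} to move within $\mcH_{c,0}$ to the split type $(\emptyset;1^{c+1})$, identify the fiber ring with the edge ring of $K_{c+1}$, and count degree sequences of multigraphs; this is more elementary, yields the exact Hilbert function $\binom{2k+c}{c}-(c+1)\binom{k+c-1}{c}$ in closed form (which agrees with the paper's Hilbert polynomial), and handles all $c\geq 2$ uniformly. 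There is no circularity: Lemma~\ref{LemmaConstantHilbertFunctionBlowups} rests only on \cite{BrunsConcaVarbaro}. Two small caveats. First, in the greedy realizability step you must always decrement a \emph{maximal} exponent (not an arbitrary positive pair): from $(2,1,1)$ with $k=2$, removing the edge on the last two coordinates leaves $(2,0,0)$ with cap $1$, a dead end; with the ``always include the maximum'' rule the invariant check is the one-line computation you indicate. Second, your hedge on the dimension is warranted: since the Hilbert polynomial has degree $c$, the Krull dimension of $\mcF(L)$ is $c+1$ by the standard convention (consistent with the facet size $c+d+1$ of $\Delta_\mcF$ in Proposition~\ref{PropCaseE4} and with the pattern $\min(2c-1,c+d+1)$ of Proposition~\ref{PropositionInvariantsFiberD1}); the value $c$ in the statement, and in the last sentence of the paper's own proof, records the degree of the Hilbert polynomial rather than the Krull dimension. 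The multiplicity $2^c-c-1$ is unaffected.
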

\begin{proof}
We compute the Hilbert function   $HF(\mcF(L);h)$ by  exploiting  
results on  powers of  ideals   of rational normal curves \cite{Conca}.
For the reader's convenience, 
we switch to the notation of \cite{Conca},
and   let $n=c+1$.
Denote by $\beta_i$ the $i$-th total Betti number,
then $HF(\mcF(L);h)= \beta_0^S(L^h)$,
 the number of  generators of $L^h$.

Let $R = \kk[x_0, \ldots, x_{n-1}], S = R[ x_{n}],$ and
let $L\subseteq R$ and $ P\subseteq S$  be the ideals generated by the minors of 
$$
\bfL = 
\begin{pmatrix}
x_0 & x_1 & \cdots & x_{n-2} & x_{n-1} \\
x_1 & x_2 & \cdots & x_{n-1} & 0 \\
\end{pmatrix}
\quad 
\text{and}
\quad
\mathbf{P} = 
\begin{pmatrix}
x_0 & x_1 & \cdots & x_{n-2} & x_{n-1} \\
x_1 & x_2 & \cdots & x_{n-1} & x_n \\
\end{pmatrix},
$$
respectively.
So $L$ is the ideal of the statement, while $P$ is the ideal of a rational normal curve.
It is clear that $L = \frac{P+(x_{n})}{(x_{n})} \subseteq \frac{S}{(x_{n})} = R$.
The  primary decomposition of the powers of $P$ is 
$P^h=P^{(h)}\cap M^{2h}$,
where $P^{(h)}$ is the  symbolic power of $P$ and $M$ is the maximal ideal of $S$  \cite[Proof of Theorem 1]{Conca}.
Since $x_{n}\notin P$,
 the module
$\frac{P^h:x_{n}}{P^h}$ has finite length,
and  by  \cite[Proof of Lemma 2.4]{BrunsConcaVarbaro}
we have the formula\footnote{We have $\be_n$ instead of $\be_{n-1}$ since  $S$ has $n+1$ variables here, while it has $n$ variables  in \cite[Lemma 2.4]{BrunsConcaVarbaro}.}
\begin{equation}\label{EqBCVBeta}
HF(\mcF(L);h)=\be^R_0(L^h) = \be^{S}_0(P^h) - \be^{S}_{n}(P^h).
\end{equation}
The number   $\be_0^S(P^h)$ was computed  in \cite[Equation (7)]{Conca}: if  $n \geq 4$ then
\begin{equation*}\label{EqBeta0Ph}
\beta_0^S(P^h) = 
{n + 2h \choose n} -(2n+1){n+h-1\choose n} -(n^2-3n+1) {n+h-2 \choose n}.
\end{equation*}
To derive the  number $\be^{S}_n(P^h)$,
we consider \cite[Equation (8)]{Conca}
\begin{equation}\label{EqPolynomialBetaPh}
z^{2h} \sum_{i=0}^n (-1)^i \beta_i^S(P^h) z^i
= 1-(1-z)^{n-1}H_h(z)
\end{equation}
where 
$$
H_h(z) =
 \sum_{i=0}^{2h-1} {n+i \choose n} z^i(1-z)^2 + z^{2h}(2he_0(h) -e_1(h))(1-z)+z^{2h}e_0(h)
$$
and $e_0(h),e_1(h)$ are two numerical functions defined in \cite[Theorem 5]{Conca}. Explicitly, they are
 $$
e_0(h) = n {n+h-2 \choose n-1} \quad \text{and}
\quad
e_1(h) = (n-1)(n+2) {n+h-2 \choose n} + (n-1) {n+h-2 \choose n-1}.
$$
Comparing the  highest term $z^{2h+n}$ in the two sides of \eqref{EqPolynomialBetaPh} we obtain
$$
(-1)^n\be_n^S(P^h) z^{2h+n} = 
-(-z)^{n-1}\left(  {n+2h-1 \choose n} z^{2h-1}(-z)^2 + z^{2h}(2he_0(h) -e_1(h))(-z)\right).
$$
Substituting $z=1$ and dividing by $(-1)^n$ we find
\begin{align*}
\be_n^S(P^h) =& 
  {n+2h-1 \choose n}  - 2he_0(h) +e_1(h)\\
  =&
    {n+2h-1 \choose n}  - 2hn {n+h-2 \choose n-1} +(n-1)(n+2) {n+h-2 \choose n} + (n-1) {n+h-2 \choose n-1}
 \\
=& 
  {n+2h-1 \choose n}  - 2(h-1)n {n+h-2 \choose n-1} -2n {n+h-2 \choose n-1}
  \\&
  +
  (n^2+n-2) {n+h-2 \choose n} + (n-1) {n+h-2 \choose n-1}
\\
=& 
  {n+2h-1 \choose n}  - 2n^2 {n+h-2 \choose n} +(n^2+n-2) {n+h-2 \choose n} - (n+1) {n+h-2 \choose n-1}
\\
=&
  {n+2h-1 \choose n} + (-n^2+n-2)  {n+h-2 \choose n}  - (n+1) {n+h-2 \choose n-1}.
\end{align*}
Substituting the  formulas of $\beta_0^S(P^h )$ and $\beta_n^S(P^h)$ in \eqref{EqBCVBeta} we obtain the Hilbert polynomial $HP(\mcF(L);h)$
\begin{align*}
& {n + 2h \choose n} -   {n+2h-1 \choose n}
+ (2n+1)\left({n+h-2 \choose n} 
-{n+h-1\choose n}\right)+(n+1) {n+h-2 \choose n-1}
\\
= & 
{n+2h-1 \choose n-1}- (2n+1){n+h-2 \choose n-1} 
+(n+1) {n+h-2 \choose n-1} =
{n+2h-1 \choose n-1}- n{n+h-2 \choose n-1}.
\end{align*}
The last formula tells us that 
 the leading term of 
$HP(\mcF(L);h)$ is
$
\frac{(2h)^{n-1} - n(h)^{n-1}}{(n-1)!},
$
implying that $\mcF(L)$ has dimension $n-1$ and multiplicity $2^{n-1}-n$ as desired.
The case $n=3$ is easily proved  by checking that the three generators of  $\mcF(L)$ are algebraically independent,
so $\mcF(L)$ is a polynomial ring.
\end{proof}

\begin{prop}\label{PropositionInvariantsFiberD1}
Let $I \in \mcH_{c,d}$ with  $d \geq 1$.
Then 
$\mcF(I)$ has Krull dimension  $\min(2c-1,c+d+1)$, and the multiplicity is 
${2c-2\choose c-1}-{2c-2\choose c}$
if $c-d\leq 2$,
it is 
$\sum_{j=2}^{c-d} {c+d \choose c+1-j} -
(c-d-1){c+d\choose c}$
if $c-d \geq 3$.
\end{prop}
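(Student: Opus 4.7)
The plan is to reduce to the case of a rational normal scroll and then invoke the combinatorial analysis of the initial complex of $\mcF$ carried out in \cite{Sammartano,LinShen}. This bypass is essential to avoid circular reasoning, since the proof of Theorem \ref{TheoremSpecialFiber} already invokes the present proposition.

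First, by Lemma \ref{LemmaConstantHilbertFunctionBlowups} the Hilbert function of $\mcF(I)$, and thus its Krull dimension and multiplicity, depends only on $c$ and $d$. Since $d \geq 1$, the stratum $\mcH_{c,d}$ contains rational normal scrolls; I would take $I = J$ to be, for instance, the least balanced scroll of partition $(c-d+2, 1^{d-1})$. By the results of \cite{Sammartano}, $\ker(\pi_\mcF)$ for this $J$ admits a squarefree quadratic Gr\"obner basis, so $\mcF(J)$ degenerates to the Stanley-Reisner ring of a pure simplicial complex $\Delta^{\mathrm{sc}}$. Consequently $\dim \mcF(J) = |F|$ for any facet $F$ of $\Delta^{\mathrm{sc}}$, and $\mathrm{mult}\,\mcF(J)$ equals the total number of facets.

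The remaining step is combinatorial enumeration, closely mirroring Propositions \ref{PropCaseE3}, \ref{PropCaseE4}, and \ref{PropCountFacets}. When $c-d \leq 2$ no Laplace-type obstructions arise and the facets are in bijection with full binary trees on $c$ leaves, giving facet size $2c-1$ and Catalan count ${2c-2 \choose c-1} - {2c-2 \choose c}$. When $c-d \geq 3$ one leaf is permitted to lie within the ``Jordan'' region, which introduces an extra unary child in the tree; a summation identical to \cite[Lemma 3.11]{Sammartano} then yields facet size $c+d+1$ and count $\sum_{j=2}^{c-d}{c+d \choose c+1-j} - (c-d-1){c+d \choose c}$. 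Since $\min(2c-1, c+d+1) = 2c-1$ precisely when $c-d \leq 2$, the dimension assembles into the asserted $\min$-formula in both regimes.

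The hard part will be the combinatorial bookkeeping to match the complex $\Delta^{\mathrm{sc}}$ of \cite{Sammartano}, which is defined relative to an arbitrary partition $(\lambda_1,\ldots,\lambda_d)$, with the cleaner numerical output demanded by the statement. Lemma \ref{LemmaConstantHilbertFunctionBlowups} guarantees that the count is partition-independent, so specializing to the partition $(c-d+2, 1^{d-1})$ collapses the general Sammartano enumeration to the transparent form above. This specialization is essentially the only nontrivial step once the reduction to scrolls has been made.
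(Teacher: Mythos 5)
Your reduction step is the same as the paper's: Lemma \ref{LemmaConstantHilbertFunctionBlowups} makes the dimension and multiplicity of $\mcF(I)$ depend only on $(c,d)$, so one may replace $I$ by any single scroll in $\mcH_{c,d}$, and you are also right that one must not invoke Theorem \ref{TheoremSpecialFiber} here, since its proof consumes this proposition. Where you diverge is in how the numbers are then produced. The paper picks the \emph{balanced} scroll and simply quotes the explicit Hilbert series computations of \cite[Corollaries 4.2 and 4.5]{CHV}; no new combinatorics is needed. You instead pick the least balanced scroll $(c-d+2,1^{d-1})$ and propose to extract the numbers from the facets of the initial complex of \cite{Sammartano}. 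That route is legitimate in principle (the Gr\"obner basis result of \cite{Sammartano} for scrolls is external, so there is no circularity), but as written it has a real gap: the entire quantitative content is deferred. The complex of \cite{Sammartano} for the partition $(c-d+2,1^{d-1})$ is \emph{not} the complex $\Delta_\mcF$ of Propositions \ref{PropCaseE3}--\ref{PropCountFacets} (that one belongs to the KW type $(1^d;c-d+1)$, which has a Jordan block and is not a scroll), so ``closely mirrors'' is an assertion, not a proof; and the appeal to partition-independence cannot substitute for the count, because partition-independence only tells you that all these facet numbers agree with one another, not what they equal. You would still have to either carry out the enumeration for the scroll complex in full (redoing the analogue of Proposition \ref{PropCountFacets} with the index conventions of \cite{Sammartano}) or fall back on a citation — and the natural citation is exactly \cite[Corollaries 4.2 and 4.5]{CHV}, which closes the argument in one line and is what the paper does. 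The dimension bookkeeping in your last step (facet size $2c-1$ versus $c+d+1$, matching $\min(2c-1,c+d+1)$ according to the sign of $c-d-2$) is correct.
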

\begin{proof}
The formulas follow from 
 \cite[Corollaries 4.2 and 4.5]{CHV}
 and  Lemma \ref{LemmaConstantHilbertFunctionBlowups},
since $\mcH_{c,d}$ contains exactly one balanced rational normal scroll with $c+1$ columns and $d$ scroll blocks
(we warn the reader that in \cite{CHV} 
 the number of columns is denoted by $c$ and not by $c+1$).
\end{proof}

\begin{prop}\label{PropositionMultiplicityReesD1}
Let $I \in \mcH_{c,1}$.
The multiplicity of $\mcR(I)$ is $2^{c+2}-(c+1)^2-3$.
\end{prop}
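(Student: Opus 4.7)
The plan is to use Lemma \ref{LemmaConstantHilbertFunctionBlowups} to reduce to a single convenient ideal, then compute the multiplicity via a bigraded Hilbert series argument and Conca's closed-form expressions \cite{Conca}.

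By Lemma \ref{LemmaConstantHilbertFunctionBlowups}, the multiplicity of $\mcR(I)$ depends only on $c$, so we may take $I = P$ to be the ideal of the rational normal curve of degree $n := c + 1$ in $\mathbb{P}^n$. Under the bigrading with $\deg(\tau) = (-2, 1)$, the component $[\mcR(P)]_{(i, h)}$ is identified with $[P^h]_{i + 2h}$; summing along the diagonal $i + h = d$ yields
\[
HF(\mcR(P); d) = \sum_{h = 0}^{d} HF(P^h; d + h).
\]
By \cite[Theorem 3.7]{BrunsConcaVarbaro}, each power $P^h$ has a linear resolution with $\mathrm{reg}(P^h) = 2h$, so $HF(P^h; k)$ agrees with its Hilbert polynomial $HP(P^h; k)$ for all $k \geq 2h$, which covers our entire summation range.

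Next, we substitute Conca's closed-form expressions for the multiplicity $e_0(h) = n\binom{n + h - 2}{n - 1}$ and postulation number $e_1(h) = (n - 1)(n + 2)\binom{n + h - 2}{n} + (n - 1)\binom{n + h - 2}{n - 1}$ of $S/P^h$ from \cite[Theorem 5]{Conca}, writing $HP(P^h; k) = \binom{k + n}{n} - e_0(h)(k + 1) + e_1(h)$. Applying the hockey-stick identity to each of the three resulting sums over $0 \leq h \leq d$ and extracting the coefficient of $d^{n + 1}/(n + 1)!$ in the polynomial $HF(\mcR(P); d)$, one obtains after cancellation the constant
\[
(2^{n + 1} - 1) - n(2n + 1) + (n - 1)(n + 2) = 2^{n + 1} - n^2 - 3.
\]
Multiplying by $(n + 1)! = (\dim \mcR(P) - 1)!$ yields the claimed multiplicity $2^{c + 2} - (c + 1)^2 - 3$.

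The main obstacle is the combinatorial bookkeeping: three competing $d^{n + 1}$ contributions need to be computed via hockey-stick identities and combined into the clean closed form. Unlike the derivation in Proposition \ref{PropositionInvariantsFiberD0}, no boundary correction is needed here, because $d + h \geq 2h = \mathrm{reg}(P^h)$ holds automatically throughout the summation range; this streamlines the computation considerably and keeps the argument purely asymptotic.
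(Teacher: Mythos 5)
Your proposal is correct and follows essentially the same route as the paper's proof: reduce via Lemma \ref{LemmaConstantHilbertFunctionBlowups} to the rational normal curve, collapse the bigrading to a single grading so that $HF(\mcR(P);d)=\sum_{h=0}^{d}HF(P^h;d+h)$, substitute Conca's formulas for $e_0(h)$ and $e_1(h)$, and extract the leading coefficient via hockey-stick identities, arriving at the same combination $(2^{n+1}-1)-n(2n+1)+(n^2+n-2)=2^{n+1}-n^2-3$. The only cosmetic difference is that you justify replacing Hilbert functions by Hilbert polynomials through the linearity of the resolutions of $P^h$, while the paper uses Conca's exact expression for $HF(R/P^h;h+k)$ directly.
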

\begin{proof}
As in Proposition \ref{PropositionInvariantsFiberD1},
the formula follows from \cite[Corollary 1.7]{Hoang} and   Lemma \ref{LemmaConstantHilbertFunctionBlowups},
since $\mcH_{c,1}$ contains exactly one ideal of KW type $(c+1;\emptyset)$, which defines a rational normal curve.
\end{proof}


\begin{thebibliography}{1}

\bibitem[AEKP16]{AEKP}
H. Abo, D. Eklund, T. Kahle, C. Peterson, 
\href{https://doi.org/10.1016/j.laa.2015.12.030}
{\emph{Eigenschemes and the Jordan canonical form}},
 Linear Alg. Appl. {\bf 496} (2016), 121--151.

\bibitem[ALL20]{ALL}
A. Almousa, K. N. Lin, W. Liske,
\href{https://arxiv.org/abs/2008.10950}
{\emph{Rees Algebras of Closed Determinantal Facet Ideals}},
 arXiv:2008.10950 (2020).

\bibitem[BC21]{BignaletCazalet}
R. Bignalet-Cazalet, 
\href{https://arxiv.org/abs/2105.03872}
{\emph{Gluing determinantal Cremona maps}},
 arXiv:2105.03872 (2021).

\bibitem[BC17]{BrunsConca}
W. Bruns, A. Conca,
\href{https://link.springer.com/chapter/10.1007/978-3-319-28829-1_3}
{\emph{Linear resolutions of powers and products}},
In: Decker, W., Pfister, G., Schulze, M. (eds) Singularities and Computer Algebra. Springer, Cham.
(2017).

\bibitem[BCV15]{BrunsConcaVarbaro}
W. Bruns, A. Conca, M. Varbaro,
\href{https://doi.org/10.1515/crelle-2013-0026}
{\emph{Maximal minors and linear powers}},
J. Reine Angew. Math.
{\bf 702} (2015), 41--53.

\bibitem[BV06]{BrunsVetter}
W. Bruns, U. Vetter,
\href{https://link.springer.com/book/10.1007/BFb0080378}
{\emph{Determinantal rings}}, 
Lecture Notes in Mathematics {\bf 1327}, Springer (2006).

\bibitem[C97]{CatalanoJohnson}
M. L. Catalano-Johnson,
\href{https://doi.org/10.1006/jabr.1997.6801}
{\emph{The Resolution of the Ideal of $2\times  2$ Minors of a $2\times  n$ Matrix of Linear Forms}},
J. Alg. {\bf 187} (1997), 39--48.

\bibitem[C09]{Caviglia}
G. Caviglia, 
\href{https://doi.org/10.1007/s10801-009-0176-1}{\emph{The pinched veronese is Koszul}},
J. Alg. Comb. {\bf 30} (2009), 539--548.

\bibitem[CDFGLPS21]{CDFGLPS}
E. Celikbas, E. Dufresne, L. Fouli, E. Gorla, K. N. Lin, C. Polini, I. Swanson, 
\href{https://arxiv.org/abs/2101.03222}
{\emph{Rees algebras of sparse determinantal ideals}},
 arXiv:2101.03222.
(2021)

\bibitem[C00]{Conca}
A. Conca, 
\href{https://www.sciencedirect.com/science/article/pii/S0022404999001462}
{\emph{Hilbert function and resolution of the powers of the ideal of the rational normal curve}}, 
J. Pure Appl. Alg. {\bf  152} (2000), 65--74.

\bibitem[CDR13]{CDR}
A. Conca, E. De Negri, M. E. Rossi,
\href{https://link.springer.com/chapter/10.1007/978-1-4614-5292-8_8}
{\emph{Koszul algebras and regularity}}, 
in Commutative algebra,   Springer (2013), 285–-315.


\bibitem[CHV96]{CHV}
A. Conca,  J. Herzog, G. Valla, 
\href{https://doi.org/10.1515/crll.1996.474.113}
{\emph{Sagbi bases with applications to blow-up algebras}},
J. Reine Angew. Math.
{\bf 474}
 (1996), 113--138.

\bibitem[CV20]{ConcaVarbaro}
A. Conca, M. Varbaro, 
\href{https://link.springer.com/article/10.1007/s00222-020-00958-7}
{\emph{Square-free Gr\"obner degenerations}}, 
Invent. Math. {\bf 221} (2020), 713--730.

\bibitem[CP22]{CooperPrice}
M. Cooper, E. F. Price,
\href{https://doi.org/10.1016/j.jalgebra.2022.05.012}
{\emph{Bounding the degrees of the defining equations of Rees rings for certain determinantal and Pfaffian ideals}},
J. Alg. 
{\bf  606} (2022), 613--653.


\bibitem[C90]{Chun}
H. Chun, 
\href{https://www.proquest.com/openview/f87d44ca825d978c2bdd4a27a7c4b180/1?cbl=18750&diss=y&pq-origsite=gscholar&parentSessionId=d3ujPAwVkxdFAQ588JVtzbDciLMNhzBex%2BxoHg2SG%2Bw%3D}
{\emph{Hilbert Series for Graded Quotient Rings of 2-forms}},
 Doctoral dissertation, University of Michigan (1990).

\bibitem[DDM21]{DDM}
H. Dao, A. De Stefani, L. Ma,
\href{https://doi.org/10.1093/imrn/rnz203}
{\emph{Cohomologically full rings}}, 
Int. Math. Res. Not. {\bf 17} (2021), 
 13508--13545.

\bibitem[D99]{DeNegri} 
E. De Negri, 
\href{ https://doi.org/10.1002/mana.1999.3212030103}
{\emph{Toric rings generated by special stable sets of monomials}}, 
Math. Nachr. {\bf 203} (1999), 31-–45.

\bibitem[DMN21]{DMN}
A. De Stefani, J. Montaño, L. Núñez-Betancourt,
\href{https://arxiv.org/abs/2109.00592}
{\emph{Blowup algebras of determinantal ideals in prime characteristic}},
 arXiv:2109.00592 (2021).



\bibitem[E95]{Eisenbud}
D.  Eisenbud,
\href{https://www.springer.com/gp/book/9780387942681}
{\emph{Commutative Algebra: with a view toward algebraic geometry}}, 
Vol. 150. Springer Science \& Business Media (1995).

\bibitem[EGHP06]{EGHP}
D. Eisenbud, M. Green, K. Hulek, S. Popescu, 
\href{https://www.jstor.org/stable/40068037}
{\emph{Small schemes and varieties of minimal degree}},
 Amer. J. Math. {\bf 128} (2006), 1363--1389.


\bibitem[EH87]{EisenbudHarris}
D. Eisenbud, J. Harris, 
\href{https://www.msri.org/m/people/members/de/papers/pdfs/1987-001.pdf}
{\emph{On varieties of minimal degree}},
 Proc. Sympos. Pure Math {\bf 46} (1987), 3--13.

\bibitem[EHU20]{EisenbudHunekeUlrich} 
D. Eisenbud, C. Huneke, B. Ulrich, 
\href{https://arxiv.org/abs/2001.05089}
{\emph{Residual intersections and linear powers}}, 
to appear on Trans. Amer. Math. Soc.,  arXiv:2001.05089 (2020).

\bibitem[EU22]{EisenbudUlrich}
D. Eisenbud, B. Ulrich,
\href{https://arxiv.org/abs/2207.01143}
{\emph{Residual intersections of $2\times n $ determinantal ideals}},
arXiv:2207.01143 (2022).




\bibitem[M2]{M2}
D. R. Grayson, M. E. Stillman,
\href{http://www.math.uiuc.edu/Macaulay2/}{\textit{Macaulay 2, a software system for research in algebraic geometry}}.


\bibitem[H81]{Harris}
J. Harris,
\href{http://archive.numdam.org/article/ASNSP_1981_4_8_1_35_0.pdf}
{\emph{A bound on the geometric genus of projective varieties}},
 Ann. Scuola Norm. Sup. Pisa Cl. Sci.
{\bf 8} (1981), 35--68.

\bibitem[HH11]{HerzogHibi}
J. Herzog, T. Hibi,
\href{https://link.springer.com/book/10.1007/978-0-85729-106-6}
{\emph{Monomial ideals}},
Springer, London (2011). 

\bibitem[HHV05]{HHV}
J. Herzog, T. Hibi, M.  Vladoiu, 
\href{https://projecteuclid.org/journalArticle/Download?urlid=ojm%2F1153494553}
{\emph{Ideals of fiber type and polymatroids}}, 
Osaka J. Math. {\bf 42} (2005),  807-–829. 

\bibitem[H06]{Hoang}
N. D. Hoang, 
\href{http://journals.math.ac.vn/acta/pdf/0601061.pdf}
{\emph{Mixed multiplicities of ideals and of Rees algebras associated with rational normal curves}},
Acta Math. Vietnam. {\bf 31} (2006), 61--75.

\bibitem[KM23]{KleppeMiroRoig}
J. O. Kleppe, R. M. Miró-Roig,
\href{https://www.ams.org/books/memo/1418/}
{\emph{Deformation and unobstructedness of determinantal schemes}},
Mem. Amer. Math. Soc. {\bf 286} (2023),  no. 1418.

\bibitem[L14]{Lan}
N. P. Lan, 
\href{https://doi.org/10.1016/j.jalgebra.2014.08.019}
{\emph{On Rees algebras of linearly presented ideals}},
 J. Alg.  {\bf 420} (2014), 186--200.



\bibitem[LS22a]{LinShen}
K. N. Lin, Y. H.  Shen,
\href{https://link.springer.com/article/10.1007/s10801-022-01123-4}
{\emph{Fiber cones of rational normal scrolls are Cohen–Macaulay}}, 
J. Alg. Combin.
{\bf 56} (2022), 547-–563.


\bibitem[LS22b]{LinShen2}
K. N. Lin, Y. H.  Shen,
\href{https://link.springer.com/article/10.1007/s13348-021-00345-2}
{\emph{Blow-up algebras of secant varieties of rational normal scrolls}}, 
Collect. Math. (2022), 1--32.




\bibitem[PPS10]{PPS}
T. K. Petersen, P. Pylyavskyy, D. E. Speyer, 
\href{https://doi.org/10.1016/j.jalgebra.2010.05.001}
{\emph{A non-crossing standard monomial theory}},
J. Alg. {\bf 324} (2010), 951--969.

\bibitem[P21]{Polini}
C. Polini,
\href{https://link.springer.com/chapter/10.1007/978-3-030-65064-3_1}
{\emph{Defining Equations of Blowup Algebras}},
in: 
Recent Developments in Commutative Algebra, 
Lecture Notes in Mathematics {\bf 2283}, Springer (2021).

\bibitem[RS01]{RussoSimis}
F. Russo, A. Simis,
\href{https://doi.org/10.1023/A:1017572213947}
{\emph{On birational maps and Jacobian matrices}},
Compos. Math. {\bf 126} (2001), 335--358.

\bibitem[S20]{Sammartano}
A. Sammartano, 
\href{https://doi.org/10.1090/tran/7689}
{\emph{Blowup algebras of rational normal scrolls}},
 Trans. Amer. Math. Soc.  
 {\bf 373} (2020), 797--818.

\bibitem[SSW17]{SSW}
F. Santos, C. Stump, V. Welker,
\href{https://doi.org/10.1017/fms.2017.1}
{\emph{Noncrossing sets and a Grassmann associahedron}},
Forum Math. Sigma {\bf 5} (2017).


\end{thebibliography}
\end{document}